\def\th@plain{%
	\upshape 
}
\renewenvironment{proof}[1][\proofname]{\par
	\pushQED{\qed}%
	\normalfont \topsep6\p@\@plus6\p@\relax
	\trivlist
	\item[\hskip\labelsep
	\bfseries
	#1\@addpunct{.}]\ignorespaces
}{%
\popQED\endtrivlist\@endpefalse
}
\newtheorem{theorem}{Theorem}
\newtheorem{lem}{Lemma}
\theoremstyle{definition}
\newtheorem{definition}{Definition}
\def\int{\mathrm{int}}
\def \int {\rm int}
\begin{document}
\sloppy
	\title{Truncated degree DP-colourability of $K_{2,4}$-minor free  graphs   }
	\author{On-Hei Solomon Lo\thanks{School of Mathematical Sciences, Tongji University, China. E-mail: ohsolomon.lo@gmail.com}\and Cheng Wang\thanks{School of Mathematical Sciences, Zhejiang Normal University, China. Email: wc635298770@163.com} \and Huan Zhou\thanks{School of Mathematical Sciences, Zhejiang Normal University, China. Email: huanzhou@zjnu.edu.cn} \and  Xuding Zhu\thanks{School of Mathematical Sciences, Zhejiang Normal University,  China.  E-mail: xdzhu@zjnu.edu.cn. This research was supported by the National Natural Science Foundation of China, Grant numbers: NSFC 12371359, U20A2068.} }
	\maketitle
	
	\begin{abstract}
		Assume $G$ is a graph and $k$ is a positive integer. Let $f: V(G) \to \mathbb{N}$ be defined as $f(v)=\min\{k, d_G(v)\}$. If $G$ is $f$-DP-colourable (respectively, $f$-choosable), then we say $G$ is  $k$-truncated degree DP-colourable (respectively, $k$-truncated degree-choosable). Hutchinson [On list-colouring outerplanar graphs. J. Graph Theory, 59(1):59–74, 2008] proved that 2-connected maximal outerplanar graphs other than the triangle are $5$-truncated degree-choosable.
	Hutchinson asked whether the result can be extended to all outerplanar graphs, and the question remained open. 
		This paper  proves that 2-connected $K_{2,4}$-minor free graphs other than cycles and complete graphs are DP-$5$-truncated-degree-colourable. This not only answers Hutchinson's question in the affirmative, but also extends to a larger family of graphs, and strengthens choosability to DP-colourability. 
	\end{abstract}
 
	{\small \noindent {\bf Keywords.} Degree-choosable, DP-colouring, Truncated degree DP-colourable, $K_{2,4}$-minor free graph }
	
\section{Introduction}	

Assume $G$ is a graph and $f: V(G) \to \mathbb{N}$. An {\em $f$-list assignment} of $G$ is a mapping $L$ with $|L(v)|=f(v)$ for each vertex $v$. An {\em $L$-colouring} of $G$ is a mapping $\phi$ which assigns each vertex a colour    $\phi(v)\in L(v)$ such that $\phi(u) \ne \phi(v)$ for each edge $uv$. A graph $G$ is {\em$f$-choosable} if $G$ has an $L$-colouring for every $f$-list assignment $L$.  We say $G$ is $k$-choosable if $G$ is $f$-choosable with $f(v)=k$ for every $v\in V(G)$. 
The {\em choice number} $ch(G)$ of $G$ is the least integer $k$ such that $G$ is $k$-choosable. List colouring of graphs was introduced independently by Erd\H{o}s-Rubin-Taylor \cite{ERT} and Vizing \cite{Vizing} in the 1970s and has since been extensively studied in the literature.

For a vertex $v$ of a graph $G$, denote by $d_G(v)$ the degree of $v$. A graph $G$ is called {\em degree-choosable} if $G$ is $f$-choosable with $f(v)=d_G(v)$ for every $v \in V(G)$. Degree-choosable graphs have been investigated in many papers \cite{Borodin1,Borodin2,ERT,Vizing,Thomassen}. It is known that a connected graph $G$ is not degree-choosable if and only if $G$ is a {\em Gallai-tree}, i.e., each block of $G$ is either a complete graph or an odd cycle.

In 2008, Hutchinson \cite{Hutchinson}  studied a notion that combines the concepts of degree-choosable and $k$-choosable, which is called {$k$-truncated degree-choosable} in \cite{ZZZ}.

\begin{definition}
	A graph $G$ is {\em $k$-truncated degree-choosable} if it is $f$-choosable, where $f$ is defined as $f(v)=\min\{k, d_G(v)\}$ for $v \in V(G)$.
\end{definition}

Hutchinson's work was motivated by a question raised by Bruce Richter, asking whether every $3$-connected non-complete planar graph is $6$-truncated degree-choosable.
Hutchinson~\cite{Hutchinson} established the following result concerning outerplanar graphs.

\begin{theorem}[\cite{Hutchinson}]
	\label{thm-outerplanar}
	Every 2-connected maximal outerplanar graph other than $K_3$ is $5$-truncated degree-choosable.
\end{theorem}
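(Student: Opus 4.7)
The plan is to proceed by induction on $n = |V(G)|$. For the base case $n = 4$, $G$ is $K_4$ minus an edge, with two degree-$2$ and two degree-$3$ vertices, and $f$-choosability is a direct check. For the inductive step, assume $n \geq 5$ and recall the classical structural fact that the weak dual of a $2$-connected maximal outerplanar graph is a tree with at least two leaves, each corresponding to an \emph{ear} triangle $uvw$ with $d_G(v) = 2$ and $uw \in E(G)$. The natural reduction is to remove an ear tip $v$ and apply induction to $G' := G - v$, which remains $2$-connected and maximal outerplanar since $n - 1 \geq 4$.

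The obstacle at the extension step is that $|L(v)| = 2$, so a colouring $\phi$ of $G'$ extends to $v$ exactly when $L(v) \not\subseteq \{\phi(u), \phi(w)\}$. To force this, we trim the lists of $u$ and $w$ before applying induction, so that some specified colour $a \in L(v)$ is forbidden at both. The constraint is that $|L'(u)|$ must remain at least $f_{G'}(u) = \min\{5, d_G(u) - 1\}$, and since $|L(u)| = \min\{5, d_G(u)\}$, removing a single colour from $L(u)$ is feasible precisely when $d_G(u) \leq 5$. The easy case is thus when both $d_G(u), d_G(w) \leq 5$: pick any $a \in L(v)$, set $L'(u) = L(u) \setminus \{a\}$ and $L'(w) = L(w) \setminus \{a\}$, leave all other lists unchanged, apply the inductive hypothesis to $G'$, and extend by $\phi(v) = a$.

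The main difficulty is the remaining case in which at least one of $u,w$, say $u$, has $d_G(u) \geq 6$, so that $|L(u)| = 5 = f_{G'}(u)$ and $L(u)$ cannot be shrunk. The first resort is to choose a different ear: since $G$ has at least two ears, there is some flexibility, and one may hope to find an ear whose base edge has both endpoints of degree at most $5$. If no such ear exists, i.e., every ear of $G$ is flanked by a high-degree vertex, one must exploit the local structure around a high-degree vertex. Since $d_G(u) \geq 6$, $u$ is the apex of a fan on consecutive outer vertices, which should give room for a deeper reduction (removing more than a single ear tip) or for a strengthened inductive hypothesis permitting pre-specification of a colour at a chosen outer vertex. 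Verifying that a usable reducible configuration always exists in this subcase is, we expect, the technically hardest part, and is also where the precise threshold $5$ in the truncated-degree function plays its role.
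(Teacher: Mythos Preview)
This theorem is cited from Hutchinson and is not given a direct proof in the paper; rather, it is subsumed by the paper's main result (Theorem~\ref{thm-main}), whose proof is organised entirely differently. For outerplanar graphs the paper views $G$ as a broken $x$-$y$-outerplanar graph $H$ plus the edge $xy$, applies the Key Lemma (Lemma~\ref{key-lemma}) to obtain a \emph{coding} $\bar M_{xy}\subseteq K_{2,2}$ of the restricted cover on $H$, and then observes that $\bar M_{xy}\cup M_{xy}$ cannot be complete bipartite, so some pair $(a,b)\in L(x)\times L(y)$ survives and extends through $H$. The induction in the paper is thus on the \emph{outer path} of $H$ inside the Key Lemma, tracking how the cover degrades via the parameters $\lambda_{M_e}(\cdot)$, and never needs to single out an ear tip.

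Your proposal, by contrast, is an ear-removal induction, and it has a genuine gap at exactly the place you flag. When $d_G(u)\ge 6$ you have $|L(u)|=5=f_{G'}(u)$, so you cannot delete any colour from $L(u)$; your fallback ``pick $a\in L(v)\setminus L(u)$'' can fail since $|L(v)|=2\le 5=|L(u)|$. The suggestion to look for a different ear does not help in general: it is easy to build maximal outerplanar graphs in which \emph{every} ear is incident with a vertex of degree $\ge 6$ (take a long fan over a single apex and attach a second fan at the far end). Your final suggestion of a ``deeper reduction'' around a high-degree fan apex is in fact the heart of Hutchinson's argument, but you have not supplied it; one needs a strengthened inductive hypothesis (allowing a specified colour to be forbidden at one designated boundary vertex, or equivalently a precolouring of one outer vertex) to push through the fan. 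As written, the proposal is a correct outline of the easy case together with an honest admission that the essential case is missing.
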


 Theorem~\ref{thm-outerplanar} is tight, as Kostochka~\cite{Hutchinson} constructed a 2-connected maximal outerplanar graph that is not a triangle and not $4$-truncated degree-choosable.    
On the other hand, Hutchinson~\cite{Hutchinson} proved that 2-connected  bipartite outerplanar graphs are 4-truncated degree-choosable. This result is also tight, as there are 2-connected bipartite outerplanar  graphs that are not 3-truncated degree-choosable. Hutchinson asked whether the result can be extended to all outerplanar graphs, i.e., whether every 2-connected outerplanar graph other than odd cycles are $5$-truncated degree-choosable. This question remained open. 

DP-colouring (also known as correspondence colouring) is a generalization of list colouring  introduced by Dvo\v{r}\'{a}k and Postle~\cite{DP}.

\begin{definition}
	\label{def-cover}
	A {\em  cover} of a multigraph $G $ is an ordered pair $(L,M)$, where $L = \{L(v): v \in V(G)\}$ is a family of pairwise disjoint sets, and $M=\{M_e: e \in E(G)\}$ is a family of bipartite graphs such that for each edge $e=uv$, $M_e$ is a bipartite graph with partite sets $L(u)$ and $L(v)$.  A cover $(L,M)$ of $G$ is {\em simple} if $\Delta(M_e) \le 1$ for each edge $e$, i.e., $M_e$ is a (possibly empty) matching for each edge $e$. For a  function $f: V(G) \to \mathbb{N}$, we say $(L,M)$ is an $f$-cover of $G$ if $|L(v)| \ge f(v)$ for each vertex $v \in V(G)$. 
\end{definition}
The definition above is slightly different from that used in the literature, where a cover refers to a simple cover.  For our purpose, it is more convenient to allow $M_e$ to be a bipartite graph that is not a matching. Specific restrictions on the bipartite graphs will be given later.

For any subgraph $H$ of $G$, denote by $(L,M)|_H$ the restriction of $(L,M)$ to $H$, i.e., $(L,M)|_H$ is the cover $(L|_H, M|_H)$ of $H$ with $L|_H = \{L(v) \in L : v\in V(H)\}$ and $M|_H = \{M_e \in M : e \in E(H)\}$.

For convenience, we view each $M_e$ as a set of edges, and view  $(L,M)$ as the graph with vertex set $\bigcup_{v \in V(G)}L(v)$ and edge set $\bigcup_{e \in E(G)}M_e$. For a vertex $a \in \bigcup_{v \in V(G)}L(v)$, we denote by $N_{(L,M)}(a)$ be the set of neighbours of $a$ in $(L,M)$. We may write $N_M(a)$ instead of $N_{(L,M)}(a)$ if it is clear from the context.

Assume $(L,M)$ is a cover of a graph $G$. In this context, both $G$ and $(L,M)$ are considered graphs. To highlight their distinct roles, we will refer to the vertices of $(L,M)$ as {\em nodes} and the edges of $(L,M)$ {\em links}.

\begin{definition}
	\label{def-colouring}
	Given a cover $(L,M)$ of a graph $G$, an $(L,M)$-colouring of $G$ is a mapping $\phi: V(G) \to \bigcup_{v \in V(G)}L(v)$ such that for each vertex $v \in V(G)$, $\phi(v) \in L(v)$, and for each edge $e=uv \in E(G)$, $\phi(u)\phi(v) \notin E(M_e)$. We say $G$ is {\em $(L, M)$-colourable} if it has an $(L,M)$-colouring.
\end{definition}

\begin{definition}
	\label{DP-colouring}
	Assume $G$ is a graph and  $f: V(G) \to \mathbb{N}$.  We say $G$ is {\em $f$-DP-colourable} if for every simple $f$-cover $(L,M)$, $G$ has an $(L,M)$-colouring. 
\end{definition}

It is well-known that if $G$ is $f$-DP-colourable, then it is $f$-choosable. 

\begin{definition}
	\label{degreeDP}
	We say
	$G$ is {\em degree DP-colourable} if $G$ is $f$-DP-colourable with $f(v)=d_G(v)$ for $v \in V(G)$. For a positive integer $k$,   we say   $G$ is {\em $k$-truncated degree DP-colourable} if $G$ is $f$-DP-colourable with $f(v)=\min\{k, d_G(v)\}$ for $v \in V(G)$.
\end{definition}

It follows from the above definitions that degree DP-colourable graphs are degree-choosable, and $k$-truncated degree DP-colourable graphs are  $k$-truncated degree-choosable. The converse is not true. 
For example, even cycles are degree-choosable but not degree DP-colourable. 

The following theorem, providing a characterization of degree DP-colourable graphs, combines a result of Bernshteyn, Kostochka, and Pron~\cite{BKP} and (a weaker version of) a result of Kim and Ozeki~\cite{KO2019}.

\begin{theorem}[\cite{BKP, KO2019}]
	\label{thm-DPdegree}
	A connected multigraph $G$ is not degree DP-colourable if and only if each block of $G$ is one of the graphs $K_n^k$ or $C_n^k$ for some $n$ and $k$, where $K_n^k$ (respectively, $C_n^k$) is the graph obtained from the complete graph $K_n$ (respectively, the cycle $C_n$) by replacing each edge with a set of $k$ parallel edges. Moreover, if $G$ has a simple $f$-cover $(L, M)$ with $f(v) = d_G(v)$ for all $v \in V(G)$ such that $G$ is not $(L,M)$-colourable, then $M_e$ is a perfect matching for each edge $e$ of $G$.
\end{theorem}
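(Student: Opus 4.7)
The plan is to decompose the theorem into three claims and prove them separately: (i) every multigraph whose blocks are all $K_n^k$ or $C_n^k$ admits an obstructing simple degree cover; (ii) a minimal connected counterexample $G$ to degree DP-colourability must be 2-connected; and (iii) a 2-connected multigraph with an obstructing simple degree cover is either $K_n^k$ or $C_n^k$, and moreover every $M_e$ in such a cover is a perfect matching.

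For (i) the approach is to build explicit bad covers. On $K_n^k$ with $V=\{v_1,\dots,v_n\}$, identify $L(v_i)$ with $[n-1]\times[k]$, thought of as $n-1$ groups of $k$ colours, one per neighbour of $v_i$. For the $s$-th parallel edge between $v_i$ and $v_j$, let $M_e$ be the perfect matching pairing the $s$-slot of the $v_j$-group at $v_i$ with the $s$-slot of the $v_i$-group at $v_j$ (and any perfect matching on the remaining pairs); any $(L,M)$-colouring then encodes a proper $(n-1)$-colouring of $K_n$, which does not exist. On $C_n^k$ one does the analogous rotational construction, choosing the composition of matchings around the cycle to be a non-identity permutation of the common fibre $[2k]$. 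To glue blocks, observe that at a cut vertex $v$ the block covers can be arranged to agree on $L(v)$, and a colouring of $G$ restricts to a colouring on each block, so the block-by-block obstructions combine into an obstruction on $G$.

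For (ii) and (iii) I would induct on $|V(G)|+|E(G)|$ with a minimum bad pair $(G,(L,M))$, where $G$ is connected and $(L,M)$ is a simple degree cover admitting no $(L,M)$-colouring. For the block reduction, if $v$ is a cut vertex and $B$ an end-block containing $v$, minimality gives an $(L,M)$-colouring of $G-(V(B)\setminus\{v\})$, which fixes a colour $\phi(v)$; the induced residual cover on $B$ then has no colouring, and by induction $B$ is $K_n^k$ or $C_n^k$. Iterating over all end-blocks yields the block structure. For the moreover claim in the 2-connected case, suppose $M_e$ is not perfect for some $e=uv$, so some $a\in L(u)$ is $M_e$-unmatched; delete $e$ and $a$ to reach a smaller instance on $G-e$ in which $v$ enjoys degree slack, apply minimality to obtain a colouring, and extend it over $e$ by using the slack at $v$ to avoid the unique forbidden link, producing a colouring of $G$ and a contradiction. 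Hence every $M_e$ is perfect, which forces $G$ to be regular.

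The main obstacle is the final classification in the 2-connected perfect-matching case, namely deducing $G\in\{K_n^k,C_n^k\}$. I would fix a base vertex $v_0$ and consider the permutation group of $L(v_0)$ generated by compositions of matchings around cycles through $v_0$; non-colourability forces strong rigidity of this group, and combined with 2-connectedness should leave only the two claimed structures. The delicate step is handling the gauge freedom in the cover, which I would address by normalising the matchings along a spanning tree of $G$ to reduce to a "signed" cover, then studying the non-tree matchings directly and showing that their compatibility with non-colourability forces either a single cycle structure (giving $C_n^k$) or enough additional chords to force $K_n^k$.
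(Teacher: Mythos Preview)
This theorem is not proved in the paper; it is quoted from \cite{BKP,KO2019} and used as a black box, so there is no proof in the paper to compare against. Evaluating your outline on its own merits:

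Parts (i) and (ii) are standard and essentially correct. In (ii), however, ``minimality gives an $(L,M)$-colouring of $G-(V(B)\setminus\{v\})$'' is not literally what minimality of a counterexample yields: the smaller graph merely satisfies the theorem, and it could itself be non-degree-DP-colourable. The clean reduction is the usual one: colour $G\setminus V(B)$ greedily along a spanning forest rooted at the cut vertices of $B$, leaving on each cut vertex $w$ at least $d_B(w)$ colours, and then invoke the 2-connected case on $B$.

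Your argument for the ``moreover'' clause has a genuine gap. After deleting the edge $e=uv$ and the unmatched node $a\in L(u)$, the slack at $v$ in $G-e$ is exactly one. When you then try to ``extend over $e$'', the already-chosen colour $\phi(u)$ lies in $L(u)\setminus\{a\}$ and may well be $M_e$-matched to a node of $L(v)$; this costs one more forbidden colour at $v$, which is exactly the slack you had, so the extension can fail. The correct move is the opposite of deleting $a$: set $\phi(u)=a$ first. Then $v$ loses nothing through $M_e$, so $|L'(v)|\ge d_{G-u}(v)+1$, and a greedy colouring of $G-u$ along a spanning tree rooted at $v$ finishes. (Note also that a perfect matching between $L(u)$ and $L(v)$ presupposes $|L(u)|=|L(v)|$, so the clause is effectively about the regular case where $G$ is a single block $K_n^k$ or $C_n^k$; the paper only invokes it in that situation.)

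Part (iii), the classification of 2-connected non-degree-DP-colourable multigraphs as $K_n^k$ or $C_n^k$, is the real content of \cite{BKP}, and what you have is a heuristic, not a proof. Gauge-normalising along a spanning tree and studying the residual permutations on the non-tree edges is a reasonable starting point, but you have not explained how non-colourability constrains those permutations, nor how any such constraint forces the underlying simple graph to be $K_n$ or $C_n$ with uniform edge multiplicity. That step requires a separate structural argument (in \cite{BKP} it goes through an analysis of ``DP-degree-colourable'' configurations and how they interact with 2-connectivity), and your sketch does not supply one.
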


A {\em GDP-tree} $G$ is a simple graph whose blocks are either a complete graph or a cycle. It follows from Theorem \ref{thm-DPdegree} that a connected simple graph $G$ is not degree DP-colourable if and only if it is a GDP-tree.

In \cite{ZZZ}, $k$-truncated-degree-choosability and $k$-truncated degree DP-colourability of general graphs were studied.  The following two theorems were proved in \cite{ZZZ} that in particular answers Richter's question  in negative.

\begin{theorem}[\cite{ZZZ}] 
	\label{thm-planar}
 Every  3-connected non-complete planar graph is DP-$16$-truncated-degree-colourable, and there exists a 3-connected non-complete planar graph which is not 7-truncated degree-choosable. 
 \end{theorem}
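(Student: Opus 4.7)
The theorem splits into an upper bound (every 3-connected non-complete planar graph is DP-$16$-truncated-degree-colourable) and a lower bound (an explicit non-colourable example at list size $7$). I would attack the two parts by different techniques.

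For the upper bound, the plan is a minimum-counterexample / reducible-configuration argument using Theorem~\ref{thm-DPdegree}. Let $G$ be a minimum 3-connected non-complete planar graph that fails to be $(L,M)$-colourable for some simple $f$-cover with $f(v)=\min\{16,d_G(v)\}$. The goal is to find a connected subgraph $H\subseteq G$ such that (a) every $v\in V(H)$ has $d_G(v)\le 16$, and after pre-colouring $G-V(H)$ the residual list at each $v\in V(H)$ has size at least $d_H(v)$; (b) with its residual cover, $H$ is \emph{not} a GDP-tree; and (c) $G-V(H)$ remains 3-connected and non-complete, so induction applies there. The execution order is: first, a structural lemma producing such an $H$ from classical results on 3-connected planar graphs -- Euler's formula (a vertex of degree $\le 5$), Kotzig's theorem (a light edge with $d(u)+d(v)\le 13$), and a Lebesgue-type face classification (a face of bounded weight). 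Second, a short verification that $G-V(H)$ stays 3-connected and non-complete, handling a finite list of small exceptions directly. Finally, apply Theorem~\ref{thm-DPdegree} to colour $H$ and combine.

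The main obstacle is step (b). A single low-degree vertex will never suffice, because the one-vertex graph is complete; and a cycle composed entirely of low-degree vertices is dangerous, since \emph{even} cycles are degree-choosable but not degree DP-colourable -- this is precisely the gap between the DP setting and the list-colouring setting here. Hence $H$ must be selected so that some block is neither a complete graph nor a cycle of any parity: for instance, a light face together with a chord, or a ``theta''-like subconfiguration arising from two adjacent light faces. The specific constant $16$ is tuned so that the discharging or light-subgraph argument always supplies such an $H$ regardless of the adversarial matching data in $M$.

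For the lower bound I would design a specific 3-connected non-complete planar graph $G$ together with an assignment $L$ satisfying $|L(v)|=\min\{7,d_G(v)\}$ that has no proper $L$-colouring. The natural template is to embed a small planar gadget of large choice number into a larger graph by attaching high-degree ``hub'' vertices whose size-$7$ lists are arranged so that every valid colour choice at a hub propagates a forbidden pattern onto the gadget. Achieving 3-connectedness and planarity simultaneously -- by triangulating around the gadget while deleting at least one edge to avoid completeness -- is the only delicate point; the combinatorial verification that no $L$-colouring exists is then a finite case analysis driven by the list size $7$ at the hubs.
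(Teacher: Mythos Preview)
This theorem is not proved in the present paper at all: it is quoted verbatim from the reference~\cite{ZZZ} as background, alongside Theorem~\ref{thm-ZZZ}. There is therefore no proof here to compare your proposal against.

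That said, your outline for the upper bound has a structural gap worth flagging. Your induction scheme requires that $G-V(H)$ remain 3-connected (and non-complete) after deleting the reducible configuration $H$. This is not something you can expect to arrange: deleting even a single vertex from a 3-connected planar graph typically drops the connectivity, and deleting a face or a theta-subgraph is worse. The standard way around this in discharging arguments is \emph{not} to recurse on $G-V(H)$ as a smaller instance of the same theorem, but rather to colour $G-V(H)$ first (by minimality of $G$ as a counterexample, using whatever weaker hypothesis survives) and then extend into $H$. But then the burden shifts entirely onto step~(b): you must guarantee that the residual cover on $H$ admits a colouring, and for DP-colouring this means $H$ with its residual lists must fail to be a GDP-tree \emph{for the specific matchings inherited from $M$}, not merely as an abstract graph. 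Your sketch acknowledges the even-cycle obstruction but does not explain how the light-subgraph lemma will always produce an $H$ with a block that is neither complete nor a cycle; this is exactly where the constant $16$ would have to be justified, and it is the heart of the argument rather than a tuning step.
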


\begin{theorem}[\cite{ZZZ}] 
	\label{thm-ZZZ}
	For any proper minor closed family ${\mathcal G} $ of graphs, there is a constant $k$ such that every $s$-connected graph in ${\mathcal G}$ that is not a GDP-tree is $k$-truncated degree DP-colourable, where  $s$ is the minimum integer such that   $K_{s,t} \notin {\mathcal G}$ for some integer $t$. 
\end{theorem}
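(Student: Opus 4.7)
The plan is to reduce $k$-truncated degree DP-colouring to degree DP-colouring via Theorem~\ref{thm-DPdegree}, using a structural bound on high-degree vertices in $s$-connected $K_{s,t}$-minor-free graphs. The key extremal input (invoked as a black box) is the classical fact that for every $s, t \geq 1$ there exists $N = N(s, t)$ such that every $s$-connected $K_{s,t}$-minor-free graph contains fewer than $t$ vertices of degree at least $N$; heuristically, in an $s$-connected graph a set of $t$ sufficiently heavy vertices can be linked via $s$ pairwise vertex-disjoint connecting sets (Menger's theorem), producing a $K_{s,t}$ minor. Given $\mathcal{G}$, take $s$ minimum and a corresponding $t$ with $K_{s,t} \notin \mathcal{G}$, set $N := N(s, t)$, and let $k$ be a sufficiently large constant in $s, t, N$ (for instance $k = N + t$); the case $s = 1$ is trivial since $K_{1,t}$-minor-free graphs have maximum degree less than $t$.

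Fix an $s$-connected $G \in \mathcal{G}$ that is not a GDP-tree and a simple $f$-cover $(L, M)$ with $f(v) = \min\{k, d_G(v)\}$. Let $T = \{v \in V(G) : d_G(v) \geq N\}$; by the structural input, $|T| \leq t - 1$. The first step is to select an $(L, M)|_T$-proper colouring $\phi$ of $T$: each vertex of $T$ has $k$ colours, at most $|T| - 1 \leq t - 2$ neighbours in $T$ are coloured earlier, and each forbids at most one colour by simplicity, so a greedy choice works. Let $H := G - T$ and define the restricted cover $(L', M')$ on $H$ by $L'(v) = L(v) \setminus \bigcup_{u \in T \cap N(v)} N_{M_{uv}}(\phi(u))$ and $M'_e = M_e$ for $e \in E(H)$. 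Simplicity of $(L, M)$ yields $|L'(v)| \geq d_G(v) - |N_G(v) \cap T| = d_H(v)$, so $(L', M')$ is a simple degree-cover of $H$. If every component of $H$ is not a GDP-tree, Theorem~\ref{thm-DPdegree} extends $\phi$ to a full $(L, M)$-colouring of $G$, completing the proof. (The boundary case $T = \emptyset$ is handled directly by Theorem~\ref{thm-DPdegree} applied to $G$, since then $f = d_G$ and $G$ is not a GDP-tree.)

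The main obstacle is handling GDP-tree components $C$ of $H$, where Theorem~\ref{thm-DPdegree} does not directly apply to $(L', M')|_C$. However, any strict slack $|L'(v_0)| > d_C(v_0)$ at a single $v_0 \in C$ suffices to colour $C$. The strategy is to upgrade the choice of $\phi$ to create such slack simultaneously in every GDP-tree component. The enabling observation: for $u \in T$ and $v \in N(u) \cap H$, the simple matching $M_{uv}$ matches at most $|L(v)| = d_G(v) < N$ of the $k$ nodes of $L(u)$, so at least $k - N$ nodes of $L(u)$ are unmatched in $M_{uv}$; choosing $\phi(u)$ from this set makes the restriction at $v$ vacuous, producing slack at $v$. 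Since $G$ is connected and $T \neq \emptyset$, each GDP-tree component $C$ of $H$ has an edge $u(C) v_0(C)$ to $T$, and we want $\phi(u(C))$ to be unmatched in $M_{u(C) v_0(C)}$. The hardest step is a Hall-type argument showing that $\phi$ can be selected to simultaneously satisfy $(L, M)|_T$-propriety and the unmatched condition at each $u(C)$; the $s$-connectivity is used to bound the number of GDP-tree components that a single $u \in T$ must cover, so that the budget of at least $k - N$ unmatched choices per vertex per matching suffices.
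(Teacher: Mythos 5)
Your proposal rests on a structural ``black box'' that is false: it is not true that for every $s,t$ there is an $N(s,t)$ such that every $s$-connected $K_{s,t}$-minor-free graph has fewer than $t$ vertices of degree at least $N$. Take $\mathcal{G}$ to be the class of outerplanar graphs, for which $s=2$ (every star $K_{1,t}$ is outerplanar, while $K_{2,3}$ is not). A triangulated polygon built as a chain of fans --- an outer cycle $v_1v_2\ldots v_n$ with chords making $v_1$ adjacent to $v_2,\ldots,v_{N+1}$, then $v_{N+1}$ adjacent to $v_{N+2},\ldots,v_{2N+1}$, and so on --- is $2$-connected, outerplanar (hence $K_{2,3}$- and $K_{2,4}$-minor-free), and has $\Theta(n/N)$ vertices of degree at least $N$. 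The same phenomenon occurs for $3$-connected planar triangulations with $s=t=3$. The Menger-type heuristic you invoke does not produce a $K_{s,t}$ minor from $t$ heavy vertices, because their large neighbourhoods can be pairwise ``local'' and disjoint, as in this example; high degree alone forces nothing. Since the set $T$ of high-degree vertices can be unbounded, the entire reduction --- colour $T$ greedily, then apply Theorem~\ref{thm-DPdegree} to $G-T$ --- collapses at the first step. (A secondary, now moot, concern: even with $|T|$ bounded, your Hall-type argument must control how many GDP-tree components a single $u\in T$ serves, and the sketch of why $s$-connectivity bounds this is not worked out.)

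For context, the paper does not prove Theorem~\ref{thm-ZZZ}; it is quoted from~\cite{ZZZ}, so there is no in-text proof to compare against. But the way this paper handles its own special case (Theorem~\ref{thm-main}, i.e.\ $s=2$, $t=4$) is telling: precisely because $2$-connected $K_{2,4}$-minor-free graphs can contain arbitrarily many vertices of degree exceeding any fixed threshold, the argument cannot isolate high-degree vertices. Instead it uses the structural decomposition of Theorem~\ref{thm-allk24minorfree} and an inductive ``coding'' lemma for two-terminal outerplanar pieces (Lemma~\ref{key-lemma}) that compresses each piece into a small bipartite constraint between its two terminals. Any correct proof of Theorem~\ref{thm-ZZZ} has to engage with the global structure of the minor-closed class in some such way rather than with a bounded exceptional vertex set.
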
 

Note that the condition that $G$ be $s$-connected is necessary.  For any integer $k$, the graph $G=K_{s-1, k^{s-1}} \in {\mathcal G}$, and $G$ is $(s-1)$-connected and not $k$-truncated degree-choosable (and hence not $k$-truncated degree DP-colourable).

It follows from Theorem \ref{thm-ZZZ} that
\begin{enumerate}
    \item For each surface $\Sigma$, there is a positive integer $k_{\Sigma}$ such that every $3$-connected non-complete graph $G$ embeddable on $\Sigma$ is DP-$k_{\Sigma}$-truncated-degree-colourable.
    \item For any graph $H$, there is a constant $k_H$ such that every $s$-connected $H$-minor free graph $G$ is DP-$k_H$-truncated-degree-colourable. Here $s$ is the minimum integer such that for some integer $t$, $K_{s,t}$ contains an $H$-minor.
\end{enumerate}

\begin{definition}
	\label{def-taust}
	Given positive integers $s,t$, let $\tau(s,t)$ be the smallest integer such that every $s$-connected $K_{s,t}$-minor free graph $G$ that is not a Gallai-tree is $\tau(s,t)$-truncated degree-choosable, and  $\tau_{DP}(s,t)$ be the smallest integer such that every $s$-connected $K_{s,t}$-minor free graph $G$ that is not a GDP-tree is $\tau_{DP}(s,t)$-truncated degree DP-colourable.
\end{definition}
The proof of Theorem \ref{thm-ZZZ} gives an   upper bound for $\tau_{DP}(s,t)$  (which is also an upper bound for  $\tau(s,t)$). But the proven upper bound is large. For $s=2$ and $t=3$, $2$-connected $K_{2,3}$-minor free   graphs that are not GDP-trees are precisely the class of $2$-connected outerplanar graphs other than cycles.  
In this paper, we consider   $K_{2,4}$-minor free graphs, and proves the following result.

\begin{theorem}
	\label{thm-main}
	Every 2-connected   $K_{2,4}$-minor free graph  other than  cycles  and complete graphs is 5-truncated degree DP-colourable. Hence $\tau(2,4)=\tau_{DP}(2,4)=5$.
\end{theorem}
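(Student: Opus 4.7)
The plan is to prove the upper bound $\tau_{DP}(2,4)\le 5$ by a minimum-counterexample argument; the matching lower bound is essentially free, since every 2-connected outerplanar graph is $K_{2,4}$-minor free and Kostochka's example mentioned after Theorem~\ref{thm-outerplanar} gives a 2-connected outerplanar graph other than $K_3$ that is not $4$-truncated degree-choosable, hence not $4$-truncated degree DP-colourable, yielding $\tau_{DP}(2,4)\ge \tau(2,4)\ge 5$. Accordingly, I would take a counterexample $G$ of minimum order equipped with a simple $f$-cover $(L,M)$, where $f(v)=\min\{5,d_G(v)\}$, such that $G$ admits no $(L,M)$-colouring, and derive a contradiction.

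The first step is to collect basic structural properties of $G$. A standard argument yields $\delta(G)\ge 3$: a degree-$2$ vertex can be suppressed into a virtual edge, producing a smaller 2-connected $K_{2,4}$-minor free graph; minimality gives a colouring of the reduction, which then extends to the suppressed vertex using its two available colours. Next, at any 2-cut $\{u,v\}$ of $G$ each side augmented by the virtual edge $uv$ is again 2-connected and $K_{2,4}$-minor free, and minimality applied to each piece, combined with a cover-splicing argument that glues partial $(L,M)$-colourings at $\{u,v\}$, forces $G$ to be either a cycle, a complete graph, or one of a short list of small graphs that can be dispatched by hand. Thus modulo finitely many exceptions, $G$ is essentially 3-connected.

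The second step invokes the structural classification of 3-connected $K_{2,4}$-minor free graphs, which states that such graphs are either one of finitely many small sporadic graphs or belong to a family of near-outerplanar graphs with at most two high-degree vertices attached to a long cycle of low-degree vertices. Using this, I would locate a reducible configuration on the cycle, for instance two consecutive degree-$3$ vertices, a short path of degree-$3$ or degree-$4$ vertices whose DP-covers can be coordinated, or a triangle of low-degree vertices. For each configuration I delete a small set $S$ of vertices, apply minimality to $G-S$, and extend the resulting colouring via Theorem~\ref{thm-DPdegree} applied to the cover restricted to $G[S]$; the paper's slightly generalised notion of cover, which permits non-matching $M_e$, is essential here for absorbing the identifications and doubled links that the reduction produces. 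The sporadic 3-connected exceptions are verified directly, typically via an explicit Galvin-style ordering or a kernel argument on a carefully chosen orientation.

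The main obstacle will be the structural case analysis. Unlike the outerplanar (i.e.\ $K_{2,3}$-minor free) setting, $K_{2,4}$-minor free graphs may contain vertices of arbitrarily high degree, and fan-like attachments around such vertices create overlapping local configurations whose reductions must simultaneously preserve 2-connectivity, $K_{2,4}$-minor freeness, and steer clear of the cycle/complete-graph exceptions. Guaranteeing that some reducible configuration is always present in a non-trivial 2-connected $K_{2,4}$-minor free graph, and dispatching the tight cases in which the degree-$5$ bound leaves essentially no slack (graphs close to $K_5$, wheels, prisms, and the near-outerplanar extremes), is where the real technical weight of the proof will lie.
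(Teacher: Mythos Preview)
Your high-level architecture --- reduce to a 3-connected core and then analyse that core via the explicit classification --- is the same as the paper's, but there is a genuine gap in how you propose to carry out the reduction. When you suppress a degree-$2$ vertex, or more generally collapse one side of a $2$-cut $\{u,v\}$ to a virtual edge, the constraint that side imposes on the pair $(\phi(u),\phi(v))$ is \emph{not} a matching. You acknowledge this, but you do not control its shape, and without such control the list sizes at $u$ and $v$ need not suffice after the reduction. The paper's central technical contribution is precisely this control: Lemma~\ref{key-lemma} proves, by an induction internal to the two-terminal outerplanar piece, that the induced constraint can always be taken to be a subgraph of $K_{2,2}$, and moreover that its contribution $\lambda_{\bar{M}_{xy}}(x)$ to the weighted degree at each terminal is bounded by the actual degree contribution of the piece. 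This $K_{2,2}$ bound is what keeps the condition $|L(v)|\ge\min\{5,\sum_e\lambda_{M_e}(v)\}$ intact through the reduction; your ``cover-splicing'' sketch does not supply it, and there is no obvious shortcut.

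There is also a mismatch in the induction scheme. Minimising $|V(G)|$ over $2$-connected $K_{2,4}$-minor free non-cycle non-complete graphs with a \emph{simple} cover does not mesh with the reductions you propose: after suppression the cover is no longer simple, and the smaller graph may land in the excluded class, so your hypothesis does not apply to it. The paper avoids this by not inducting on $G$ at all. It invokes Theorem~\ref{thm-allk24minorfree} once, uses Lemma~\ref{key-lemma} to replace every two-terminal outerplanar attachment by its $K_{2,2}$ coding in a single pass, and only then runs a minimum-counterexample argument --- on the $3$-connected core $G_0$ equipped with an $F$-valid (non-simple) cover, minimising $|F|$ and then $\sum_v|L(v)|$. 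Finally, your proposed extension step (``apply minimality to $G-S$, extend via Theorem~\ref{thm-DPdegree}'') does not work as written: $G-S$ need not be $2$-connected or avoid the excluded cases, and Theorem~\ref{thm-DPdegree} characterises when a whole graph is degree-DP-colourable, not when a partial colouring extends into $S$. The paper instead colours one or two carefully chosen vertices of $G_0$ first and then exhibits an explicit removable sequence (Lemmas~\ref{obs-1}--\ref{lem-path}) for the remainder.
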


This result not only answers Hutchinson's question in the affirmative, but also strengthens it in two aspects: it applies to a larger class of graphs and extends choosability to a stronger property of DP-colourability.

Contrasting with list colouring, the concept of DP-colouring places constraints on conflicts of colours in edges, rather than in the colour lists of vertices. Our proof takes advantage of this property. The result implies that 2-connected $K_{2,4}$-minor free graphs  other than odd cycles  and complete graphs are 5-truncated degree-choosable. However, we do not have a direct proof of this result that does not rely on the concept of DP-colouring. 

\section{Two-terminal outerplanar graphs}
  
  In this section we introduce two-terminal outerplanar graphs and prove some lemmas concerning DP-colourings of these graphs. This graph class serves as a key element in the characterization of $K_{2,4}$-minor free graphs given by Ellingham et al.~\cite{EMOT2-16}, and will play a critical role in the proof of our main result.
  
  \begin{definition}
  An \emph{$x$-$y$-outerplanar graph} is a  simple 2-connected outerplane graph, where $xy$   is an edge incident to the unbounded face. A {\em broken $x$-$y$-outerplanar graph} is either a copy of $K_2$ with end vertices $x$ and $y$, or a graph  obtained from an   $x$-$y$-outerplanar graph by deleting the edge $xy$. 
  Both $x$-$y$-outerplanar graphs and broken $x$-$y$-outerplanar graphs are called \emph{two-terminal outerplanar graphs} with terminal vertices $x$ and $y$. A two-terminal outerplanar graph is \emph{trivial} if it is isomorphic to $K_2$.
  \end{definition}

 Thus a two-terminal outerplanar graph $G$ with terminal vertices $x, y$ has a spanning path $P$ joining $x$ and $y$, and $G$ can be embedded in the plane so that all edges not in $P$  lie on the same side of $P$. The path $P$ is called the {\em outer path} of $G$. 

 We write $P=v_1v_2\ldots v_n$ to denote that the path $P$ consists of   $n$ vertices in order, and denote by $v_1v_2\ldots v_nv_1$ the cycle consisting of  $n$ vertices in this cyclic order.

In this paper we consider DP-truncated-degree-colourability of 2-connected $K_{2,4}$-minor free graphs. Two-terminal outerplanar graphs will be used as gadgets in the construction of 2-connected $K_{2,4}$-minor free graphs.

  In the remainder of this section we assume that $G$ is a broken $x$-$y$-outerplanar graph  with   outer path $P$.
  
\begin{definition}
	\label{def-normal}
	Let $(L,M)$ be a cover of $G$ and $e=uv \in E(G)$.  
    If $M_e$ is a matching, then let $$\lambda_{M_e}(v)=1.$$
    If $M_e$ is a subgraph of $K_{2,2}$ with $\Delta(M_e)=2$, then let 
		\[
		\lambda_{M_e}(v) = \begin{cases} 1, &\text{if $M_e$ is a copy of $K_{1,2}$ with the degree 2 node in $L(v)$}, \cr 
		2, &\text{otherwise}.
		\end{cases}
		\]
  If $M_e=M'_e \dot\cup M''_e$ is the disjoint union of a matching $M'_e$   and a subgraph  $M''_e$   of $K_{2,2}$ (and $M_e$ itself is neither a matching nor a subgraph of $K_{2,2}$),
   then let
$$\lambda_{M_e}(v) = \lambda_{M'_e}(v)+ \lambda_{M''_e}(v)= 1+ \lambda_{M''_e}(v).$$  
Notice that $M_e$ can possibly be decomposed into a matching and a subgraph of $K_{2,2}$ in more than one way. In application, the disjoint union is given and hence $\lambda_{M_e}(v)$ is well-defined. 

	For $v \in V(G)$, let 
	\[
	\lambda_{(L,M)}(v) =  
	 \sum_{e \in E_G(v)} \lambda_{M_e}(v),
	 	\]
   where $E_G(v)$ denotes the set of edges incident to $v$ in $G$. For $v \in V(G)$, let
	\[
	\ell_{(L,M)}(v) =  
	 \min\{5, \lambda_{(L,M)}(v)\}.
	 	\]
\end{definition}

If $(L,M)$ is a simple cover, then $\sum_{e \in E_G(v)} \lambda_{M_e}(v)=d_G(v)$ is the degree of $v$. 
Intuitively, we view $\sum_{e \in E_G(v)} \lambda_{M_e}(v) $ as a weighted degree of $v$, where the contribution $\lambda_{M_e}(v)$ of each incident edge $e \in E_G(v)$ to the weighted degree of $v$ is either 1, 2, or 3, depending on the bipartite graph $M_e$.

\begin{definition}
	Let $(L,M)$ be a cover of $G$.
	We say $(L,M)$ is {\em valid} if the following hold:
	\begin{enumerate}
        \item For $e=uv \in E(P)$, $M_e$ is either a matching, or a subgraph of $K_{2,2}$, or the union of a matching and a subgraph of $K_{2,2}$.
        
		\item   $M_e$ is a matching for $e \in E(G)-E(P)$.
		\item For any vertex $v$, $|L(v)| \ge \ell_{(L,M)}(v)$.
		Moreover, if $e=uv \in E(P)$ and $L(u)$ has a node $z$ with $d_{M_e}(z) = 3$, then $|L(v)| \ge  5$.
		\item If $G\cong K_2$ consists of a single edge $e=xy$, then $M_e$ is a subgraph of $K_{2,2}$.
	\end{enumerate} 
\end{definition}

The following Lemma~will be frequently used in the proofs without mentioning it explicitly.
 
\begin{lem}
    \label{lem-2colours}
    Let $(L,M)$ be a valid cover of $G$. For any edge $e=uv$ of $G$ and
    for any node $a \in L(u)$, we have $|N_{M_e}(a)| \le \lambda_{M_e}(v)$. For any edge $e=uv$ of $G$, there are at most two nodes $a \in L(u)$ with $|N_{M_e}(a)|\ge 2$. Moreover, if $\lambda_{M_e}(u) \ge 2$, then there are at most 2 nodes $a \in L(u)$ for which $|N_{M_e}(a)| \ge \min \{2,\lambda_{M_e}(v)\}$.
\end{lem}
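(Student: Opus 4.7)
The plan is to verify each of the three assertions by exhaustive case analysis on the three possible structures of $M_e$ permitted by the definition of a valid cover: (i) $M_e$ is a matching, (ii) $M_e$ is a subgraph of $K_{2,2}$ with $\Delta(M_e)=2$, or (iii) $M_e$ is the prescribed disjoint union $M'_e\dot\cup M''_e$ of a matching $M'_e$ and a subgraph $M''_e$ of $K_{2,2}$. Since $\lambda_{M_e}(v)$ is defined piecewise in exactly this manner, the proof reduces to checking each bound directly against Definition~\ref{def-normal}.

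For the first assertion, I would dispose of case (i) immediately, noting $|N_{M_e}(a)|\le 1=\lambda_{M_e}(v)$. In case (ii) I would distinguish the two $K_{1,2}$ configurations (depending on which side hosts the degree-$2$ node) from the remaining subgraphs of $K_{2,2}$ so as to match the piecewise definition of $\lambda_{M_e}(v)$. For case (iii), the key observation is that $M_e$ contains no parallel pair of links, so $N_{M'_e}(a)$ and $N_{M''_e}(a)$ are disjoint subsets of $L(v)$; therefore $|N_{M_e}(a)|=|N_{M'_e}(a)|+|N_{M''_e}(a)|$, and the bound follows by adding the estimates already obtained for $M'_e$ and $M''_e$ separately, yielding $|N_{M_e}(a)|\le 1+\lambda_{M''_e}(v)=\lambda_{M_e}(v)$.

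For the second assertion, the key point is that in every case any $a\in L(u)$ with $|N_{M_e}(a)|\ge 2$ must contribute at least one neighbour inside the ``$K_{2,2}$-part'' of $M_e$, since the matching part gives at most one neighbour per node. As that part spans at most two nodes of $L(u)$, at most two such $a$ exist. For the third assertion I would split on the value of $\min\{2,\lambda_{M_e}(v)\}$: if it equals $2$ the claim reduces to the second assertion, and if it equals $1$, then $\lambda_{M_e}(v)=1$ together with the hypothesis $\lambda_{M_e}(u)\ge 2$ forces $M_e$ to be a $K_{1,2}$ with its degree-$2$ node in $L(v)$, in which case exactly the two leaves lying in $L(u)$ have a neighbour.

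I do not expect any real obstacle; the whole argument is routine bookkeeping against Definition~\ref{def-normal}. The only subtle point is to confirm, in case (iii), the additivity $|N_{M_e}(a)|=|N_{M'_e}(a)|+|N_{M''_e}(a)|$, which relies on there being no parallel links across the two summands of the decomposition.
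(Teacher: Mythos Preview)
Your proposal is correct and follows essentially the same approach as the paper's proof: both arguments are direct case checks against Definition~\ref{def-normal}, with the third assertion handled by splitting on whether $\min\{2,\lambda_{M_e}(v)\}$ equals $2$ (reducing to the second assertion) or $1$ (forcing $M_e\cong K_{1,2}$ with the degree-$2$ node in $L(v)$). Your write-up is simply more explicit than the paper's, which dispatches the first two assertions with ``follows easily from the definition'' and spells out only the third.
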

\begin{proof}
    It follows easily from the definition that for any node $a \in L(u)$, $|N_{M_e}(a)| \le \lambda_{M_e}(v)$. It also follows from the definition that $L(u)$ contains at most 2 nodes $a$ with $|N_{M_e}(a)| \ge 2$. 
    
    Assume $\lambda_{M_e}(u) \ge 2$. If  $\lambda_{M_e}(v) \ge 2$, then $M_e$ is either a subgraph of $K_{2,2}$, or the union of a matching and a subgraph of $K_{2,2}$. Hence there are at most 2 nodes $a \in L(u)$ for which  $|N_{M_e}(a)| \ge 2 = \min \{2,\lambda_{M_e}(v)\}$. If 
    $\lambda_{M_e}(v) = 1$, then  $M_e$ is a copy of $K_{1,2}$ with the degree 2 node in $L(v)$.  Hence    
  there are exactly two nodes $a \in L(u)$ for which  $|N_{M_e}(a)|= 1 = \min \{2,\lambda_{M_e}(v)\}$.
\end{proof}

\begin{definition}
	\label{def-xy}
	Assume $(L,M)$ is a cover of $G$. Let $\Bar{M}_{xy}$ be a bipartite graph with partite sets $L(x)$ and $L(y)$. We say $\Bar{M}_{xy}$ is a    \emph{coding} of $(L, M)$ if the following hold:
 \begin{enumerate}
     \item For any $a \in L(x), b\in L(y)$ with $ab  \notin \Bar{M}_{xy}$, there is an $(L,M)$-colouring $\phi$ of $G$ such that $\phi(x)=a$ and $\phi(y)=b$.
     \item The links  in $\bar{M}_{xy}$  induces a subgraph of $K_{2,2}$.
     \item $\lambda_{(L,M)}(x) \ge \lambda_{\bar{M}_{xy}}(x)$ and $\lambda_{(L,M)}(y) \ge  \lambda_{\bar{M}_{xy}}(y)$.
 \end{enumerate}   
\end{definition}

The following proposition is a Key Lemma in this paper.

\begin{lem}
	\label{key-lemma}
	If $G$ is a broken $x$-$y$-outerplanar graph and $(L, M)$ is a valid cover of $G$, then
   $(L,M)$ has a coding $\bar{M}_{xy}$.  
\end{lem}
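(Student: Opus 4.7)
The plan is to prove the lemma by induction on $|V(G)|$. The base case $|V(G)|=2$ is immediate: $G$ is the single edge $xy$, validity condition~(4) forces $M_{xy}$ to be a subgraph of $K_{2,2}$, and setting $\bar{M}_{xy}:=M_{xy}$ satisfies the three coding conditions at once. For the inductive step ($|V(G)|\ge 3$), I write the outer path as $P=xu_1\cdots u_{m-1}y$ and split into two cases depending on whether $G$ has a chord.

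If $G$ has a chord, I exploit the weak dual tree of the outerplane embedding of $G\cup\{xy\}$: since the chord forces the tree to have at least two internal faces, at least one leaf face $F$ is not incident to $xy$. That face is bounded by a single chord $u_iu_j$ (with $j-i\ge 2$) together with the subpath $u_iu_{i+1}\cdots u_j$ of $P$, and outerplanarity forces $u_{i+1},\dots,u_{j-1}$ to have degree $2$ in $G$. I first apply induction to the subpath $G_1:=u_i\cdots u_j$, a nontrivial broken $u_i$-$u_j$-outerplanar graph whose induced cover is routinely valid, to obtain a coding $\bar{N}_{u_iu_j}$. Then I let $G_2:=G-\{u_{i+1},\dots,u_{j-1}\}$ and equip it with the cover $(L,M')$ that agrees with $(L,M)$ except that the edge $u_iu_j$ (now on the outer path of $G_2$) is assigned the disjoint union $M'_{u_iu_j}:=M_{u_iu_j}\,\dot\cup\,(\bar{N}_{u_iu_j}\setminus M_{u_iu_j})$, which is a matching plus a $K_{2,2}$-subgraph. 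The crucial inequality $\lambda_{(L,M')}(u_i)\le\lambda_{(L,M)}(u_i)$ (and symmetrically at $u_j$) follows from the coding bound $\lambda_{\bar{N}_{u_iu_j}}(u_i)\le\lambda_{M_{u_iu_{i+1}}}(u_i)$, so $(L,M')$ is valid on $G_2$. Induction applied to $G_2$ (which has at least one fewer vertex) produces a coding $\bar{M}_{xy}$, and I verify it remains a coding for $(L,M)$ on $G$: given $ab\notin\bar{M}_{xy}$, an $(L,M')$-colouring $\phi$ of $G_2$ with $\phi(x)=a,\phi(y)=b$ has $\phi(u_i)\phi(u_j)\notin M'_{u_iu_j}$, so it respects the original chord $M_{u_iu_j}$ and, by the coding property of $\bar{N}_{u_iu_j}$, extends to a legal colouring of the subpath $G_1$.

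If $G$ has no chord, then $G$ is the outer path, of length $m\ge 2$. For $m\ge 3$, I set $G':=G-y$; this is a nontrivial broken $x$-$u_{m-1}$-outerplanar graph with valid induced cover, so induction gives a coding $\bar{N}_{xu_{m-1}}$, and I define $\bar{M}_{xy}$ by the composition rule: $ab\in\bar{M}_{xy}$ iff every $c\in L(u_{m-1})$ is blocked by $ac\in\bar{N}_{xu_{m-1}}$ or $cb\in M_{u_{m-1}y}$. Coding condition~(1) then follows by pasting colourings. The length-$2$ subcase $G=xu_1y$ is handled directly by the analogous formula for $\bar{M}_{xy}$ in terms of $M_{xu_1}$ and $M_{u_1y}$, bypassing an induction that would otherwise leave $G-y\cong K_2$ whose edge cover need not satisfy validity~(4).

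I expect the main obstacle to be the verification, in the no-chord case, that the composed $\bar{M}_{xy}$ is indeed a subgraph of $K_{2,2}$ and satisfies $\lambda_{\bar{M}_{xy}}(x)\le\lambda_{(L,M)}(x)$ together with the symmetric bound at $y$. This calls for a careful case analysis on the structure of $\bar{N}_{xu_{m-1}}$ and $M_{u_{m-1}y}$, each being a matching, a subgraph of $K_{2,2}$, or a union of the two, invoking Lemma~\ref{lem-2colours} to bound the number of nodes $a\in L(x)$ for which $|N_{\bar{N}_{xu_{m-1}}}(a)|$ is large, and using $|L(u_{m-1})|\ge\ell_{(L,M)}(u_{m-1})$ to show that only few pairs $(a,b)$ can force every $c\in L(u_{m-1})$ to be blocked. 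A smaller but similar accounting is needed in the chord case to verify the ``moreover'' clause of validity condition~(3) if $M'_{u_iu_j}$ happens to acquire a node of degree $3$.
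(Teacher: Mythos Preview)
Your plan is sound and would work, but it is organised differently from the paper and you have deferred precisely the part that constitutes the bulk of the argument.

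The paper's induction is more uniform: it never distinguishes chord versus no-chord. It simply picks any interior degree-$2$ vertex $u$ with neighbours $u_1,u_2$ on the outer path, deletes $u$, and adds the edge $e=u_1u_2$ (which may or may not already be present as a chord). The composition $M^*_e=\{ab:N_{M_{e_1}}(a)\cup N_{M_{e_2}}(b)=L(u)\}$ is exactly your formula. The entire proof then consists of a single case analysis on $(\lambda_{M_{e_1}}(u_1),\lambda_{M_{e_2}}(u_2))\in\{1,\ge 2\}^2$ showing that $M^*_e$ has the right shape and $\lambda$-bounds, so that the reduced cover (possibly after deleting at most two nodes from $L(u_2)$) is again valid. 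Your chord case and your no-chord case both reduce to this same computation: in the chord case $\bar N_{u_iu_j}$ plays the role of $M^*_e$ and you union it with the matching $M_{u_iu_j}$; in the no-chord case you compose $\bar N_{xu_{m-1}}$ with $M_{u_{m-1}y}$ through $L(u_{m-1})$. So the two approaches differ only in bookkeeping, and the ``careful case analysis'' you postpone is precisely Cases~1--3 (with Subcases~2.2 and~3.2) of the paper.

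One point to be careful about: in your chord case, when $M'_{u_iu_j}=M_{u_iu_j}\cup\bar N_{u_iu_j}$ acquires a degree-$3$ node but $|L(u_j)|\le 4$, the fix is not merely an ``accounting'' argument. You must delete from $L(u_j)$ the at most two nodes meeting $\bar N_{u_iu_j}$ and then verify that the list at $u_j$ is still large enough; this works because $|L(u_j)|\le 4$ forces $|L(u_j)|=\lambda_{(L,M)}(u_j)$, and removing the contribution of the edge $u_{j-1}u_j$ drops $\lambda$ by at least $\lambda_{\bar N_{u_iu_j}}(u_j)$. This is the same node-deletion trick the paper uses in Subcases~2.2 and~3.2, and without it the reduced cover can fail to be valid.
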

\begin{proof}
	We prove the Lemma~by induction on $|V(G)|$.

	If $G \cong K_2$ consists of a single edge $xy$, then   $\bar{M}_{xy}=M_{xy}$ is a coding of $(L,M)$ which is a subgraph of $K_{2,2}$ by definition, and it holds that $\lambda_{(L,M)}(x) = \lambda_{\bar{M}_{xy}}(x)$ and $\lambda_{(L,M)}(y) =  \lambda_{\bar{M}_{xy}}(y)$. 
 
 We assume $|V(G)| \ge 3$. Then we have $xy \notin E(G)$ and that there exists a vertex $u \in V(G) \setminus \{x, y\}$ with $d_G(u)=2$. Let $u_1$ and $u_2$ be the neighbours of $u$, and let $e_i=u_iu$. Note that $e_1, e_2$ are in the outer path $P$ of $G$. 
	
Let $G'$ be the graph with $V(G')=V(G)-\{u\}$ and $E(G')= E(G-u) \cup \{e\}$, where $e=u_1u_2$.
	
It is possible that $u_1u_2 \in E(G)$. In this case,  we have $\{u_1,u_2\} \ne \{x,y\}$ and $E(G') = E(G-u)$.  In any case, $G'$ is a broken $x$-$y$-outerplanar graph with outer path $P'=(P-u)+u_1u_2$.

We construct a cover $(L',M')$ of $G'$ as follows: 
\begin{itemize}
	\item For $v \in V(G')$, set $L'(v)=L(v)$.
	\item For $e' \in E(G')-\{e\}$, set $M'_{e'}=M_{e'}$.
	\item Let $M^*_e= \{ab : a \in L(u_1), b \in L(u_2), N_{M_{e_1}}(a) \cup N_{M_{e_2}}(b) = L(u) \}$. If $e \notin E(G)$, we set $M'_e =M^*_e$; otherwise, set $M'_e = M^*_e \cup M_e$.
\end{itemize}

Recall that our goal is to establish that $(L,M)$ has a coding. We claim that it suffices to show the following:
 
\begin{enumerate}
    \item  There is a coding $\bar{M}_{xy}$ of $(L',M')$ (which will also serve as a coding of $(L,M)$). 
    \item For all $v \in V(G')$, $\lambda_{(L',M')}(v) \le  \lambda_{(L,M)}(v)$.
\end{enumerate}

Suppose these two statements hold. As $\bar{M}_{xy}$ is a coding of $(L', M')$, for any $a \in L'(x) = L(x)$ and $b \in L'(y) = L(y)$ with $ab \notin \bar{M}_{xy}$, there is an $(L',M')$-colouring $\phi$ of $G'$ such that $\phi(x) = a$ and $\phi(y) = b$. Write $\phi(u_1)=c_1$ and $\phi(u_2)=c_2$. Since $c_1c_2 \notin E(M'_e)$, we have that $L(u) \setminus (N_{M_{e_1}}(c_1) \cup N_{M_{e_2}}(c_2)) \ne \emptyset$. We can extend $\phi$ to an $(L,M)$-colouring of $G$ by letting $\phi(u)=c$ for some $c \in L(u) \setminus (N_{M_{e_1}}(c_1) \cup N_{M_{e_2}}(c_2))$. Therefore, by these two statements, we conclude that $\Bar{M}_{xy}$ is a coding of $(L,M)$. 

To show the first statement, we may prove that $(L', M')$ is a valid cover of $G'$ and then apply the induction hypothesis. 

We remark that in some cases, we need to consider another cover $(L'', M'')$ of $G'$ which is obtained from $(L',M')$ by removing some nodes, and prove the two statements for $(L'', M'')$ instead.

	\medskip\noindent
    \textbf{Case 1:}  $\lambda_{M_{e_i}}(u_i)=1$ for $i=1,2$.
	
\smallskip
We shall prove that $M^*_e$ is a matching consisting of at most two links.

Assume both $M_{e_1}$ and $M_{e_2}$ are matchings.  For any $a \in L(u_1)$ and any $b \in L(u_2)$, if $ab \in M^*_e$, then 
$N_{M_{e_1}}(a) \cup N_{M_{e_2}}(b) = L(u)$. As $|N_{M_{e_1}}(a)| \le 1$ and $|N_{M_{e_2}}(b)| \le 1$, $N_{M_{e_1}}(a) \cup N_{M_{e_2}}(b) = L(u)$ holds if and only if $|L(u)|=2$, 
$|N_{M_{e_1}}(a)|=|N_{M_{e_2}}(b)| = 1$ and $N_{M_{e_1}}(a) \cap N_{M_{e_2}}(b) = \emptyset$. Therefore  $M^*_e$ is a matching consisting of at most 2 links (see Figure~\ref{fig:1}(a)). Note that $M^*_e$ can be empty, which is also viewed as a matching. 

Assume $M_{e_1}$ is a matching and $M_{e_2}$ is a copy of $K_{1, 2}$ with the degree 2 node in $L(u_2)$.  Then $\lambda_{M_{e_2}}(u)=2$ and hence  $|L(u)|\ge  \lambda_{M_{e_2}}(u)+\lambda_{M_{e_1}}(u) = 3$.  For $a \in L(u_1), b \in L(u_2)$, if $ab \in M^*_e$, then 
	$|N_{M_{e_1}}(a)|=1$, $|N_{M_{e_2}}(b)| = 2$, $N_{M_{e_1}}(a)\cap N_{M_{e_2}}(b) = \emptyset$ and $|L(u)|=3$. This occurs only if $b$ is the degree 2 node of $M_{e_2}$ in $L(u_2)$ and $a$ is adjacent  in $M_{e_1}$ to the node in $L(u)$ not adjacent to $b$  in $M_{e_2}$. Hence $M^*_e$ is a matching consisting of at most one link (see Figure~\ref{fig:1}(b)). 
	
	Assume $M_{e_i}$ is a copy of $K_{1, 2}$ with the degree 2 node in $L(u_i)$ for $i=1, 2$. Then $\lambda_{M_{e_i}}(u)=2$ for $i=1,2$ and hence  $|L(u)|\ge  \lambda_{M_{e_2}}(u)+\lambda_{M_{e_1}}(u) = 4$. For $a \in L(u_1)$ and $b \in L(u_2)$, if $ab \in M^*_e$, then $|N_{M_{e_1}}(a)|=|N_{M_{e_2}}(b)| = 2$  and $N_{M_{e_1}}(a) \cap N_{M_{e_2}}(b) = \emptyset$. This occurs only if $a$ is the degree 2 node of $M_{e_1}$ in $L(u_1)$ and $b$ is the degree 2 node of $M_{e_2}$ in $L(u_2)$. Thus $M^*_e$  has at most one link (see Figure~\ref{fig:1}(c)).

\begin{figure} [htbp]
\centering
\subfigure[]{
\begin{minipage} [t]{0.31\linewidth} \centering
 \includegraphics [scale=0.8]{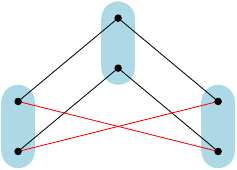} 
\end{minipage}
}
\subfigure[]{
\begin{minipage} [t]{0.31\linewidth} \centering
\includegraphics [scale=0.8]{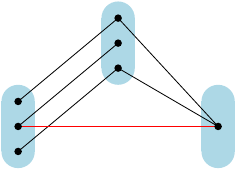} 
\end{minipage}
}
 \subfigure[]{
\begin{minipage} [t]{0.31\linewidth} \centering
 \includegraphics [scale=0.8]{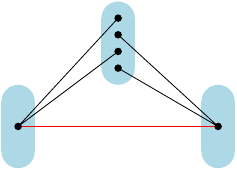} 
\end{minipage}
}
\caption{(a) $\lambda_{M_{e_i}}(u)=1$ for $i=1,2$; (b) $\lambda_{M_{e_1}}(u)=1$ and $\lambda_{M_{e_2}}(u)=2$; (c) $\lambda_{M_{e_i}}(u)=2$ for $i=1,2$. The black lines represent   links contained in $M_{e_i}$ for $i=1, 2$ and the red lines represent links in $M^*_e$. } \label{fig:1}
\end{figure}

If $e \notin E(G)$, then $e$ is a new edge added to $G'$. Thus $M'_e=M^*_e$ is a matching consisting of at most two links,  and for $i=1,2$,   $\lambda_{M'_e}(u_i) =\lambda_{M_{e_i}}(u_i)=1$.

If $e \in E(G)$, then  $\{u_1,u_2\}\ne \{x,y\}$ and $M'_e=M^*_e \cup M_e$. We have $\lambda_{M'_e}(u_i)=2$ if $M^*_e \not\subseteq M_e$, and $\lambda_{M'_e}(u_i)=1$ otherwise. This implies that $\lambda_{M'_e}(u_i) \le 2 = \lambda_{M_{e_i}}(u_i)+\lambda_{M_e}(u_i)$.

In any case, we have $\lambda_{(L',M')}(v) \le \lambda_{(L,M)}(v)$ and $\ell_{(L',M')}(v) \le \ell_{(L,M)}(v)$ for any $v \in V(G')$. Hence
$|L'(v)|=|L(v)| \ge \ell_{(L,M)}(v) \ge \ell_{(L',M')}(v)$ for $v \in V(G')$. 
If $\{u_1, u_2\}=\{x,y\}$, then $M'_e$ is a subgraph of $K_{2,2}$ (as $e \notin E(G)$).
Moreover, for any $c \in L(u_1) \cup L(u_2)$, $d_{M'_e}(c) \le 2$ as $M^*_e$ is a matching.
We thus conclude that $(L', M')$ is a valid cover of $G'$. 

It follows from the induction hypothesis that $(L',M')$ has a coding $\bar{M}_{xy}$. Therefore, we have that $\bar{M}_{xy}$ is a coding of $(L, M)$. 
 
	\medskip\noindent
    \textbf{Case 2:}  $\lambda_{M_{e_1}}(u_1)=1$ and $\lambda_{M_{e_2}}(u_2)\ge 2$.

	\smallskip
	
	We shall prove that $M^*_e$ consists of at most two links, and if $M^*_e$ is a copy of $K_{1,2}$, then the degree 2 node of $M^*_e$ is contained in $L(u_1)$.  
	
	As $\lambda_{M_{e_1}}(u_1)=1$, $M_{e_1}$ is either a matching  or $K_{1,2}$ with the degree 2 node in $L(u_1)$. We first consider the latter case. Then $\lambda_{M_{e_1}}(u) =2$ and hence $|L(u)| \ge \lambda_{M_{e_1}}(u) + \lambda_{M_{e_2}}(u)=3$.

	If $|L(u)|= 3$, then $\lambda_{M_{e_2}}(u)=1$ and hence $M_{e_2}$ is  $K_{1,2}$ with the degree 2 node in  $L(u)$.  For $a \in L(u_1), b \in L(u_2)$, if $ab \in M^*_e$, then $|N_{M_{e_1}}(a)|=2$ and $|N_{M_{e_2}}(b)| = 1$. Therefore $M^*_e$ is either an empty graph or a copy of $K_{1,2}$ with the degree 2 node contained in $L(u_1)$ (see Figure~\ref{fig:2}(a)).

	Assume $|L(u)|\ge 4$. For $a \in L(u_1), b \in L(u_2)$, if $ab \in M^*_e$, then 
	$|N_{M_{e_1}}(a)|= 2$ and $|N_{M_{e_2}}(b)| \ge |L(u)| - |N_{M_{e_1}}(a)| \ge  2$. Since $L(u_i)$ has at most two nodes of degree greater than 1 in $M_{e_i}$ for $i=1, 2$, $M^*_e$ is a subgraph  of $K_{1, 2}$ so that if it is a copy of $K_{1,2}$, the degree 2 node is in $L(u_1)$ (see Figure~\ref{fig:2}(b)).

 \begin{figure} [htbp]
\centering
\subfigure[]{
\begin{minipage} [t]{0.35\linewidth} \centering
 \includegraphics [scale=0.8]{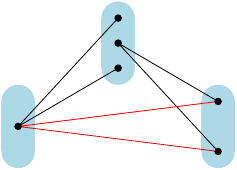} 
\end{minipage}
}
\subfigure[]{
\begin{minipage} [t]{0.35\linewidth} \centering
\includegraphics [scale=0.8]{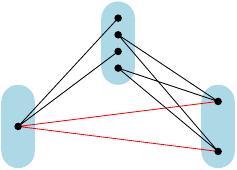} 
\end{minipage}
}
\caption{$M_{e_1}$ is $K_{1,2}$ with degree 2 node in  $L(u_1)$. (a) $|L(u)|=3$; (b) $|L(u)|\ge4$.} \label{fig:2}
\end{figure}
	
	Next we consider the case that $M_{e_1}$ is a matching.

	If $|L(u)| \ge 4$, then either each node in $L(u_2)$ has degree at most 2 in $M_{e_2}$ or $|L(u)| \ge 5$. In any case, for any $a \in L(u_1), b \in L(u_2)$, we have $|N_{M_{e_1}}(a)|+|N_{M_{e_2}}(b)| < |L(u)|$ and hence $N_{M_{e_1}}(a) \cup N_{M_{e_2}}(b) \ne  L(u)$. Therefore  $M^*_e$ is  an empty graph (see Figure~\ref{fig:3}(a)).
	
	Assume $|L(u)|= 2$.  Then $\lambda_{M_{e_2}}(u) =1$ and $M_{e_2}$ is $K_{1,2}$ with degree 2 node in $L(u)$. For $a \in L(u_1), b \in L(u_2)$, if $ab \in M^*_e$, then $|N_{M_{e_1}}(a)|=|N_{M_{e_2}}(b)| = 1$. Therefore $M^*_e$ is either an empty graph or $K_{1,2}$ with the degree 2 node contained in $L(u_1)$ (see Figure~\ref{fig:3}(b)).
	
	Assume $|L(u)|= 3$. For $a \in L(u_1), b \in L(u_2)$, if $ab \in M^*_e$, then $|N_{M_{e_1}}(a)|=1$ and $|N_{M_{e_2}}(b)| = 2$. Since $L(u_2)$ has at most two nodes of degree greater than 1 in $M_{e_2}$, $M^*_e$ is either a matching of at most two links or a copy of $K_{1,2}$ with the degree 2 node contained in $L(u_1)$ (see Figures~\ref{fig:3}(c) and (d)).

\begin{figure} [htbp]
\centering
\subfigure[]{
\begin{minipage} [t]{0.22\linewidth} \centering
 \includegraphics [scale=0.8]{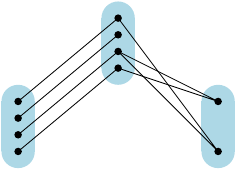} 
\end{minipage}
}
\subfigure[]{
\begin{minipage} [t]{0.22\linewidth} \centering
\includegraphics [scale=0.8]{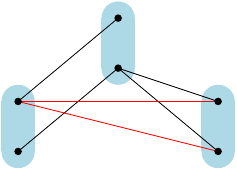} 
\end{minipage}
}
\subfigure[]{
\begin{minipage} [t]{0.22\linewidth} \centering
\includegraphics [scale=0.8]{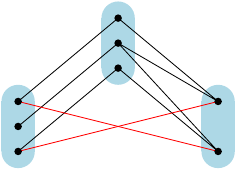} 
\end{minipage}
}
\subfigure[]{
\begin{minipage} [t]{0.22\linewidth} \centering
\includegraphics [scale=0.8]{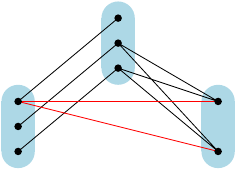} 
\end{minipage}
}
\caption{$M_{e_1}$ is a matching.  (a) $|L(u)|\ge4$; (b) $|L(u)|=2$; (c), (d) $|L(u)|=3$.} \label{fig:3}
    \end{figure}

\medskip\noindent
\textbf{Subcase 2.1:}  $e \notin E(G)$.

In this case,  $M'_e=M^*_e$. For $i=1,2$,  $\lambda_{M'_e}(u_i) \le \lambda_{M_{e_i}}(u_i)$. We have that $\lambda_{(L',M')}(v) \le \lambda_{(L,M)}(v)$ for any $v \in V(G')$.
Thus $|L'(v)|=|L(v)| \ge \ell_{(L,M)}(v) \ge \ell_{(L',M')}(v)$ for $v \in V(G')$. 
We have that $(L', M')$ is a valid cover of $G'$ (as one can easily verify that $M'_e$ is a subgraph of $K_{2,2}$ if $\{u_1, u_2\}=\{x,y\}$; and $d_{M'_e}(c) \le 2$ for any $c \in L(u_1) \cup L(u_2)$). 
So, $(L',M')$ has a coding $\bar{M}_{xy}$ which is a subgraph of $K_{2,2}$ and $\lambda_{(L',M')}(x) \ge \lambda_{\bar{M}_{xy}}(x)$ and $\lambda_{(L',M')}(y) \ge  \lambda_{\bar{M}_{xy}}(y)$.  
Again, $\bar{M}_{xy}$ is a coding of $(L, M)$ satisfying $\lambda_{(L,M)}(x) \ge \lambda_{\bar{M}_{xy}}(x)$ and $\lambda_{(L,M)}(y) \ge  \lambda_{\bar{M}_{xy}}(y)$.

\medskip\noindent
\textbf{Subcase 2.2:} $e \in E(G)$. 

In this case,  $M'_e=M^*_e\cup M_e$.

Assume $|L(u_2)|\ge 5$. As $\lambda_{M'_e}(u_1) \le 2= \lambda_{M_e}(u_1) + \lambda_{M_{e_1}}(u_1)$ and $\lambda_{M'_e}(u_2) \le 3 \le \lambda_{M_e}(u_2) + \lambda_{M_{e_2}}(u_2)$, we have $\lambda_{(L',M')}(v) \le \lambda_{(L,M)}(v)$ and $|L'(v)|=|L(v)| \ge \ell_{(L,M)}(v) \ge \ell_{(L',M')}(v)$ for all $v \in V(G')$. 
Recall that \( M^*_e \) consists of at most two links and that any degree 2 node in \( M^*_e \) must be contained in \( L(u_1) \). Therefore, if there exists \( c \in L(u_1) \cup L(u_2) \) with \( d_{M'_e}(c) \geq 3 \), then \( c \) must be in \( L(u_1) \).
Since $|L(u_2)|\ge 5$, we can conclude that the cover $(L', M')$ is  a valid cover of $G'$. As before, we have that the cover $(L,M)$ has a coding $\bar{M}_{xy}$ which is a subgraph of $K_{2,2}$ satisfying $\lambda_{(L,M)}(x) \ge \lambda_{\bar{M}_{xy}}(x)$ and $\lambda_{(L,M)}(y) \ge  \lambda_{\bar{M}_{xy}}(y)$.
 
Assume $|L(u_2)|\le 4$. In particular, $\lambda_{(L,M)}(u_2) = \ell_{(L,M)}(u_2) \le 4$.
Let $B = \{z \in L(u_2): d_{M^*_e}(z) \ge 1\}$. As \( M^*_e \) has at most two links, we have $|B|\le 2$.  
Let $(L'',M'')$ be the cover of $G'$ obtained from $(L',M')$ by deleting $B$, i.e.,  $L''(u_2)=L'(u_2)-B$ and $M''_f=M'_f-B$ for any $f \in E(G')$. Note that every $(L'',M'')$-colouring of $G'$ can serve as an $(L',M')$-colouring as well.  Therefore, it suffices to prove that $(L'',M'')$ has a coding $\bar{M}_{xy}$ satisfying the desired properties.

Now $M''_e$ is a matching as it is a subgraph of $M_e$. 
So, $\lambda_{M''_e}(u_i) = \lambda_{M_e}(u_i) = 1$ for $i=1, 2$. We have $\lambda_{(L'',M'')}(v)  \le \lambda_{(L,M)}(v)$ for any $v \in V(G') \setminus \{u_2\}$ and $\lambda_{(L'',M'')}(u_2)= \lambda_{(L,M)}(u_2)- \lambda_{M_{e_2}}(u_2) \le \lambda_{(L,M)}(u_2)-2 \le 2$. 
Therefore, $|L''(v)|=|L(v)| \ge  \ell_{(L,M)}(v) \ge \ell_{(L'',M'')}(v)$ for any $v \in V(G') \setminus \{u_2\}$ and $|L''(u_2)|\ge |L(u_2)|-2\ge \ell_{(L,M)}(u_2)-2 \ge \ell_{(L'',M'')}(u_2)$ (as $\ell_{(L,M)}(u_2)=\lambda_{(L,M)}(u_2)$ and $\ell_{(L'',M'')}(u_2)=\lambda_{(L'',M'')}(u_2)$). 
Also, if there exists $e'=u'v' \in E(G')$ such that $d_{M''_{e'}}(c) = 3$ for some $c \in L''(u')$, then $e' \neq e$ and $v' \neq u_2$ (as $|L(u_2)| \le 4$). We have $|L''(v')| = |L(v')| \ge 5$. Altogether, we conclude that $(L'', M'')$ is a valid cover of $G'$ and has a coding $\bar{M}_{xy}$, and hence $\bar{M}_{xy}$ is a coding of $(L, M)$. 

\medskip\noindent
\textbf{Case 3:} $\lambda_{M_{e_i}}(u_i)\ge 2$ for $i=1,2$.

	\smallskip
	
	We shall prove that $M^*_e$ is a subgraph of $K_{2,2}$.

	If $|L(u)|= 2$, then each of $M_{e_1}$ and $M_{e_2}$ is a copy of $K_{1,2}$ with the degree 2 node in $L(u)$. Thus $M^*_e$ is  either an empty graph or $K_{2,2}$ (see Figure~\ref{fig:4}(a)).
		
	If $|L(u)|= 3$, then one of $M_{e_1}$ and $M_{e_2}$ is a copy of $K_{1,2}$ with the degree 2 node in $L(u)$.  By symmetry, we may assume that it is $M_{e_1}$.  For $a \in L(u_1), b \in L(u_2)$, if $ab \in M^*_e$, then $|N_{M_{e_1}}(a)|=1$ and $|N_{M_{e_2}}(b)| = 2$. Therefore $M^*_e$ is an empty graph, or $K_{2,2}$, or $K_{1,2}$ with the degree 2 node contained in $L(u_2)$ (see Figures~\ref{fig:4}(b) and (c)).
	
	Assume $|L(u)|\ge 4$. For $a \in L(u_1), b \in L(u_2)$, if $ab \in M^*_e$, then 
	$|N_{M_{e_1}}(a)|\ge 2$ and $|N_{M_{e_2}}(b)| \ge  2$. Since $L(u_i)$ has at most two nodes of degree greater than 1 in $M_{e_i}$ for $i=1, 2$, $M^*_e$ is a subgraph of $K_{2,2}$ (see Figure~\ref{fig:4}(d)).

 \begin{figure} [htbp]
\centering
\subfigure[]{
\begin{minipage} [t]{0.22\linewidth} \centering
\includegraphics [scale=0.8]{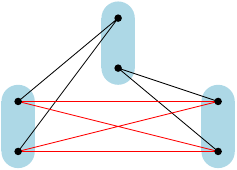} 
\end{minipage}
}
\subfigure[]{
\begin{minipage} [t]{0.22\linewidth} \centering
\includegraphics [scale=0.8]{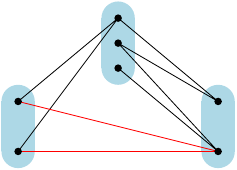} 
\end{minipage}
}
\subfigure[]{
\begin{minipage} [t]{0.22\linewidth} \centering
\includegraphics [scale=0.8]{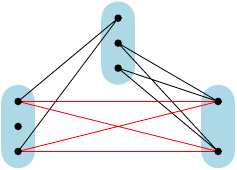} 
\end{minipage}
}
\subfigure[]{
\begin{minipage} [t]{0.22\linewidth} \centering
\includegraphics [scale=0.8]{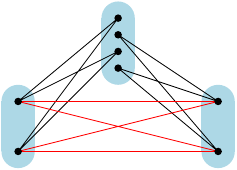} 
\end{minipage}
}

\caption{(a) $|L(u)|=2$; (b),(c) $|L(u)|=3$; (d) $|L(u)|\ge 4$.} \label{fig:4}
\end{figure}

	\medskip\noindent
    \textbf{Subcase 3.1:} $e\notin E(G)$.
	
	In this case, we can deduce from the arguments used in Subcase~2.1 that $(L,M)$ has a coding $\bar{M}_{xy}$ satisfying $\lambda_{(L,M)}(x) \ge \lambda_{\bar{M}_{xy}}(x)$ and $\lambda_{(L,M)}(y) \ge  \lambda_{\bar{M}_{xy}}(y)$.

\medskip\noindent
\textbf{Subcase 3.2:} $e\in E(G)$.

If $|L(u_i)| \le 4$   for some $i \in \{1, 2\}$, say, $|L(u_2)|\leq 4$, then let $B = \{z \in L(u_2): d_{M^*_e}(z) \ge 1\}$. We have $|B|\le 2$. 
Let $(L'',M'')$ be obtained from $(L',M')$ by deleting $B$. As any $(L'',M'')$-colouring of $G'$ is also an $(L',M')$-colouring, it suffices to prove $(L'',M'')$ has a coding $\bar{M}_{xy}$. 
Indeed, by the arguments in Subcase~2.2, one can conclude that $(L'', M'')$ has a coding $\bar{M}_{xy}$.

Assume $|L(u_1)| \ge 5$ and $|L(u_2)| \ge 5$.	
	 
	For $i=1,2$, $\lambda_{M'_e}(u_i) \le \lambda_{M_e}(u_i)+\lambda_{M^*_e}(u_i)\le\lambda_{M_e}(u_i)+\lambda_{M_{e_i}}(u_i)$. 
    Thus, for all $v \in V(G')$, $\lambda_{(L',M')}(v) \le \lambda_{(L,M)}(v)$ and $|L'(v)|=|L(v)| \ge \ell_{(L,M)}(v) \ge \ell_{(L',M')}(v)$. 
    Since $|L(u_1)|\ge 5$ and $|L(u_2)|\ge 5$, the cover $(L', M')$ is a valid cover of $G'$, and hence $(L',M')$ has a coding $\bar{M}_{xy}$, which is a coding of $(L, M)$. 
\end{proof}

\noindent
{\bf Remark.} 
Since $G+xy$ is 2-connected, $x$ and $y$ are the only vertices of $G$ that may have degree $1$ in $G$.
For a valid cover $(L,M)$ of $G$, it may happen that $\ell_{(L,M)}(x) =\ell_{(L,M)}(y) = 1$, and it may happen that a coding $M_{xy}$ is a complete bipartite graph with partite sets $L(x)$ and $L(y)$. In such a case, there is no   colouring $\phi$ of $\{x,y\}$ with $\phi(x) \in L(x), \phi(y) \in L(y)$ and $\phi(x)\phi(y) \notin M_{xy}$.
Nevertheless,  in our application of Lemma~\ref{key-lemma}, a broken $x$-$y$-outerplanar graph is only a part of a larger graph in which $x$ and $y$ have other neighbours, and $ \ell_{(L,M)}(x)$ and $\ell_{(L,M)}(y)$  will be larger. Hence $|L(x)|$ and $|L(y)|$ will be larger as well and there are always colourings $\phi$ of $\{x,y\}$ with $\phi(x) \in L(x)$, $\phi(y) \in L(y)$ and $\phi(x)\phi(y) \notin M_{xy}$.

\section{2-connected $K_{2,4}$-minor free graphs}

In this section we present the characterization of 2-connected $K_{2,4}$-minor free graphs given in  \cite{EMOT2-16}.
We first define an infinite class ${\mathcal{G}}$ of graphs and a family $\mathcal{G}'$ of eleven small graphs.  Then we discuss how 2-connected $K_{2,4}$-minor free graphs are constructed from these graphs.

 For $n \geq 6$ and $r, s \in\{2,3 \ldots, n-3\}$, let $G_{n, r, s}$ consist of a spanning path $v_1 v_2 \ldots v_n$, which we call the {\em spine}, and edges $v_1 v_{n-i}$ for $1 \leq i \leq r$ and $v_n v_{1+j}$ for $1 \leq j \leq s$. The graph $G_{n, r, s}^{+}$ is obtained from $G_{n, r, s}$ by adding the edge $v_1 v_n$. We use $G_{n, r, s}^{(+)}$ to denote a graph that is either $G_{n,r,s}$ or $G_{n,r,s}^+$. The {\em second spine} of the graph $G_{n, r, s}^{(+)}$ is the spanning path $v_{n-2}v_{n-3}\dots v_2v_1v_{n-1}v_n$. Two examples are shown in Figure~\ref{example for G_nsr}. Let 
\[
{\mathcal{G}}=\{G_{n,r,s}^{(+)}: r\ge2, s\ge3, r+s \in \{n-2,n-1\}\}.
\]

 \begin{figure} [htbp]
\centering
\captionsetup[subfigure]{labelformat=empty}
\subfigure{
\begin{minipage} [t]{0.4\linewidth} \centering
 \includegraphics [scale=1.2]{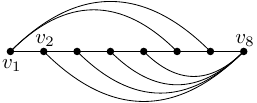} 
\end{minipage}
}
\subfigure{
\begin{minipage} [t]{0.4\linewidth} \centering
\includegraphics [scale=1.2]{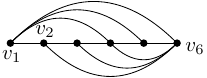} 
\end{minipage}
}
\caption{The graphs $G_{8,2,4}$ and $G_{6,2,3}^+$.} 
\label{example for G_nsr}
\end{figure}

We denote by $K_5^-$ the graph obtained from $K_5$ by deleting one edge, and $K_3 \Box K_2$ the Cartesian product of $K_3$ and $K_2$.
Let   
\[
\mathcal{G}' = \{ K_5,K_5^-, K_{3,3}, K_3 \Box K_2, A, A^+, B, B^+, C, C^+, D\},
\]
 where the graphs $A, A^+, B, B^+, C, C^+$ and $D$  are    depicted in Figure~\ref{nine graphs} below.

\begin{figure} [htbp]
\centering
\captionsetup[subfigure]{labelformat=empty}
 \subfigure{
\begin{minipage} [t]{0.18\linewidth} \centering
 \includegraphics [scale=1.2]{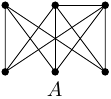} 
\end{minipage}
}
\subfigure{
\begin{minipage} [t]{0.18\linewidth} \centering
 \includegraphics [scale=1.2]{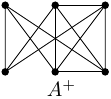}  
\end{minipage}
}
\subfigure{
\begin{minipage} [t]{0.18\linewidth} \centering
\includegraphics [scale=1.2]{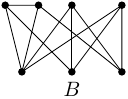} 
\end{minipage}
}
 \subfigure{
\begin{minipage} [t]{0.18\linewidth} \centering
 \includegraphics [scale=1.2]{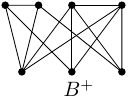} 
\end{minipage}
}
\subfigure{
\begin{minipage} [t]{0.18\linewidth} \centering
 \includegraphics [scale=1.2]{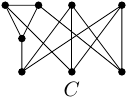}  
\end{minipage}
}
\subfigure{
\begin{minipage} [t]{0.18\linewidth} \centering
\includegraphics [scale=1.2]{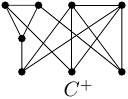} 
\end{minipage}
}
 \subfigure{
\begin{minipage} [t]{0.18\linewidth} \centering
 \includegraphics [scale=1.2]{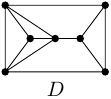} 
\end{minipage}
}

\caption{The graphs $A$, $A^+$, $B$, $B^+$, $C$, $C^+$ and $D$.}
\label{nine graphs}
\end{figure}

For $n \ge 3$, the   {\em wheel} $W_n$   is the join of $K_1$ and $C_{n}$. The vertex of degree $n$ in $W_n$ is a {\em hub}, and its incident edges are {\em spokes}, while the remaining edges that induce a cycle are the {\em rim}. Let 
\[
\mathcal{W} = \{W_n: n \ge 3\}.
\]

\begin{theorem}[\cite{EMOT2-16}]
	\label{thm-characterization}
	A 3-connected graph $G$ is $K_{2,4}$-minor free if and
	only if $G \in \mathcal{W} \cup \mathcal{G} \cup \mathcal{G}'$.
\end{theorem}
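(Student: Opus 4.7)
The plan is to prove the two directions separately. For the ``if'' direction, one checks that each graph in $\mathcal{W} \cup \mathcal{G} \cup \mathcal{G}'$ is 3-connected and has no $K_{2,4}$ minor. For wheels $W_n$, 3-connectivity is immediate and the hub is the unique vertex of degree $\geq 4$, so there is no pair $\{u,v\}$ admitting four internally disjoint paths each containing an internal vertex. For $G_{n,r,s}^{(+)}$, the only plausible pair to play the role of the size-2 side of a prospective $K_{2,4}$ is $\{v_1, v_n\}$ (they are the degree-concentrators of both the spine and the second spine), and a direct count of internally disjoint $v_1$-$v_n$ paths along the two spines together with the at most two edges $v_1v_n$, $v_1v_{n-r}$, $v_n v_{1+s}$ rules out four such paths of length $\geq 2$; checking that no other vertex pair works is routine. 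The graphs in $\mathcal{G}'$ have bounded order and can be verified one by one.

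For the ``only if'' direction I would argue by induction using Tutte's wheel theorem: every 3-connected graph $G$ with at least five vertices that is not a wheel admits an edge $e$ such that $G/e$ or $G \setminus e$ is again 3-connected. If $G$ is $K_{2,4}$-minor free, then so is this reduced graph $G'$, so by induction $G' \in \mathcal{W} \cup \mathcal{G} \cup \mathcal{G}'$. It remains to classify, for each such $G'$, all ways of ``inflating'' a vertex or adding an edge so that the result is 3-connected and still $K_{2,4}$-minor free, and to verify that every such extension is again in our list. The infinite family $\mathcal{G}$ arises naturally: subdividing the spine of a $G_{n,r,s}^{(+)}$ and re-attaching a chord to $v_1$ or $v_n$ yields a 3-connectivity- and minor-freeness-preserving operation that produces the next member of $\mathcal{G}$.

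The key structural tool I would isolate as a lemma is: if vertices $u,v$ of $G$ admit four internally disjoint $u$-$v$ paths each of length at least two, then $G$ has a $K_{2,4}$ minor. Combined with Menger's theorem and 3-connectivity, this forces any 3-connected $K_{2,4}$-minor free $G$ to contain a distinguished pair $\{u,v\}$ of ``high-connectivity'' vertices whose removal splits $G$ into at most three nonseparable ``fans'', each attached to both $u$ and $v$ in a controlled way. This is precisely the two-spine skeleton of $G_{n,r,s}^{(+)}$, while the degenerate configurations (very small fans, or fans meeting at a single hub) produce wheels or one of the sporadic graphs in $\mathcal{G}'$.

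The main obstacle is the case analysis in the inductive step. Several graphs in $\mathcal{G}'$ (such as $K_5^-$, $A$, $A^+$, $B$, $B^+$, $C$, $C^+$, $D$) look like ``near-wheels'' or ``near-members of $\mathcal{G}$'' and one has to check carefully that the reduction operation always lands inside the candidate list rather than producing a genuinely new sporadic graph. Tracking which 3-separations can appear in $G$ and how they interact with the pair of apex vertices is the bulk of the combinatorial work; in particular, one must rule out parallel ``branch gadgets'' of size $\geq 4$, which is what would create the forbidden $K_{2,4}$ minor.
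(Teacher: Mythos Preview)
This theorem is not proved in the present paper: it is quoted verbatim from Ellingham, Marshall, Ozeki and Tsuchiya \cite{EMOT2-16} and used as a black box in the structural description of $2$-connected $K_{2,4}$-minor free graphs. There is therefore no proof in the paper to compare your proposal against.

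As to your sketch itself, the overall shape is reasonable but several steps are more delicate than you suggest. In the ``if'' direction, your argument for $G_{n,r,s}^{(+)}$ is incomplete: the assertion that $\{v_1,v_n\}$ is the only plausible branch pair needs justification, and ``a direct count'' of internally disjoint $v_1$--$v_n$ paths of length $\ge 2$ has to be done carefully, since the relevant obstruction is a $K_{2,4}$ \emph{minor}, not merely four internally disjoint paths (one must also rule out minors obtained by contracting nontrivial branch sets on the degree-$2$ side). In the ``only if'' direction, Tutte's wheel theorem gives that some edge can be contracted \emph{or} deleted while preserving $3$-connectivity, but the inverse operations (vertex splitting and edge addition) are not symmetric, and controlling which of the two occurs is exactly what makes the induction messy; the original proof in \cite{EMOT2-16} is organized rather differently and involves a substantial case analysis that your outline only gestures at. Your ``key structural tool'' is correct and useful, but the passage from ``at most three nonseparable fans'' to the precise list $\mathcal{W}\cup\mathcal{G}\cup\mathcal{G}'$ is where essentially all the work lies, and your proposal does not indicate how the eleven sporadic graphs in $\mathcal{G}'$ are isolated.
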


In \cite{EMOT2-16}, the wheel $W_n$ is denoted by $G_{n+1,1,n-2}^+$, $K_5^-$ is denoted by $G_{5,2,2}^+$, $K_3 \Box K_2$ is denoted by $G_{6,2,2}$ and are put in $\mathcal{G}$. 
For the convenience in the proofs in this paper, we   partition the family of 3-connected $K_{2,4}$-minor free graphs into three subfamiles as above.

\begin{definition}
    \label{def-subdivide}
    For a graph $G \in {\mathcal{G}} \cup \mathcal{G}' \cup \mathcal{W}$, an edge set $F\subseteq E(G)$ is called {\em subdividable} if the graph obtained from $G$ by replacing each edge in $F$ by a path of length $2$ is $K_{2,4}$-minor free.
\end{definition}

The following theorem gives a characterization of 2-connected $K_{2,4}$-minor free graphs. 

\begin{theorem}[\cite{EMOT2-16}]
	\label{thm-allk24minorfree}
	A 2-connected graph $G$ is $K_{2,4}$-minor free if and only if one of the following holds:
	\begin{enumerate}
		\item  $G$ is outerplanar.
\item $G$ is the union of three broken  $x$-$y$-outerplanar graphs $H_1, H_2, H_3$, and possibly the edge $x y$, where $|V(H_i)| \geq 3$ for each $i \in \{1, 2, 3\}$ and $V(H_i) \cap V(H_j)=\{x, y\}$ for any distinct $i, j  \in \{1, 2, 3\}$.
\item $G$ is obtained from a 3-connected $K_{2,4}$-minor free graph $G_0$ by replacing each edge $x_i y_i$ in a (possibly empty) subdividable set of edges $\{x_1 y_1, x_2 y_2, \ldots, x_k y_k\}$ by a two-terminal outerplanar graph $H_i$ with terminal vertices $x_i$ and $y_i$,
such that $V(H_i) \cap V(G_0)=\{x_i, y_i\}$ for each $i \in \{1, \dots , k\}$ and $V(H_i) \cap V(H_j) \subseteq V(G_0)$ for any distinct $i, j  \in \{1, \dots , k\}$.
	\end{enumerate}
\end{theorem}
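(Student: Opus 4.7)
The plan is to prove both directions of the biconditional, using Theorem~\ref{thm-characterization} as a black box for the 3-connected case and reducing the 2-connected case to it via the standard decomposition along 2-cuts.

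\emph{Sufficiency.} I would show that each construction produces a $K_{2,4}$-minor free graph. Case (1) is immediate, since outerplanar graphs are $K_{2,3}$-minor free, hence $K_{2,4}$-minor free. For case (2), observe that in the union $G=H_1\cup H_2\cup H_3$ (possibly $+xy$), any $K_{2,4}$-minor would require two branch vertices with four internally disjoint branch paths between them. Because $\{x,y\}$ is the only separating pair and each broken $x$-$y$-outerplanar graph $H_i+xy$ contains at most two internally disjoint $xy$-paths (else $H_i+xy$ would already contain a $K_{2,3}$-minor and hence fail to be outerplanar), a careful reduction shows the union yields at most three internally disjoint $xy$-paths overall, so no $K_{2,4}$-minor exists. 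Case (3) follows from the definition of \emph{subdividable}: starting from a $K_{2,4}$-minor free 3-connected $G_0$, the hypothesis ensures length-2 subdivision of all edges in $\{x_1y_1,\ldots,x_ky_k\}$ keeps $G$ $K_{2,4}$-minor free, and a two-terminal outerplanar graph with terminals $x_i,y_i$ can be built from a subdivided edge by successively adding outerplanar chords that create no $K_{2,3}$-minor within the gadget, hence no new $K_{2,4}$-minor in the global graph.

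\emph{Necessity.} Let $G$ be 2-connected and $K_{2,4}$-minor free. If $G$ is 3-connected, Theorem~\ref{thm-characterization} places $G$ in $\mathcal{W}\cup\mathcal{G}\cup\mathcal{G}'$, which is case (3) with empty subdividable set. Otherwise $G$ has a 2-cut $\{x,y\}$, and I would study the $xy$-bridges of $G$. By Menger's theorem and $K_{2,4}$-minor-freeness there are at most three internally disjoint $xy$-paths, so at most three nontrivial bridges plus possibly the edge $xy$. If exactly three nontrivial bridges $H_1,H_2,H_3$ occur, each $H_i$ must be a broken $x$-$y$-outerplanar graph: if some $H_i+xy$ contained a $K_{2,3}$-minor, combining it with one $xy$-path chosen inside each of the other two bridges would produce a $K_{2,4}$-minor in $G$. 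This yields case (2). If only one or two nontrivial bridges appear at every 2-cut, I would contract each nontrivial bridge to a single virtual edge between its terminals to obtain a graph $G_0$; iterate until no 2-cut remains. The resulting $G_0$ is either a cycle (in which case the original $G$ is outerplanar, giving case (1)) or is 3-connected and $K_{2,4}$-minor free, so Theorem~\ref{thm-characterization} applies; the contracted edges then form the subdividable set of case (3), with the original bridges serving as the two-terminal outerplanar gadgets $H_i$.

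The main obstacle will be the final verification of subdividability. One must go through the three families $\mathcal{W}$, $\mathcal{G}$, $\mathcal{G}'$ and decide, for each edge, whether subdividing it creates a $K_{2,4}$-minor. For $\mathcal{G}'$ this is a finite but tedious enumeration on eleven small graphs; for $\mathcal{W}$ and $\mathcal{G}$ one must identify structural features preserved under subdivision (spokes versus rim edges in $W_n$; spine, second-spine, and chord edges in $G_{n,r,s}^{(+)}$) and rule out the bad subdivisions by exhibiting $K_{2,4}$-models directly. A secondary technical point is ensuring the contraction procedure in the necessity argument terminates in a \emph{single} $G_0$ rather than producing multiple 3-connected pieces; handling that requires showing that after all bridge-contractions the remaining graph has no further 2-cuts, which follows from maximality of the bridge decomposition together with the fact that case (2) is exclusive of the 3-connected-gadget scenario of case (3).
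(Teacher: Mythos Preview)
The paper does not prove this theorem; it is quoted verbatim from Ellingham, Marshall, Ozeki, and Tsuchiya \cite{EMOT2-16} and used as a black box, so there is no in-paper proof to compare your proposal against.

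That said, your sketch has genuine gaps that would have to be closed before it became a proof. In the sufficiency argument for case~(2) you only bound the number of internally disjoint $x$--$y$ paths, but a $K_{2,4}$-minor in $G$ need not have $x$ and $y$ (or branch sets containing them) as its two degree-$4$ vertices; the branch sets could sit inside a single $H_i$ with some of the four connecting paths detouring through the other $H_j$'s, and your Menger count at $\{x,y\}$ does not exclude this. For case~(3), Definition~\ref{def-subdivide} only guarantees that a \emph{single} subdivision of each edge in the set is safe; the jump to ``replace by an arbitrary two-terminal outerplanar gadget'' is exactly the content one has to prove, and ``successively adding outerplanar chords creates no $K_{2,3}$-minor within the gadget, hence no new $K_{2,4}$-minor globally'' is a non-sequitur---the danger is a minor that uses both terminals and pieces of $G_0$ simultaneously, not a minor confined to the gadget.

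In the necessity direction, your ``contract bridges and iterate until no 2-cut remains'' is really an appeal to Tutte's 3-block (SPQR) decomposition, but you then assume the outcome is a \emph{single} 3-connected $G_0$. In general the 3-block tree of a 2-connected graph can have many 3-connected pieces; showing that $K_{2,4}$-minor-freeness forces at most one non-trivial 3-connected block, with every other block a two-terminal outerplanar graph attached along a \emph{subdividable} edge, is precisely the structural heart of the Ellingham et~al.\ result, not a routine consequence of iterating bridge contractions. Your final paragraph acknowledges this difficulty but does not resolve it.
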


To complete the characterization, it remains to specify the subdividable edge sets for graphs in $\mathcal{W} \cup \mathcal{G} \cup \mathcal{G}'$. It suffices to state the (inclusion-wise) maximal subdividable sets since an edge set is subdividable if and only if it is contained in some maximal subdividable edge set.
For the sake of concise proofs, we only consider maximal subdividable sets up to  automorphism.

\begin{lem}[\cite{EMOT2-16}]
	\label{lem-sub}
 The maximal subdividable sets of edges of graphs in $\mathcal{G}\cup\mathcal{G}' \cup \mathcal{W}$, up to automorphism,  are given as below. Without loss of generality, when considering $G_{n, r, s}^{(+)} \in {\mathcal{G}}$, we assume that $r \leq s$. 
 
    \begin{enumerate}
        \item   $W_n$ has one maximal subdividable edge set, consisting of the rims and one spoke, except that $W_4$ has two other  maximal subdividable edge sets as illustrated in Table~\ref{tab-max-sub}.
       
        \item Graphs in $\mathcal{G}$ have  one maximal subdividable edge set which is the edge set of the spine, with the following exceptions:
    \begin{itemize}
        \item $G_{n, 2, n-3} \in \mathcal{G}$ with $n\ge6$ and $G_{n, 2, n-4}^{+} \in \mathcal{G}$ with $n \ge 7$ have another maximal subdividable edge set which is the edge set  of the second spine. 
    \item $G_{7,2,3}$ has another maximal subdividable edge set which is $\{v_1v_2,v_4v_5,v_6v_7,v_3v_7\}$.
    \end{itemize}   
   \item $K_5$ has no subdividable edge, and the maximal subdividable edge sets of other graphs in $\mathcal{G}'$ are given in Table~\ref{tab-max-sub}. 
    \end{enumerate}
\end{lem}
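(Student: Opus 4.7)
The plan is to proceed by structural case analysis on $G \in \mathcal{W} \cup \mathcal{G} \cup \mathcal{G}'$. For each graph and each candidate edge set $F$, two verifications are required: \textbf{(i)} subdividability, i.e., the graph $G_F$ obtained from $G$ by subdividing every edge in $F$ is $K_{2,4}$-minor free; and \textbf{(ii)} maximality, i.e., for every edge $e \in E(G) \setminus F$, the graph $G_{F \cup \{e\}}$ contains a $K_{2,4}$-minor. The maximal subdividable sets are then precisely the $F$ passing (i) for which no extension also passes (i); by general principles, every subdividable edge set is contained in such a maximal one, so it suffices to exhibit them all up to automorphism.

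For (i), the main tool is Theorem~\ref{thm-allk24minorfree}. After subdivision, $G_F$ is still $2$-connected and possesses a rigid structure: in the wheel case it has a unique high-degree vertex (the hub) with all other vertices on chains, and in the $\mathcal{G}$ case it has two high-degree vertices $v_1, v_n$ with the remaining vertices on subdivided chords or the subdivided spine. In both cases one can exhibit $G_F$ as falling into part (1) (outerplanar) for $W_n$ with rims and one spoke subdivided, or into part (2) (a union of broken $v_1$-$v_n$-outerplanar graphs, together with the edge $v_1 v_n$ in the ``$+$'' case) for the spine-subdivided members of $\mathcal{G}$. The exceptional sets for $G_{n,2,n-3}$, $G_{n,2,n-4}^+$, and $G_{7,2,3}$ are handled by reinterpreting the second spine (respectively the explicit four-edge set) as a spanning path witnessing an alternative two-terminal outerplanar decomposition and re-running the same argument.

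For (ii), the key geometric fact is that a $K_{2,4}$-minor requires two branch vertices $x, y$ joined by four internally disjoint $x$-$y$-paths. In $W_n$, any second subdivided spoke $v_0 v_j$ supplies such a configuration with $\{x,y\} = \{v_0, v_j\}$ and the four paths routed via the two subdivision vertices on the two subdivided spokes and the two arcs of the subdivided rim. In $G_{n,r,s}^{(+)}$, subdividing any chord creates a fourth internally disjoint $v_1$-$v_n$ path through the new subdivision vertex; combined with three paths routed through the remaining chords and spine this yields a $K_{2,4}$-minor with branch vertices $\{v_1, v_n\}$. Analogous explicit minor constructions rule out all edges outside the claimed maximal sets in the small graphs of $\mathcal{G}'$.

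The main obstacle is the bookkeeping in the exceptional cases: the graphs in $\mathcal{G}'$, the wheel $W_4$ with two additional maximal subdividable sets, the extra maximal sets for $G_{n,2,n-3}$, $G_{n,2,n-4}^+$, and $G_{7,2,3}$, and the asymmetric role of the two spines in those exceptions. For each such graph, the full automorphism group must be determined, the edge orbits enumerated, each combination of orbits tested individually both for subdividability (by exhibiting a decomposition as in Theorem~\ref{thm-allk24minorfree}) and for maximality (via an explicit $K_{2,4}$-minor witness), and finally the resulting list pruned modulo automorphism. While each individual check is elementary, the number of cases is substantial and the careful organization needed to avoid omissions is where the real work lies, since there is no single conceptual shortcut bypassing the enumeration.
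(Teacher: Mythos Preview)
Your proposal sketches a reasonable approach to proving this classification from scratch, but the paper does not prove this lemma at all: it is quoted verbatim from Ellingham, Marshall, Ozeki, and Tsuchiya~\cite{EMOT2-16}, as indicated by the citation attached to the lemma statement. No argument is given or needed in the present paper; the result is imported as a black box and then applied in the subsequent case analysis of Section~5.

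So there is nothing to compare your outline against within this paper. If you intend to supply an independent proof, your plan is broadly sound---use Theorem~\ref{thm-allk24minorfree} to certify subdividability of each candidate $F$, and exhibit an explicit $K_{2,4}$-minor to rule out each extension---but be aware that the actual verification, especially for the eleven sporadic graphs in $\mathcal{G}'$ and the exceptional cases $W_4$, $G_{n,2,n-3}$, $G_{n,2,n-4}^+$, $G_{7,2,3}$, is a lengthy enumeration carried out in~\cite{EMOT2-16}, and your sketch does not yet engage with any of its genuine difficulties (for instance, determining the automorphism groups and edge orbits of each small graph, or verifying that the second-spine sets really are inequivalent to the spine sets under automorphism). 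For the purposes of this paper, simply citing~\cite{EMOT2-16} is the correct and complete ``proof.''
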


\begin{table}[h]     
\centering                
\caption{Maximal subdividable sets of some small graphs.}  
\vspace{0.15cm}      
\label{tab-max-sub}               
\begin{tabular}{|c|c|}
\hline
{Graph} &  {Maximal subdividable sets} \\ \hline \hline
 {$W_4$} &  \begin{minipage}[b]{0.4\columnwidth}
		\centering
		\raisebox{-.5\height}{\includegraphics[scale=1.3]{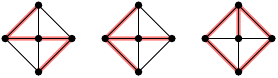}}
	\end{minipage}
 \\ \hline
 {$K_5^-$}  & \begin{minipage}[b]{0.3\columnwidth}
		\centering
		\raisebox{-.5\height}{\includegraphics[scale=1.3]{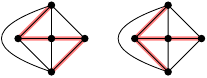}}
	\end{minipage}
 \\ \hline
{$K_3 \square K_2$}  & \begin{minipage}[b]{0.3\columnwidth}
		\centering
		\raisebox{-.5\height}{\includegraphics[scale=1.3]{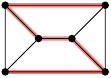}}
	\end{minipage} 
 \\ \hline
{$K_{3,3}$, $A$}  & \begin{minipage}[b]{0.3\columnwidth}
		\centering
		\raisebox{-.5\height}{\includegraphics[scale=1.3]{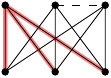}}
	\end{minipage}
 \\ \hline
\end{tabular}
\begin{tabular}{|c|c|}
\hline
{Graph} &  {Maximal subdividable sets} \\ \hline \hline
 {$A^+$} & \begin{minipage}[b]{0.3\columnwidth}
		\centering
		\raisebox{-.5\height}{\includegraphics[scale=1.3]{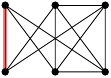}}
	\end{minipage} 
 \\ \hline

{$B$, $B^+$}  & \begin{minipage}[b]{0.3\columnwidth}
		\centering
		\raisebox{-.5\height}{\includegraphics[scale=1.3]{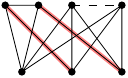}}
	\end{minipage}
 \\ \hline
 
  {$C$, $C^+$} & \begin{minipage}[b]{0.3\columnwidth}
		\centering
		\raisebox{-.5\height}{\includegraphics[scale=1.3]{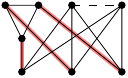}}
	\end{minipage} 
 \\ \hline
 
{$D$}  & \begin{minipage}[b]{0.3\columnwidth}
		\centering
		\raisebox{-.5\height}{\includegraphics[scale=1.3]{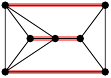}}
	\end{minipage}
 \\ \hline
\end{tabular}
\end{table}

\section{Union of two or three $x$-$y$-outerplanar graphs}

Assume $G$ is a 2-connected $K_{2,4}$-minor free simple graph which is neither a cycle nor a complete graph,  and $(L,M)$ is a simple $f$-cover of $G$, where $f(v) = \min \{5, d_G(v)\}$ for each vertex $v \in V(G)$. We shall prove that $G$ is $(L,M)$-colourable.

Theorem~\ref{thm-allk24minorfree} partitions the family of 2-connected $K_{2,4}$-minor free graphs into three subfamilies. The first two subfamilies consist of outerplanar graphs (formed by the union of two broken $x$-$y$-outerplanar graphs) and graphs that are unions of three non-trivial broken $x$-$y$-outerplanar graphs possibly with the edge $xy$, respectively. In this section we prove Theorem~\ref{thm-main} for these two subfamilies.

Recall that $G$ is obtained from a broken $x$-$y$-outerplanar graph $H$ by adding the edge $e=xy$. If each vertex of $G$ has degree at most $5$, then $G$ is degree DP-colourable, and hence $5$-truncated-degree DP-colourable (as $G$ is neither a cycle nor a complete graph). Otherwise, we may assume that $d_G(x) \ge 5$ and hence $|L(x)|=5$. 

As the restriction of $(L,M)$ to $H$ is a valid cover of $H$, 
by Lemma~\ref{key-lemma}, $H$ has a coding $\Bar{M}_{xy}$ which is a subgraph of $K_{2,2}$, and hence has at most 4 links. Since $M_e$ is a matching, $M_e \cup \Bar{M}_{xy}$ is not a complete bipartite graph. Choose $a \in L(x)$ and $b \in L(y)$ such that $ab \notin M_e \cup \Bar{M}_{xy}$. Then the colouring $\phi$ of $\{x,y\}$ defined as $\phi(x)=a$ and $\phi(y)=b$ can be extended to an $(L,M)$-colouring of $G$. 

Next we consider the case that  $G$ is the union of three non-trivial broken $x$-$y$-outerplanar graphs $H_1,H_2,H_3$, and possibly the edge $xy$.

For each $i \in \{1,2,3\}$, the restriction of $(L,M)$ to $H_i$ is valid (as $H_i$ is non-trivial). Therefore, by Lemma~\ref{key-lemma}, there is a coding $\Bar{M}_{xy,i}$ for the restriction of $(L,M)$ to $H_i$.  
For $i=1,2,3$, let 
\[
d_i= \begin{cases} 1, &\text{if $\Bar{M}_{xy,i}$ has more than 2 links}, \cr
0, &\text{otherwise}.
\end{cases}
\]
Let $d_0=1$ if $e=xy$ is an edge of $G$, and $d_0=0$ otherwise. By Lemma~\ref{key-lemma}, $d_G(x)=d_0+\sum_{i=1}^{3}d_{H_i}(x)\ge d_0+\sum_{i=1}^{3}\lambda_{\Bar{M}_{xy, i}}(x)\ge d_0+\sum_{i=1}^{3}(d_i+1)=d_0+d_1+d_2+d_3 + 3$. Similarly, we have $d_G(y) \ge d_0+d_1+d_2+d_3 + 3$.

Suppose $e=xy$ is an edge of $G$. 
If $d_1+d_2+d_3 =0$, then 
$|L(x)|\ge \min\{d_G(x), 5\} \ge 4$ and $|L(y)|  \ge \min\{d_G(y),5\} \ge 4$.
If $d_1+d_2+d_3 \ge 1$, then $|L(x)|\ge \min\{d_G(x), 5\} = 5$ and $|L(y)|  \ge \min\{d_G(y),5\} = 5$.
In either case, a straightforward counting shows that the number of links in $\bigcup_{i=1}^3\Bar{M}_{xy,i} \cup M_e$ is less than $|L(x)||L(y)|$. Hence $\bigcup_{i=1}^3\Bar{M}_{xy,i} \cup M_e$ is not a complete bipartite graph and there exist $a \in L(x)$ and $b \in L(y)$ such that $ab \notin \bigcup_{i=1}^3\Bar{M}_{xy,i} \cup M_e$ (note that $M_e$ is a matching containing at most $|L(x)|$ edges). Thus the colouring $\phi$ of $\{x,y\}$ defined as $\phi(x)=a$ and $\phi(y)=b$ can be extended to an $(L,M)$-colouring of $G$. 

Suppose $e=xy$ is not an edge of $G$. Similarly as above, depending on whether $d_1+d_2+d_3 =0$ or $d_1+d_2+d_3 \ge 1$, one can readily show that the number of links in $\bigcup_{i=1}^3\Bar{M}_{xy,i}$ is less than $|L(x)||L(y)|$. Hence, the colouring $\phi$ of $\{x,y\}$ defined as $\phi(x)=a$ and $\phi(y)=b$ satisfying $a \in L(x)$, $b \in L(y)$ and $ab \notin \bigcup_{i=1}^3\Bar{M}_{xy,i}$ can be extended to an $(L,M)$-colouring of $G$.

\section{Graphs obtained from 3-connected $K_{2,4}$-minor free graphs}

To complete the proof of Theorem~\ref{thm-main}, it suffices to consider the remaining case that there is a 3-connected $K_{2,4}$-minor free graph $G_0$ and a subdividable edge set $F=\{x_iy_i: i=1,2,\ldots, k\}$ of $G_0$ such that $G$ is obtained from $G_0$ by replacing each edge $x_iy_i \in F$ by a non-trivial two-terminal outerplanar graph with terminal vertices $x_i$ and $y_i$.

Given a simple $f$-cover of $G$ with $f(v) = \min\{5, d_G(v)\}$, we define a cover $(L',M')$ of $G_0$ as follows.
Define $L'$ to be the restriction of $L$ to $V(G_0)$. For each edge $e \in E(G_0)$, define $M'_e$ as follows:
\begin{enumerate}
    \item If $e \in E(G_0)-F$,  set $M'_e=M_e$.
    \item If $e=x_iy_i$ is replaced by a broken $x_i$-$y_i$-outerplanar graph $H_i$, set $M'_e$ to be a coding $\Bar{M}_{x_iy_i}$ of $(L,M)|_{H_i}$ that is a subgraph of $K_{2,2}$. 
    \item If $e=x_iy_i$ is replaced by an $x_i$-$y_i$-outerplanar graph $H_i$, set $M'_e = M_e \cup  \Bar{M}_{x_iy_i}$, where $\Bar{M}_{x_iy_i}$ is a coding of $(L,M)|_{H_i-e}$. 
\end{enumerate}
Note that for each edge $e \in E(G_0)-F$, $M'_e$ is a matching, and for each edge $e \in F$, $M'_e$ is either a matching, or a subgraph of $K_{2,2}$, or a matching plus a subgraph of $K_{2,2}$. 

By Lemma~\ref{key-lemma}, instead of showing that $G$ is $(L,M)$-colourable, it suffices to show that $G_0$ is  $(L',M')$-colourable. 

Moreover, it follows from Lemma~\ref{key-lemma} that for any edge $e=x_iy_i \in F$ and any $v \in \{x_i, y_i\}$, $d_{H_i}(v) \ge \lambda_{M'_e}(v)$.
Therefore, for any $v \in V(G_0)$, we have
$$|L(v)| \ge \min\{5, d_G(v)\} \ge \min\{5, \sum_{e \in E_{G_0}(v)}\lambda_{M'_e}(v)\}.$$

\begin{definition}
    Let $G_0$ be a 3-connected $K_{2,4}$-minor free graph with a subdividable edge set $F \subseteq E(G_0)$. A cover $(L,M)$ of $G_0$ is \emph{$F$-valid} if it satisfies the following properties:
    \begin{enumerate}
        \item For any $e \in E(G_0)-F$, $M_e$ is a matching.
        \item For any $e \in F$, $M_e$ is either a matching, or a subgraph of $K_{2,2}$, or the union of a matching and a subgraph of $K_{2,2}$.
        \item For any $v \in V(G_0)$, $|L(v)| \ge \min \{5, \sum_{e \in E_{G_0}(v)}\lambda_{M_e}(v)\}$.
        \item If $G_0$ is a complete graph, then there is at least one edge $e\in E(G_0)$ for which $M_e$ is not a perfect matching.
    \end{enumerate}
\end{definition}

By the discussion above, the last case of  Theorem~\ref{thm-main} follows from the following Theorem.

\begin{theorem}
    \label{thm-3connected}
    Let $G_0$ be a 3-connected $K_{2,4}$-minor free graph and $(L,M)$ be an $F$-valid cover of $G_0$, where $F$ is a subdividable edge set  of $G_0$.
    Then $G_0$ is $(L,M)$-colourable.
\end{theorem}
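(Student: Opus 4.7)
The plan is to apply Theorem~\ref{thm-characterization}, splitting the argument into three cases according to whether $G_0$ belongs to $\mathcal{W}$, $\mathcal{G}$, or $\mathcal{G}'$. For each case, the strategy is a greedy $(L,M)$-colouring along a carefully chosen vertex ordering. The key structural input, provided jointly by $F$-validity and Lemma~\ref{lem-sub}, is that edges outside $F$ carry matchings (so $\lambda_{M_e}(v)=1$) while only an explicitly described subset of edges may be ``fat''. Combined with Lemma~\ref{lem-2colours}, which limits how many nodes in $L(u)$ can have large neighbourhood in $M_{uv}$, this lets us pick an early colour at $u$ so as not to over-constrain the later colour at $v$.

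For $G_0 = W_n \in \mathcal{W}$, the plan is to colour the hub $h$ first and then the rim cycle. By Lemma~\ref{lem-sub}, at most one spoke lies in the subdividable set, so colouring $h$ forbids at most one colour at each rim vertex, with one possible exception (where the fat spoke may forbid up to three); in that exceptional case, $|L(v)| \ge 5$ by $F$-validity. The residual cover on the rim cycle has list sizes at least the residual degrees; to finish I would appeal to a cycle version of Theorem~\ref{thm-DPdegree}, using the slack provided by the extra capacity at the fat-spoke endpoint and by the free initial choice of $\phi(h)$ to avoid the GDP-cycle obstruction. The small wheel $W_4$ with its two additional maximal subdividable configurations will be handled separately.

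For $G_0 = G_{n,r,s}^{(+)} \in \mathcal{G}$, the apex vertices $v_1$ and $v_n$ are the only ones of degree greater than $4$, and every chord incident to them lies outside the standard spine subdividable set, so those chord edges carry matchings. The plan is to colour $v_1$ and $v_n$ first (respecting $M_{v_1v_n}$ if present) and then colour the spine vertices $v_2,\ldots,v_{n-1}$ in order. At each interior $v_i$, the already-coloured neighbours are $v_{i-1}$, plus possibly $v_1$ and $v_n$; the only possibly fat edge among them is the spine edge $v_{i-1}v_i$. To guarantee a free colour at $v_i$, when choosing $\phi(v_{i-1})$ I would pick a node in $L(v_{i-1})$ that is \emph{not} of large $M_{v_{i-1}v_i}$-degree (possible by Lemma~\ref{lem-2colours}, up to at most two exceptional nodes), ensuring that $\phi(v_{i-1})$ forbids at most one colour at $v_i$. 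The alternate maximal sets for $G_{n,2,n-3}$, $G_{n,2,n-4}^+$, and $G_{7,2,3}$ require replacing the spine by the second spine (or the corresponding reordering in the last case), but the argument is parallel.

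For the finite family $\mathcal{G}'$, I would treat each of the eleven graphs individually using the maximal subdividable sets listed in Table~\ref{tab-max-sub}. The degenerate case $K_5$ is handled via the fourth clause of $F$-validity, which provides an edge $e=uv$ with $M_e$ not a perfect matching and hence a pair of unmatched nodes to start the colouring from. I expect the main obstacle to be the seven graphs $A, A^+, B, B^+, C, C^+, D$: they contain several vertices of degree $4$ and concentrated fat edges, so the greedy argument has little slack, and each case requires a tailored ordering together with a judicious first choice at the endpoint of a fat edge, again using Lemma~\ref{lem-2colours} to avoid the worst-case forbidden contribution. Individually, none of these cases is deep, but accommodating all structural subcases will form the bulk of the work.
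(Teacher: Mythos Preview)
Your overall plan (split via Theorem~\ref{thm-characterization}, greedy colouring along a tailored order, use Lemma~\ref{lem-2colours} to control the damage of each committed colour) matches the paper's scaffolding. But there is a genuine gap that your sketch does not close: the greedy argument breaks down at vertices of degree~$3$ whose two $F$-edges both have $\lambda$-weight~$3$. Concretely, take $G_0=W_n$ with $F$ the standard set (all rim edges plus the spoke at $v_1$) and suppose at some rim vertex $v_i$ ($i\ge2$) both incident rim edges $e,e'$ have $M_e,M_{e'}$ equal to a matching plus a $K_{2,2}$, so $\lambda_{M_e}(v_i)=\lambda_{M_{e'}}(v_i)=3$. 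Then $|L(v_i)|=5$, but after you colour the hub and delete at most one node from $L(v_i)$, the residual list has size~$4$ while the residual weighted degree on the rim cycle is~$6$. No ``cycle version of Theorem~\ref{thm-DPdegree}'' applies here, and the slack you hope to get from the hub choice (which can save at most one extra colour at a single rim vertex) does not propagate to every such $v_i$. The same obstruction arises on the spine in the $\mathcal{G}$ case.

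The paper does not try to push the greedy argument through this obstacle directly. Instead it argues by minimal counterexample, which buys two structural lemmas you are missing: one (Lemma~\ref{obs-2}) forbids the pattern $\lambda=3,1,1$ at any degree-$3$ vertex with a non-$F$ edge and weighted degree $\ge5$, and another (Lemma~\ref{lem-degree-DP}) guarantees a vertex with $|L(v)|=5<\lambda_{(L,M)}(v)$. It then proves a path-colouring lemma (Lemma~\ref{lem-path}) that only needs $|L'(u)|\ge\min\{4,\sum\lambda\}$ at interior vertices---precisely calibrated to absorb the $\lambda_1+\lambda_2\in\{5,6\}$ shortfall---and always peels off \emph{two} vertices (e.g.\ the hub together with a well-chosen rim vertex) so that what remains is a path, not a cycle. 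Your plan to colour the hub (or both apices) and then handle a residual \emph{cycle} (or the full spine) in one sweep is exactly what fails; you need either the $\min\{4,\cdot\}$ path lemma or an equivalent device, and you need the minimal-counterexample reductions to keep the case analysis finite.
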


The remainder of the paper is devoted to the proof of Theorem \ref{thm-3connected}.

Suppose  to the contrary of Theorem~\ref{thm-3connected}, there exists a 3-connected $K_{2,4}$-minor free graph $G_0$ with a subdividable set $F \subseteq E(G_0)$ and an $F$-valid cover $(L,M)$, such that $G_0$ has no $(L,M)$-colouring. We choose a counterexample so that $|F|$ is minimum, and subject to this, $\sum_{v \in V(G_0)} |L(v)|$ is minimum. In particular, $|L(v)| = \min \{5, \sum_{e \in E_{G_0}(v)}\lambda_{M_e}(v)\} \le 5$ for each vertex $v \in V(G_0)$.
We shall derive a sequence of properties of $G_0$ that lead to a contradiction. 

There are three subfamilies of 3-connected $K_{2,4}$-minor free graphs, and graphs in $\mathcal{G}'$ do not have much structure in common, and each has a few maximal subdividable edge sets that need to be treated separately. Hence
the proof is not short. However, the general idea is simple: We colour one or two vertices $v$ of $G_0$ carefully so that some neighbour(s) of $v$ will lose one or no colour and one of the following is true:
\begin{enumerate}
    \item The remaining graph is a path that can be recursively coloured by the remaining colours in their lists.
    \item The remaining vertices can be ordered so that for $i \ge 0$, after removing the first $i$ vertices, the $(i+1)$-th vertex $u_{i+1}$ has more remaining colours than its remaining weighted degree.
    Thus all remaining vertices can be removed iteratively (cf.\ Lemma~\ref{obs-1}). This means that $G_0$ has an $(L,M)$-colouring, which is a contradiction. 
\end{enumerate}

In this section, we first prove some lemmas about conditions under which vertices can be removed, about paths with given lists that can be coloured recursively, and about vertices that can be coloured carefully so that some of its neighbour(s) will lose no (or one) colour. Subsequently, in three subsections, we apply these lemmas to graphs in each of the three subfamilies of graphs.

We note that Lemma~\ref{lem-2colours} can be easily extended to the context of $G_0$ and the $F$-valid cover $(L, M)$ of $G_0$. The details are left to the reader.

Recall that we view $(L, M)$ as a graph. When a cover $(L', M')$ of a subgraph $G'$ of $G_0$ is a subgraph of $(L, M)$, we write $(L', M') \subseteq (L, M)$.
  
    \begin{lem}
    \label{obs-1}
        Let $G'$ be a subgraph of $G_0$ and $(L',M') \subseteq (L, M)$ be a cover of $G'$. Let $v \in V(G')$ such that $|L'(v)| > \sum_{e \in E_{G'}(v)}\lambda_{M'_e}(v)$. Then $G'$ is $(L',M')$-colourable if and only if $G'-v$ is $(L',M')|_{G'-v}$-colourable.  
    \end{lem}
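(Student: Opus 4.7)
The forward direction is immediate: any $(L', M')$-colouring of $G'$, when restricted to $V(G') \setminus \{v\}$, yields an $(L', M')|_{G'-v}$-colouring of $G' - v$. So all the content lies in the reverse direction.

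For the reverse direction, the plan is to take an arbitrary $(L', M')|_{G'-v}$-colouring $\phi$ of $G' - v$ and extend it to $v$ by a direct counting argument. For each edge $e = uv \in E_{G'}(v)$, the colour $\phi(u) \in L'(u)$ forbids exactly the nodes of $N_{M'_e}(\phi(u)) \subseteq L'(v)$ from being used at $v$. By Lemma~\ref{lem-2colours} (applied in the extended form the authors announce just before the statement of Lemma~\ref{obs-1}, valid because $(L', M') \subseteq (L, M)$ inherits the local $M_e$-structure of an $F$-valid cover), we have $|N_{M'_e}(\phi(u))| \le \lambda_{M'_e}(v)$ for every neighbour $u$ of $v$ in $G'$. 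Summing gives
\[
\Bigl| \bigcup_{e \in E_{G'}(v)} N_{M'_e}(\phi(u_e)) \Bigr| \;\le\; \sum_{e \in E_{G'}(v)} \lambda_{M'_e}(v) \;<\; |L'(v)|,
\]
where $u_e$ denotes the endpoint of $e$ other than $v$. Hence there exists $c \in L'(v)$ that is not forbidden by any $\phi(u_e)$, and setting $\phi(v) = c$ produces an $(L', M')$-colouring of $G'$.

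I do not expect any real obstacle: the only point worth checking carefully is that the bound $|N_{M'_e}(a)| \le \lambda_{M'_e}(v)$ from Lemma~\ref{lem-2colours} transfers to this setting. This is straightforward because $\lambda_{M'_e}(v)$ depends only on whether $M'_e$ is a matching, a subgraph of $K_{2,2}$, or a union of both, and an $F$-valid cover (and therefore any subcover $(L', M') \subseteq (L, M)$) guarantees exactly these shapes for each $M'_e$. The argument uses nothing about $K_{2,4}$-minor freeness, the subdividable set $F$, or the minimality of the counterexample, so it functions purely as the vertex-deletion tool that Section~5 repeatedly invokes.
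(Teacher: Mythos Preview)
Your proposal is correct and follows essentially the same argument as the paper: take an $(L',M')|_{G'-v}$-colouring $\phi$ of $G'-v$, bound the number of nodes in $L'(v)$ adjacent to $\{\phi(u): u \in N_{G'}(v)\}$ by $\sum_{e \in E_{G'}(v)}\lambda_{M'_e}(v)$ via Lemma~\ref{lem-2colours}, and extend $\phi$ to $v$ using a remaining node. The paper omits the trivial forward direction you include, but otherwise the proofs coincide.
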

    \begin{proof}
         If there is an $(L',M')|_{G'-v}$-colouring $\phi$ of $G'-v$, then at most $\sum_{e \in E_{G'}(v)}\lambda_{M'_e}(v)$ nodes in $L'(v)$ join to $\{\phi(u) : u \in N_{G'}(v)\}$. Hence   we can extend $\phi$ to an $(L',M')$-colouring of $G'$ by assigning $\phi(v) \in L'(v)$ that is not adjacent to any node in $\{\phi(u) : u \in N_{G'}(v)\}$.
    \end{proof}

\begin{definition}
    Let $G'$ be a subgraph of $G_0$ and $(L',M') \subseteq (L, M)$ be a cover of $G'$. We say a 
 sequence $(u_1, \dots, u_k)$ of vertices of $V(G')$  is {\em removable} if for $i=1,\ldots,k$,
$|L'(u_i)| > \sum_{e \in E_{G'_{i}}(u_i)}\lambda_{M'_e}(u_i)$, where $G'_i=G' - \{u_1, \dots, u_{i-1}\}$ for $i=1,\ldots, k$. A vertex is called removable if it is contained in some removable sequence.  
\end{definition}

It follows from Lemma~\ref{obs-1} that if a sequence $(u_1,\ldots, u_k)$ of vertices of $V(G')$ is  removable  with respect to $(L',M')$, then $G'$ is $(L',M')$-colourable if and only if $G'-\{u_1,\ldots, u_k\}$ is $(L',M')|_{G'-\{u_1,\ldots, u_k\}}$-colourable.

 \begin{lem}
   \label{lem-remove} 
    Let $G'$ be a subgraph of $G_0$ with $u_1 u_2 \in E(G')$ and $(L',M') \subseteq (L, M)$ be a cover of $G'$. Assume one of the following holds:
    \begin{enumerate}
        \item $|L'(u_2)| \ge   d_{G'}(u_2)$ and  $E_{G'-u_1}(u_2) \cap   F = \emptyset$.
        \item $|L'(u_2)| \ge \min\{d_{G'}(u_2)+2, \sum_{e \in E_{G'}(u_2)} \lambda_{M'_e}(u_2) \}$ and $|E_{G'-u_1}(u_2) \cap F| \le 1$.
    \end{enumerate} 
    If $u_1$ is removable with respect to $(L',M')$, then so is $u_2$.
   \end{lem}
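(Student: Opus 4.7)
The plan is to extend a witness of removability for $u_1$ by appending $u_2$ at its end, exploiting the fact that removing $u_1$ destroys the edge $u_1u_2$ and thereby decreases the weighted degree of $u_2$ by $\lambda_{M'_{u_1u_2}}(u_2)\ge 1$. Concretely, fix a removable sequence $(v_1,\dots,v_k)$ with $u_1=v_j$. If $u_2\in\{v_1,\dots,v_k\}$ then $u_2$ already lies in a removable sequence and we are done, so assume otherwise. Every prefix of a removable sequence is plainly again removable, so it will suffice to verify the removability inequality for $u_2$ at position $j+1$ in the residual graph $G'_{j+1}:=G'-\{v_1,\dots,v_j\}$, namely
\[
|L'(u_2)| > \sum_{e\in E_{G'_{j+1}}(u_2)}\lambda_{M'_e}(u_2);
\]
then $(v_1,\dots,v_j,u_2)$ is the desired removable sequence containing $u_2$. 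Note that since $u_1=v_j$ has been removed, $E_{G'_{j+1}}(u_2)\subseteq E_{G'-u_1}(u_2)$, and in particular $|E_{G'_{j+1}}(u_2)|\le d_{G'}(u_2)-1$.

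In Case~1 no edge of $E_{G'_{j+1}}(u_2)$ belongs to $F$, so every $M'_e$ is a matching and $\lambda_{M'_e}(u_2)=1$; the sum is therefore at most $d_{G'}(u_2)-1<d_{G'}(u_2)\le|L'(u_2)|$. In Case~2 at most one edge of $E_{G'_{j+1}}(u_2)$ lies in $F$, contributing at most $3$ to the weighted degree, while every other edge contributes exactly $1$. If $|L'(u_2)|\ge d_{G'}(u_2)+2$, the sum is bounded by $(d_{G'}(u_2)-1)+2=d_{G'}(u_2)+1<|L'(u_2)|$. If instead $|L'(u_2)|\ge \sum_{e\in E_{G'}(u_2)}\lambda_{M'_e}(u_2)$, then deleting the edge $u_1u_2$ already lowers that sum by at least $\lambda_{M'_{u_1u_2}}(u_2)\ge 1$, giving
\[
\sum_{e\in E_{G'_{j+1}}(u_2)}\lambda_{M'_e}(u_2)\le \sum_{e\in E_{G'}(u_2)}\lambda_{M'_e}(u_2)-1<|L'(u_2)|.
\]

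The lemma is essentially bookkeeping: the loss of a single incident edge at $u_2$, even one from $F$ with $\lambda\le 3$, is compensated by the slack built into either of the two hypotheses on $|L'(u_2)|$. The only mild subtlety is that $u_1$ need not appear at the start of its witnessing sequence, which is why I truncate that sequence just after $u_1$ before appending $u_2$; I do not anticipate any genuine obstacle.
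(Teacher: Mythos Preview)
Your proof is correct and follows essentially the same approach as the paper's. The paper's argument is terser: it simply asserts that it suffices to prove $|L'(u_2)| > \sum_{e \in E_{G'-u_1}(u_2)}\lambda_{M'_e}(u_2)$ and then verifies this inequality via the same case analysis you give, without spelling out the truncation of the removable sequence after $u_1$ or the trivial case where $u_2$ already appears in it; your write-up makes these details explicit but adds nothing substantively new.
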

   
\begin{proof}  
   It suffices to prove that $|L'(u_2)| > \sum_{e \in E_{G'-u_1}(u_2)}\lambda_{M'_e}(u_2)$. If (1) holds, then  $|L'(u_2)| \ge d_{G'}(u_2) > d_{G'-u_1}(u_2) = \sum_{e \in E_{G'-u_1}(u_2)}\lambda_{M'_e}(u_2)$. 

   Assume (2) holds. If $|L'(u_2)| < d_{G'}(u_2)+2$, then we have $|L'(u_2)| \ge \sum_{e \in E_{G'}(u_2)} \lambda_{M'_e}(u_2) > \sum_{e \in E_{G'-u_1}(u_2)}\lambda_{M'_e}(u_2)$. Otherwise, we have $|L'(u_2)| \ge d_{G'}(u_2)+2$. Since $|E_{G'-u_1}(u_2) \cap F| \le 1$, we have 
 \begin{align*}
     \sum_{e \in E_{G'-u_1}(u_2)}\lambda_{M'_e}(u_2) &\le d_{G'-u_1}(u_2)+2 = d_{G'}(u_2)+1 < |L'(u_2)|. \qedhere
 \end{align*}
 \end{proof}

 \begin{lem}
    \label{lem-path}
    Let $P=u_1u_2\ldots u_k$ be a path of $G_0$. Let $(L',M') \subseteq (L,M)$ be a cover of $P$ such that
    \[
|L'(u_{i})|   \ge \begin{cases} 
 2, &\text{if $i \in \{1,k\}$,}
\cr \min\{4, \sum_{e \in E_P(u_i)}\lambda_{M'_e}(u_i)\}, &\text{if $i \in \{2,\ldots,k-1\}$}, 
\end{cases}
\] 
and $\lambda_{M'_{u_1u_2}}(u_1) = 1$ if $|L'(u_1)|=2$.
Then $P$ is $(L',M')$-colourable.
\end{lem}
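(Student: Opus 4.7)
The plan is to proceed by induction on $k$. The base case $k=1$ is immediate because $|L'(u_1)|\ge 2\ge 1$, and in the inductive step we reduce to the shorter path $P-u_1=u_2u_3\ldots u_k$ via one of the two strategies below.

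\textbf{Case 1 ($u_1$ is removable).} If $|L'(u_1)|>\lambda_{M'_{u_1u_2}}(u_1)$, then since $u_2$ is the only neighbor of $u_1$ in $P$, the single-element sequence $(u_1)$ is removable, so by Lemma~\ref{obs-1} it suffices to color $P-u_1$ under the restricted cover. The only nontrivial hypothesis to check is the endpoint condition at $u_2$: the interior-vertex bound (valid when $k\ge 3$) gives $|L'(u_2)|\ge\min\{4,\lambda_{M'_{u_1u_2}}(u_2)+\lambda_{M'_{u_2u_3}}(u_2)\}\ge 2$, and if $|L'(u_2)|=2$ then this minimum equals $2$, forcing both summands to equal $1$, so in particular $\lambda_{M'_{u_2u_3}}(u_2)=1$, which is precisely the side condition required at the new endpoint. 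The case $k=2$ is handled in the same way, needing only $|L'(u_2)|\ge 2$.

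\textbf{Case 2 ($u_1$ must be colored carefully).} Otherwise $|L'(u_1)|\le\lambda_{M'_{u_1u_2}}(u_1)$. Since $|L'(u_1)|\ge 2$ and the hypothesis forbids $|L'(u_1)|=2$ from coexisting with $\lambda_{M'_{u_1u_2}}(u_1)\ge 2$, and since any $\lambda$-value is at most $3$, we must have $|L'(u_1)|=3=\lambda_{M'_{u_1u_2}}(u_1)$. By Lemma~\ref{lem-2colours}, at most two nodes of $L'(u_1)$ have two or more neighbors in $L'(u_2)$ under $M'_{u_1u_2}$, so we may pick $a_0\in L'(u_1)$ with $|N_{M'_{u_1u_2}}(a_0)|\le 1$. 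Set $\phi(u_1):=a_0$ and pass to the cover $(L'',M'')$ of $P-u_1$ with $L''(u_2):=L'(u_2)\setminus N_{M'_{u_1u_2}}(a_0)$ and $L''(u_i):=L'(u_i)$ for $i\ge 3$.

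It remains to verify that the new endpoint $u_2$ still satisfies the hypothesis. The equality $\lambda_{M'_{u_1u_2}}(u_1)=3$ forces a decomposition $M'_{u_1u_2}=N\mathbin{\dot\cup}M''$ with $N$ a matching and $M''\subseteq K_{2,2}$ satisfying $\lambda_{M''}(u_1)=2$; inspecting the three admissible shapes of $M''$ (namely $K_{2,2}$, $K_{2,2}$ minus an edge, and $K_{1,2}$ with degree-$2$ node in $L(u_2)$) shows $\lambda_{M''}(u_2)\ge 1$, hence $\lambda_{M'_{u_1u_2}}(u_2)\ge 2$. For $k\ge 3$, plugging this into the interior bound on $u_2$ yields $|L'(u_2)|\ge\min\{4,\,2+\lambda_{M'_{u_2u_3}}(u_2)\}$, whence $|L''(u_2)|\ge|L'(u_2)|-1\ge\max\{2,\lambda_{M'_{u_2u_3}}(u_2)\}$, which comfortably satisfies the endpoint bound and the side condition on $u_2$ in $P-u_1$; for $k=2$, only $|L''(u_2)|\ge 1$ is needed, which is clear. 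The main obstacle is Case 2: one must carefully track the possible $\lambda$-profiles of $M'_{u_1u_2}$ at $u_2$ to confirm that losing a single colour at $u_2$ never invalidates the inductive hypothesis.
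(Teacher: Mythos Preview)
Your proof is correct and follows essentially the same approach as the paper's: induct on $k$, peel off $u_1$ either by removability (when $|L'(u_1)|>\lambda_{M'_{u_1u_2}}(u_1)$) or by choosing a node $a_0\in L'(u_1)$ with at most one neighbour in $L'(u_2)$ (when $|L'(u_1)|=3=\lambda_{M'_{u_1u_2}}(u_1)$), and in each case verify the endpoint hypothesis at $u_2$. The only cosmetic differences are your base case $k=1$ versus the paper's $k=2$, and your case split on $|L'(u_1)|\gtrless\lambda$ rather than on $|L'(u_1)|=2$ versus $|L'(u_1)|\ge 3$; one small point worth tightening is that the stated bound $|L''(u_2)|\ge\max\{2,\lambda_{M'_{u_2u_3}}(u_2)\}$ does not by itself force $\lambda_{M''_{u_2u_3}}(u_2)=1$ when $|L''(u_2)|=2$, but your preceding inequality $|L'(u_2)|\ge\min\{4,2+\lambda_{M'_{u_2u_3}}(u_2)\}$ does (it gives $|L''(u_2)|\ge 3$ whenever $\lambda_{M'_{u_2u_3}}(u_2)\ge 2$), exactly as in the paper.
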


 \begin{proof}
We prove the Lemma~by induction on the number of vertices of $P$.

We first consider the case that $k = 2$, i.e.\ $P=u_1u_2$. If $|L'(u_1)| = 2$, then $\lambda_{M'_{u_1u_2}}(u_1) =1$, and $M'_{u_1u_2}$ is either a matching or a copy of $K_{1,2}$ with the degree 2 node in $L'(u_1)$. If $|L'(u_1)| \ge 3$, then $L'(u_1)$ contains some node of degree at most 1 in $M'_{u_1u_2}$. 
In any case, there exists $a \in L'(u_1)$ of degree at most 1 in $M'_{u_1u_2}$. Therefore, there is $b \in L'(u_2)-N_{M'}(a)$ and $\phi(u_1)=a$ and $\phi(u_2)=b$ is  an $(L',M')$-colouring $\phi$ of $P$.

Assume $k \ge 3$. Let $P' = P - u_1$.
If $|L'(u_1)| = 2$, then $\lambda_{M'_{u_1u_2}}(u_1)=1$. We have   $|L'(u_1)|> \sum_{e \in E_P(u_1)}\lambda_{M'_e}(u_1)$. By Lemma~\ref{obs-1}, $P$ is $(L',M')$-colourable if and only if $P'$ is $(L',M')|_{P'}$-colourable. Since $|L'(u_2)| \ge 2$, with equality only if $\lambda_{M'_{u_2u_3}}(u_2)=1$, $P'$ is $(L',M')|_{P'}$-colourable  by  induction hypothesis.

  Suppose $|L'(u_1)|\ge 3$. If $\lambda_{M'_{u_1u_2}}(u_1)\le 2$, then $|L'(u_1)|> \sum_{e \in E_P(u_1)}\lambda_{M'_e}(u_1)$ and we can argue similarly as in the previous case to show that $P$ is $(L',M')$-colourable. We thus assume that $\lambda_{M'_{u_1u_2}}(u_1)=3$. This implies that $\lambda_{M'_{u_1u_2}}(u_2)\ge 2$ and $|L'(u_2)| \ge 3$. Moreover, $|L'(u_2)| = 3$ only if $\lambda_{M'_{u_2u_3}}(u_2)=1$. By Lemma~\ref{lem-2colours}, there exists $a \in L'(u_1)$ such that $|N_{M'_{u_1u_2}}(a)|\le 1$. Let $(L'',M'')=(L',M')|_{P'}- N_{M'}(a)$. Then $(L'',M'')$ is a cover of $P'$ such that $|L''(u_2)| \ge 2$ and if $|L''(u_2)| =2$, then $|L'(u_2)| =3$ and $\lambda_{M''_{u_2u_3}}(u_2) \le \lambda_{M'_{u_2u_3}}(u_2)=1$. By the induction hypothesis, $P'$ has an $(L'',M'')$-colouring $\phi$. Thus $P$ is $(L',M')$-colourable as $\phi$ can be extended to an $(L',M')$-colouring of $P$ by letting $\phi(u_1)=a$.
  \end{proof}

 \begin{lem}
    \label{obs-2}
       Suppose $v \in V(G_0)$ satisfying $N_{G_0}(v)=\{u_1,u_2,u_3\}$ and $E_{G_0}(v) \not\subseteq F$.   If $\sum_{i=1}^3 \lambda_{M_{vu_i}}(v) \ge 5$, then $\lambda_{M_{vu_i}}(v)\ge 2$ for exactly two indices $i \in \{1,2,3\}$.
    \end{lem}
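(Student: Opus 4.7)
The plan is to prove the lemma by a short combinatorial case analysis on the triple $(\lambda_1,\lambda_2,\lambda_3)$ where $\lambda_i := \lambda_{M_{vu_i}}(v)$. By Definition~\ref{def-normal}, each $\lambda_i \in \{1,2,3\}$, so the conclusion ``exactly two indices have $\lambda_i \ge 2$'' naturally splits into an upper and a lower bound on the count of indices with $\lambda_i \ge 2$.

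For the upper bound, I would use the hypothesis $E_{G_0}(v) \not\subseteq F$ to pick an edge $vu_j \notin F$. By the definition of an $F$-valid cover, $M_{vu_j}$ must be a matching, and the matching clause of Definition~\ref{def-normal} then gives $\lambda_j = 1$. Thus at most two of the three $\lambda_i$ can be $\ge 2$.

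For the lower bound, I would argue by contradiction. If only one index has $\lambda_i \ge 2$, then two of the $\lambda_i$ equal $1$, and since each $\lambda_i \le 3$, we get $\sum_i \lambda_i \le 1+1+3 = 5$. Combined with the hypothesis $\sum_i \lambda_i \ge 5$, this forces the distribution $(1,1,3)$, say with $\lambda_3 = 3$. The definition of $\lambda = 3$ then unpacks to $M_{vu_3} = M' \dot\cup M''$ with $M'$ a nonempty matching, $M''$ a subgraph of $K_{2,2}$ satisfying $\lambda_{M''}(v) = 2$, and $M_{vu_3}$ itself being neither a matching nor a subgraph of $K_{2,2}$.

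I expect ruling out this $(1,1,3)$ distribution to be the main obstacle. My approach would be to trace $M_{vu_3}$ back through the construction of the $F$-valid cover to the two-terminal outerplanar graph $H_3$ replacing $vu_3$ in $G$: namely, $M'$ is the original matching link of the simple cover of $G$ and $M''$ is a coding $\bar M_{vu_3}$ produced by Lemma~\ref{key-lemma}. Using the inequality $d_{H_3}(v) \ge \lambda_{\bar M_{vu_3}}(v) + 1 = 3$ coming from Lemma~\ref{key-lemma}, together with the contributions of the edges $vu_1$ and $vu_2$, I would derive a lower bound on $d_G(v)$ and push it against the $F$-validity condition $|L(v)| \ge \min\{5,\sum_i \lambda_i\}$ and the equality $\sum_i \lambda_i = 5$. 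The aim is to show that the $(1,1,3)$ configuration forces more distinct nodes of $L(v)$ to be used by $M_{vu_3}$ than the other two edges can accommodate, producing the desired contradiction and thereby completing the lower bound.
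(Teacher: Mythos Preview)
Your upper bound argument is correct and matches the paper: since some edge $vu_j \notin F$, the corresponding $M_{vu_j}$ is a matching and $\lambda_j = 1$.

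The lower bound, however, has a genuine gap. The $(1,1,3)$ configuration you isolate is \emph{not} ruled out by any combinatorial or degree constraint coming from the definition of an $F$-valid cover: with $|L(v)| = 5$, a matching-plus-$K_{2,2}$ on one edge and matchings on the other two is perfectly consistent with all of Conditions~(1)--(4). So no amount of counting nodes of $L(v)$ or bounding $d_G(v)$ will produce a contradiction. Your plan to ``trace back to $G$'' also does not apply here: at this point in the argument, $(L,M)$ is an abstract $F$-valid cover of $G_0$ chosen as part of a minimal counterexample to Theorem~\ref{thm-3connected}, not one constructed from a specific $2$-connected $K_{2,4}$-minor-free graph $G$.

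What you are missing is exactly the minimality hypothesis. The paper's proof deletes from $L(v)$ the two nodes incident to the $K_{2,2}$ part of $M_{vu_1}$ (in your notation, the edge with $\lambda = 3$). After this deletion every $M'_{vu_i}$ is a matching, so $vu_1$ can be dropped from $F$; one checks that the resulting cover $(L',M')$ is $(F \setminus \{vu_1\})$-valid (including the $K_4$ clause, since then $|L(u_1)| \ge 4$ forces $M'_{vu_1}$ to be non-perfect). Since $|F \setminus \{vu_1\}| < |F|$, the minimality of the counterexample yields an $(L',M')$-colouring of $G_0$, which is automatically an $(L,M)$-colouring --- the desired contradiction.
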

\begin{proof}
    Since $E_{G_0}(v) \not\subseteq F$, we have $\lambda_{M_{vu_i}}(v)\ge 2$ for at most  two indices $i \in \{1,2,3\}$. Suppose  the Lemma~does not hold, say $\lambda_{M_{vu_1}}(v)=3$ and $\lambda_{M_{vu_2}}(v)=\lambda_{M_{vu_3}}(v)=1$. Then $M_{vu_1}$ is a matching plus a subgraph of $K_{2,2}$. 
    
    Let $(L',M')$ be obtained from $(L,M)$ by deleting the two nodes in $L(v)$   incident to links of the copy of $K_{2,2}$. Then $M'_{vu_i}$ is a matching for each $i=1, 2, 3$, and $\sum_{e \in E_{G_0}(v)}\lambda_{M'_e}(v)=\sum_{e \in E_{G_0}(v)}\lambda_{M_e}(v) - 2\le |L(v)|-2 = |L'(v)|$.   If $G_0=K_4$, then $|L(u_1)| \ge 4$, and hence $M'_{vu_1}$ is not a perfect matching. Therefore $(L',M')$ is an $(F-vu_1)$-valid cover of $G_0$.   By our choice of the counterexample,  $G_0$ has an $(L',M')$-colouring, which is also an $(L,M)$-colouring of $G_0$, a contradiction. 
\end{proof}
    
\begin{lem} \label{lem-degree-DP}
   There is a vertex $v\in V(G_0)$ with $|L(v)|=5 < \sum_{e \in E_{G_0}(v)}\lambda_{M_e}(v)$.
\end{lem}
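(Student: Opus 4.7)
The plan is to argue by contradiction. If the conclusion fails, then combining $|L(v)|=\min\{5,\sum_{e}\lambda_{M_e}(v)\}$ with the negation forces $|L(v)|=\sum_{e\in E_{G_0}(v)}\lambda_{M_e}(v)\le 5$ for every $v\in V(G_0)$. I would split into two cases according to whether every $M_e$ is \emph{symmetric}, i.e.\ $\lambda_{M_e}(u)=\lambda_{M_e}(v)$, or not.

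In the symmetric case, set $k_e:=\lambda_{M_e}(u)=\lambda_{M_e}(v)=\Delta(M_e)$, and use K\"onig's edge-colouring theorem to decompose each $M_e$ into $k_e$ matchings. Form the multigraph $G_0^*$ by replacing each $e$ with $k_e$ parallel copies and let $(L,M^*)$ be the simple cover obtained by assigning one matching to each copy, so that $d_{G_0^*}(v)=\sum_{e} k_e=|L(v)|$. Any $(L,M^*)$-colouring of $G_0^*$ would directly be an $(L,M)$-colouring of $G_0$, so $(L,M^*)$ admits none. Theorem~\ref{thm-DPdegree} then forces each block of $G_0^*$ to be $K_m^{\ell}$ or $C_m^{\ell}$, and since $G_0^*$ inherits the $3$-connectivity of $G_0$ while $C_m^{\ell}$ has vertex-connectivity only $2$, one must have $G_0^*=K_m^{\ell}$ with $m\ge 4$. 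Thus $G_0=K_m$, and $K_{2,4}$-minor-freeness restricts $m$ to $\{4,5\}$. The bound $(m-1)\ell=|L(v)|\le 5$ then yields $\ell=1$, so every $M_e$ is a perfect matching by the ``moreover'' part of Theorem~\ref{thm-DPdegree}, contradicting the $F$-validity requirement that at least one $M_e$ is not a perfect matching when $G_0$ is complete.

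In the asymmetric case, a short case analysis of the allowed structures of $M_e$ shows that $\lambda_{M_e}(u)\ne\lambda_{M_e}(v)$ can happen only when $M_e$ contains a copy of $K_{1,2}$, either as $M_e$ itself (when $M_e\subseteq K_{2,2}$) or as the summand $M''_e$ in the matching-plus-$K_{2,2}$ decomposition. Let $a$ be the degree-$2$ node of that $K_{1,2}$, and say $a\in L(u)$. I would then consider the modified cover $(L',M'):=(L-a,M-a)$ of $G_0$. A direct bookkeeping check shows that $M'_e$ is still of one of the three allowed forms, that the weighted degree drops at $u$ by at least $1$ and by strictly more than the drop at $v$, and consequently that $|L'(w)|\ge\min\{5,\sum_{e'}\lambda_{M'_{e'}}(w)\}$ holds for every $w$. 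If $G_0$ is complete, the edge witnessing ``not a perfect matching'' either survives unchanged or, when it was $e$ itself, becomes non-perfect automatically through the size mismatch $|L'(u)|\ne|L(v)|$. Finally, any $(L',M')$-colouring $\phi$ satisfies $\phi(u)\ne a$ and therefore avoids every link of $M$ incident to $a$, so $\phi$ is also an $(L,M)$-colouring of $G_0$. Hence $(L',M')$ is a smaller $F$-valid counterexample, contradicting the minimality in the choice of $(L,M)$.

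The main obstacle will be the symmetric case, where one has to combine $3$-connectivity of $G_0^*$, $K_{2,4}$-minor-freeness, and the $F$-validity clause about perfect matchings to squeeze out the contradiction after invoking Theorem~\ref{thm-DPdegree}; verifying that $G_0^*$ inherits the connectivity and block structure of $G_0$ is routine but indispensable. The asymmetric case is essentially a mechanical ``delete the heavy-side node'' reduction, and the only subtlety is tracking the asymmetric quantities $\lambda_{M_e}(u)$ and $\lambda_{M_e}(v)$ when confirming $F$-validity at $u$ and $v$.
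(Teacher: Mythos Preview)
Your symmetric case is essentially the paper's argument, modulo the harmless inaccuracy that $k_e=\lambda_{M_e}(v)$ need not equal $\Delta(M_e)$ (e.g.\ when $M_e$ is the empty matching, or a matching plus a full $K_{2,2}$); the decomposition into $k_e$ matchings is nonetheless available, so this does not matter.

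The asymmetric case, however, has a real gap. You have the orientation of $\lambda$ backwards: by the definition, the side of the $K_{1,2}$ containing the degree-$2$ node has $\lambda=1$, and it is the \emph{other} side (with the two degree-$1$ nodes) that has $\lambda=2$. So if $a\in L(u)$ is the degree-$2$ node and $M_e=K_{1,2}$, then $\lambda_{M_e}(u)=1$; after deleting $a$ the bipartite graph $M'_e$ is empty, hence still $\lambda_{M'_e}(u)=1$. There is no drop in the weighted degree at $u$ coming from $e$, and there is no reason for a compensating drop from the other edges incident to $u$. Concretely, take $d_{G_0}(u)=3$ with the other two incident edges carrying matchings; then $|L(u)|=1+1+1=3$, $|L'(u)|=2$, but $\sum_{e'}\lambda_{M'_{e'}}(u)$ is still $3$, so $(L',M')$ fails the third clause of $F$-validity and your minimality contradiction does not fire. (In the matching-plus-$K_{1,2}$ subcase the drop at $u$ is $1$ while the drop at $v$ is $2$, so your inequality ``drop at $u$ strictly exceeds drop at $v$'' is reversed there as well.)

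The paper's remedy is to act on the opposite side: for each asymmetric edge it removes one link of the $K_{1,2}$ (equivalently, one of the degree-$1$ nodes, which lie on the $\lambda=2$ side). This lowers the $\lambda$-value on that side from $2$ to $1$ and leaves the lists untouched, so the size condition is preserved at every vertex and no new asymmetries are created. After symmetrising all edges in one pass the paper feeds the result straight into Theorem~\ref{thm-DPdegree}, so it never needs the minimality of $\sum_v|L(v)|$ at this point.
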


 \begin{proof}
 Suppose, to the contrary, that the Lemma~does not hold. We have $3 \le |L(v)| =\sum_{e \in E_{G_0}(v)}\lambda_{M_e}(v) \le 5$ for every $v\in V(G_0)$.

 Let $(L',M')$ be a cover of $G_0$ obtained from $(L,M)$ by the following operation: For any edge $e=uv \in E(G_0)$ such that $M_e$ is either a copy of $K_{1,2}$ or the union of a matching and a copy of $K_{1,2}$ with the degree 2 node of the copy of $K_{1,2}$ in $L(v)$, delete one of the degree 1 nodes in $L(u)$ of the copy of $K_{1,2}$. For each such an operation,   $\lambda_{M_e}(u)$ is decreased by 1 while $\lambda_{M_e}(v)$ remains unchanged. In particular, we have $\lambda_{M'_e}(u) = \lambda_{M'_e}(v)$ for all $e = uv \in E(G_0)$ and $5 \ge |L'(v)| \ge \sum_{e \in E_{G_0}(v)}\lambda_{M'_e}(v)$ for all $v \in V(G_0)$. 

 For each edge $e=uv$, let $m(e) = \lambda_{M'_e}(u)$. Then $M'_e$ is the union of $m(e)$ matchings. Let $G'$ be the graph obtained from $G_0$ by replacing each edge $e$ of $G_0$ by $m(e)$ parallel edges. So $(L',M')$ is a simple $f$-cover of $G'$ with $f(v) = d_{G'}(v)$ for all $v$. Moreover, if $G' = K_4^t$ for some $t$ or $G' = K_5$, then there is an edge $e \in E(G')$ such that $M'_e$ is not a perfect matching.  By Theorem~\ref{thm-DPdegree}, $G'$ has an $(L', M')$-colouring, which is also an $(L,M)$-colouring of $G_0$, a contradiction. 
 \end{proof}

We are now ready to show that $G_0 \notin \mathcal{W}\cup\mathcal{G}\cup\mathcal{G}'$. The proofs are given in the following subsections.

\subsection{$G_0 \notin \mathcal{W}$}

Recall that $W_n$ is the join of a cycle $v_1 v_2 \dots v_n v_1$ and a vertex $v_{n+1}$. Let $F(W_n) = \{v_{n+1}v_1, v_1v_2, v_2v_3, \ldots, v_{n-1}v_n, v_nv_1\}$, which consists of the rims and one spoke.

\begin{lem}
    \label{lem-W_n}
 If $G_0=W_n$ with $n\ge 3$, then $F$ is not a subset of $F(W_n)$.
\end{lem}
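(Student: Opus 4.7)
I would prove Lemma~\ref{lem-W_n} by contradiction, supposing $G_0 = W_n$ and $F \subseteq F(W_n)$. The key structural fact is that $F(W_n)$ contains exactly one spoke, namely $v_{n+1}v_1$, so for every $i \in \{2,\dots,n\}$ the spoke $v_{n+1}v_i$ lies outside $F$ and $M_{v_{n+1}v_i}$ is a matching. Consequently, colouring $v_{n+1}$ removes at most one node from each $L(v_i)$ with $i \neq 1$, and by the extension of Lemma~\ref{lem-2colours} to $G_0$, I can pick a colour for $v_{n+1}$ that also removes at most one node from $L(v_1)$.

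I would treat $n = 3$, where $W_3 = K_4$, separately. The $F$-validity clause~(4) ensures that at least one $M_e$ is not a perfect matching; then either all $M_e$ are matchings, in which case the cover is a simple degree-DP cover and Theorem~\ref{thm-DPdegree} yields an $(L,M)$-colouring of $K_4$, or some $M_e$ has $\lambda_{M_e}(u) \geq 2$, in which case $|L(u)|$ strictly exceeds the remaining weighted degree after a localised reduction and iterating Lemma~\ref{obs-1} together with Lemma~\ref{lem-path} produces a colouring. Either subcase contradicts the minimality of the counterexample.

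For $n \geq 4$, the plan is to colour $v_{n+1}$ and then $v_1$, so that the residual rim becomes the path $v_2 v_3 \cdots v_n$, to which Lemma~\ref{lem-path} applies. Since $|L(v_{n+1})| \geq \min\{5,n\} \geq 4$, Lemma~\ref{lem-2colours} supplies $a \in L(v_{n+1})$ with $|N_{M_{v_1 v_{n+1}}}(a)| \leq 1$. Writing $L'(v) = L(v) \setminus N_M(a)$, every rim vertex satisfies
\[
|L'(v_i)| \geq |L(v_i)| - 1 \geq \min\{4,\, \lambda_{M_{v_{i-1} v_i}}(v_i) + \lambda_{M_{v_i v_{i+1}}}(v_i)\}
\]
(with indices taken cyclically on the rim). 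I would then select $b \in L'(v_1)$ so that on $v_2 \cdots v_n$ the lists $L''(v_i) = L'(v_i) \setminus N_M(b)$ verify the hypotheses of Lemma~\ref{lem-path}. The interior constraints at $v_3, \dots, v_{n-1}$ are automatic since $v_1$ is non-adjacent to these vertices, so the real work lies at the two endpoints $v_2$ and $v_n$.

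The main obstacle is the tight endpoint configuration $\lambda_{M_{v_1 v_2}}(v_2) = \lambda_{M_{v_2 v_3}}(v_2) = 1$ with $|L(v_2)| = 3$ (and its mirror at $v_n$), where a careless $b$ could remove two distinct nodes from $L(v_2)$ and leave only one. I would address this by imposing the extra condition $N_{M_{v_1 v_2}}(b) \subseteq N_{M_{v_{n+1} v_2}}(a)$ together with its analogue at $v_n$, forcing the rim-loss at the endpoint to coincide with the spoke-loss already inflicted by $a$. Since $M_{v_1 v_2}$ and $M_{v_n v_1}$ are matchings or copies of $K_{1,2}$ in this regime, only boundedly many $b \in L'(v_1)$ violate the constraints, and comparing this count to $|L'(v_1)|$ (which is at least $\min\{4, \lambda_{M_{v_n v_1}}(v_1) + \lambda_{M_{v_1 v_2}}(v_1)\}$, and at least $4$ whenever $v_1 v_{n+1} \in F$ because then $|L(v_1)| = 5$) guarantees an admissible $b$. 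In the residual corner configurations where the counting is still too tight, I would instead invoke Lemma~\ref{lem-degree-DP} to locate an overloaded vertex and build a removable sequence via Lemma~\ref{lem-remove}, sidestepping the endpoint bottleneck. Combining $\phi(v_{n+1}) = a$, $\phi(v_1) = b$, and the colouring of $v_2 \cdots v_n$ supplied by Lemma~\ref{lem-path} yields an $(L,M)$-colouring of $G_0$, contradicting the choice of counterexample.
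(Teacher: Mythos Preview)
Your strategy---colour the hub $v_{n+1}$, then $v_1$, then feed the path $v_2\cdots v_n$ to Lemma~\ref{lem-path}---is different from the paper's case split on rim list sizes, and there is a genuine gap in the hardest configuration.

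The problem is your choice of $v_1$ as the second vertex to precolour. Because $v_{n+1}v_1$ is the unique spoke lying in $F(W_n)$, all three edges at $v_1$ may lie in $F$, so Lemma~\ref{obs-2} gives you no control over the rim $\lambda$'s at $v_1$. Concretely, suppose every rim vertex has $|L(v_i)|=5$; then by Lemma~\ref{obs-2} applied at $v_2$ and at $v_n$ (where the spoke is forced outside $F$), both rim edges at each of $v_2,v_n$ have $\lambda\ge 2$, but no analogous statement holds at $v_1$. After your choice of $a$ you have only $|L'(v_1)|\ge 4$, and the ``bad'' sets for the two endpoint constraints can each have size~$2$; four bad nodes against four available ones leaves nothing. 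Your fallback---``invoke Lemma~\ref{lem-degree-DP} and build a removable sequence''---does not rescue this: when every rim list equals $5$, Lemma~\ref{lem-degree-DP} points to no distinguished vertex, and after deleting $v_{n+1}$ every rim vertex $v_i$ with $i\ge 2$ has $|L'(v_i)|\ge 4$ but also $\sum_e\lambda_{M'_e}(v_i)\ge 4$ by Lemma~\ref{obs-2}, so no rim vertex is removable.

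The paper avoids this by precolouring $v_2$ rather than $v_1$ in the all-lists-$5$ case. Since $v_2v_{n+1}\notin F$, Lemma~\ref{obs-2} forces $\lambda_{M_{v_1v_2}}(v_2)\ge 2$ and $\lambda_{M_{v_2v_3}}(v_2)\ge 2$, and then the ``moreover'' clause of Lemma~\ref{lem-2colours} yields $a\in L(v_2)$ with $|N_M(a)\cap L(v_i)|\le \min\{1,\lambda_{M_{v_2v_i}}(v_i)-1\}$ simultaneously for $i\in\{1,3\}$. This is exactly the double-sided control that your approach lacks at $v_1$. The paper then colours $v_{n+1}$ second and applies Lemma~\ref{lem-path} to $v_3\cdots v_nv_1$. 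When some rim list is at most~$4$ (the paper's Cases~1 and~2), the paper instead uses removable sequences directly, bypassing the path lemma.

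Separately, your $n=3$ paragraph is too vague to be a proof: the phrase ``localised reduction'' is unspecified, and one cannot verify the claim that $|L(u)|$ strictly exceeds a remaining weighted degree without saying which nodes are deleted. The paper handles $n=3$ within its uniform case analysis without a special argument.
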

\begin{proof}
    Suppose, to the contrary, that $F$ is a subset of $F(W_n)$.    

\medskip
\noindent
{\bf Case 1:} $|L(v_t)|\le 4$ for  $t\in [n]$. 

By Lemma~\ref{lem-degree-DP}, there exists some vertex $v_i$ with $|L(v_i)|=5$. Thus $|L(v_{n+1})|=5$. 

Since $v_2v_{n+1}\notin F$, $M_{v_2v_{n+1}}$ is a matching. Thus there exists $a\in L(v_{n+1})$ such that $N_M(a) \cap L(v_2)=\emptyset$.
Let $$G'=G_0-v_{n+1}~\text{and}~(L', M')= (L,M)|_{G'} - N_{M}(a).$$ Then an $(L',M')$-colouring $\phi$ of $G'$ can be extended to an $(L,M)$-colouring of $G_0$ by letting $\phi(v_{n+1})=a$, and it suffices to show that $G'$ is $(L',M')$-colourable.

Since $|L(v_2)|\le 4$, then $|L(v_2)|=\lambda_{(L,M)}(v_2)$ and hence $$|L'(v_2)|=|L(v_2)|>\lambda_{(L',M')}(v_2).$$ 
{In other words, $v_2$ is removable. Since $|L'(v_3)|=\lambda_{(L',M')}(v_3)$ and $|E_{G'-v_2}(v_3)\cap F|\le 1$,  $v_3$ is removable by Lemma ~\ref{lem-remove}.}
By repeatedly applying Lemma~\ref{lem-remove}, we conclude that $(v_2,v_3,\ldots, v_n, v_1)$ is a removable sequence with respect to $(L',M')$ and hence $G'$ is $(L',M')$-colourable, a contradiction.

\medskip
\noindent
{\bf Case 2:} $|L(v_t)|=5$ and $|L(v_{t+1})|\le 4$ for  some $t\in [n-1]$.
 
If $|L(v_{n+1})|=5$, then there exists $a\in L(v_{n+1})$ such that $N_M(a) \cap L(v_{t+1})=\emptyset$.
Let $$G'=G_0-v_{n+1}~\text{and}~(L', M')= (L,M)|_{G'} - N_{M}(a).$$ 
It follows from Lemma~\ref{lem-remove} that $$(v_{t+1},v_{t+2},\ldots, v_n, v_t, v_{t-1},\ldots, v_1)$$ is a removable sequence with respect to $(L',M')$ and hence $G'$ is $(L',M')$-colourable, a contradiction.

Assume $|L(v_{n+1})|\le 4$.

By Lemma~\ref{lem-2colours}, there exists $a\in L(v_t)$ such that $|N_M(a) \cap L(v_{t+1})|<\lambda_{M_{v_tv_{t+1}}}(v_{t+1})$.
Let $$G'=G_0-v_t~\text{and}~(L', M')= (L,M)|_{G'} - N_{M}(a).$$ 
It suffices to show that $G'$ is $(L',M')$-colourable.

Since $|L(v_{t+1})|\le 4$, we have $|L'(v_{t+1})|>\lambda_{(L',M')}(v_{t+1})$. Since $|L(v_{n+1})|\le 4$, then $|L(v_{n+1})|=\lambda_{(L,M)}(v_{n+1})$.
By Lemma~\ref{lem-remove}, $$(v_{t+1},v_{t+2},\ldots, v_n, v_{n+1}, v_1, v_2,\ldots, v_{t-1})$$ is a removable sequence with respect to $(L',M')$ and hence $G'$ is $(L',M')$-colourable, a contradiction.

\medskip
\noindent
{\bf Case 3:} $|L(v_t)| = 5$ for all $t\in [n]$.

Since $|L(v_2)| = 5$, $d_{G_0}(v_2)=3$ and $v_2 v_{n+1} \notin F$, by Lemma~\ref{obs-2},   $\lambda_{M_{v_1v_2}}(v_2) \ge 2$ and $\lambda_{M_{v_2v_3}}(v_2) \ge 2$. 
By Lemma~\ref{lem-2colours},  there are at most 2 nodes $a\in L(v_2)$ such that $|N_M(a) \cap L(v_i)|\ge \min \{2, \lambda_{M_{v_iv_2}}(v_i)\}$ for $i \in \{1,3\}$. 
Thus there exists a node $a \in L(v_2)$  such that $$|N_M(a) \cap L(v_i)| \le \min\{1, \lambda_{M_{v_2v_i}}(v_i)-1 \}~\text{for}~   i \in \{1,3\}.$$

If $|L(v_{n+1})|=3$, then $M_{v_{n+1}v_1}$ is either a matching or a copy of $K_{1,2}$ with the degree 2 node in $L(v_{n+1})$. If $|L(v_{n+1})|\ge 4 $, then $|L(v_{n+1}) \setminus N_M(a)|\ge 3$. In any case, there exists $b \in L(v_{n+1}) \setminus N_M(a)$ such that $|N_M(b) \cap L(v_1)| \le 1$. 
Let $$G'=G_0-v_2-v_{n+1}~ \text{and}~(L',M')=(L,M)|_{G'} - (N_M(a) \cup N_M(b)).$$ 
By the choice of $a$ and $b$, we have \begin{align*}
    |L'(v_{i})| \ge \begin{cases}   3, &\text{if $i\in \{1,3\}$}, \cr
    4, &\text{if $i \in \{4,\ldots,n\}$}.
\end{cases}
\end{align*} 
By Lemma~\ref{lem-path}, the path $G'=v_3v_4...v_{n-1}v_nv_1$ is $(L',M')$-colourable, a contradiction.
\end{proof}

 \begin{lem}
     \label{lem-case6}
     $G_0 \notin \mathcal{W}$.
 \end{lem}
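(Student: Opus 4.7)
The plan is to conclude the argument begun in Lemma~\ref{lem-W_n} by combining it with the classification of maximal subdividable sets of wheels from Lemma~\ref{lem-sub}(1). Suppose, towards a contradiction, that $G_0 \in \mathcal{W}$, i.e., $G_0 = W_n$ for some $n \ge 3$. By Lemma~\ref{lem-sub}, the subdividable edge set $F$ must be contained in some maximal subdividable edge set of $W_n$. If $n \neq 4$, then Lemma~\ref{lem-sub}(1) tells us that the only maximal subdividable edge set is $F(W_n)$, so $F \subseteq F(W_n)$, which directly contradicts Lemma~\ref{lem-W_n}. Hence it remains to handle the case $G_0 = W_4$ with $F$ contained in one of the two additional maximal subdividable edge sets shown in Table~\ref{tab-max-sub}.

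For each of these two residual configurations in $W_4$, I would mirror the strategy used in the proof of Lemma~\ref{lem-W_n}. First, Lemma~\ref{lem-degree-DP} guarantees some vertex $v$ with $|L(v)| = 5 < \sum_{e \in E_{G_0}(v)} \lambda_{M_e}(v)$. Combined with Lemma~\ref{obs-2}, which forces a degree-$3$ vertex whose list is saturated to have $\lambda_{M_e}(v) \ge 2$ on exactly two of its incident edges, this allows a case split based on which of the five vertices of $W_4$ have list size $5$. In each case, I would use Lemma~\ref{lem-2colours} to pick a node $a \in L(v)$ (and in some subcases a second node $b \in L(v')$) whose neighbours in the cover overlap as little as possible with the lists of the remaining neighbours, then delete $v$ (and $v'$) together with $N_M(a) \cup N_M(b)$. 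The resulting graph is either a short path whose list sizes satisfy the hypothesis of Lemma~\ref{lem-path}, or a sequence of vertices that can be shown removable via successive applications of Lemma~\ref{lem-remove}; Lemma~\ref{obs-1} then yields the desired $(L,M)$-colouring of $G_0$, contradicting the choice of counterexample.

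The main obstacle will be that the two extra $W_4$-subdividable sets may concentrate more $F$-edges at a single vertex than occurs for the generic set $F(W_4)$, which weakens the condition $|E_{G'-u_1}(u_2) \cap F| \le 1$ required by Lemma~\ref{lem-remove}(2). To overcome this, I would exploit both the small diameter of $W_4$ and the flexibility provided by the fact that every remaining vertex has $|L(v)| \ge 3$: after pre-colouring at most two vertices, what is left is a path on at most three vertices, which can be coloured by Lemma~\ref{lem-path} regardless of how the $F$-edges distribute their extra weight. A secondary subtlety arises when the hub has $|L(v_5)| \le 4$; the remedy, as in Case~3 of Lemma~\ref{lem-W_n}, is to pre-colour a rim vertex instead, using the two $\lambda_{M_e} \ge 2$ edges guaranteed by Lemma~\ref{obs-2} to force a useful choice of node via Lemma~\ref{lem-2colours}.
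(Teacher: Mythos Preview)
Your plan is correct and follows essentially the same route as the paper: reduce via Lemma~\ref{lem-W_n} and Lemma~\ref{lem-sub} to $G_0=W_4$ with $F$ inside one of the two exceptional maximal subdividable sets, then use Lemma~\ref{lem-degree-DP} and Lemma~\ref{obs-2} to pin down which vertices can have $|L(v)|=5$, pre-colour such a vertex via Lemma~\ref{lem-2colours}, and finish by removability.

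The one tactical difference worth noting is that the paper never needs two pre-colourings or Lemma~\ref{lem-path} here. In both exceptional $W_4$ configurations, Lemma~\ref{obs-2} (contrapositive) shows that every degree-$3$ rim vertex with at most one incident $F$-edge has $|L|\le 4$; this already rules out two or three of the rim vertices, so the vertex with $|L|=5$ is the hub or one specific rim vertex. A single pre-colouring of that vertex (choosing $a$ so that a neighbour with $|L|\le 4$ loses nothing, or loses strictly less than $\lambda$) makes the remaining four vertices into a removable sequence directly via Lemma~\ref{lem-remove}. Your anticipated obstacle about $|E_{G'-u_1}(u_2)\cap F|\le 1$ failing does not materialise, because the removable sequence can always be ordered so that each vertex, at the moment it is reached, has at most one surviving $F$-edge. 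So your fallback to Lemma~\ref{lem-path} is sound but unnecessary.
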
 
\begin{proof}
 Suppose, to the contrary, that $G_0\in \mathcal {W}$. By Lemma~\ref{lem-W_n} and Lemma~\ref{lem-sub}, $G_0=W_4$ and  $F$ is a subset of one of the maximal subdividable sets depicted in Figure~\ref{fig-sub-W_5}.

  By Lemma~\ref{lem-degree-DP}, there is at least one vertex $v_i$ with $|L(v_i)|=5$ and $\sum_{e \in E_{G_0}(v_i)}\lambda_{M_e}(v_i)>5$.

  \medskip\noindent
{\bf Case 1:} $F$ is a subset of the maximal subdividable set given in Figure~\ref{fig-sub-W_5}(a).

For $j \in \{1, 3\}$,  since $d_{G_0}(v_j)=3$ and $|E_{G_0}(v_j)\cap F|\le 1$,  $|L(v_j)|\le 4$ by Lemma~\ref{obs-2}.

Assume  $|L(v_5)|=5$.

Since $|L(v_1)|\le 4$, there exists  $a\in L(v_5)$ such that $N_{M}(a)\cap L(v_1)=\emptyset$.
  Let $G'=G_0-v_5$ and  $(L', M')= (L,M)|_{G'}- N_{M}(a)$.
If $|L(v_4)|=5$, then $|L'(v_4)|\ge 5-3=2$. If $|L(v_4)|\le 4$, then $|L(v_4)|=\lambda_{(L,M)}(v_4)$ and $|L'(v_4)|=\lambda_{(L',M')}(v_4)$. Thus  $(v_1,v_4,v_3,v_2)$ is a removable sequence with respect to $(L',M')$ by Lemma~\ref{lem-remove}. By Lemma~\ref{obs-1}, $G'$ is $(L',M')$-colourable, a contradiction.

 Thus $|L(v_5)|=d_{G_0}(v_5)=4$, and at least one of $|L(v_2)|, |L(v_4)|$ equals $5$.   By symmetry,  we may assume that $|L(v_2)|= 5$.
 
 By Lemma~\ref{lem-2colours}, there exists  $a\in L(v_2)$ such that $N_M(a)\cap L(v_5)=\emptyset$. 
 Let $G'=G_0-v_2$ and  $(L', M')= (L,M)|_{G'}-  N_{M}(a)$.
 Thus, by Lemma~\ref{lem-remove} and Lemma~\ref{obs-1}, $(v_5, v_1, v_4, v_3)$ is a removable sequence with respect to $(L',M')$  and $G'$ is $(L',M')$-colourable, a contradiction.
  
\medskip\noindent
{\bf Case 2:}  
 $F$ is a subset of the maximal subdividable set given in Figure~\ref{fig-sub-W_5}(b).

 For $j \in [3]$, since $d_{G_0}(v_j)=3$ and $|E_{G_0}(v_j)\cap F|\le 1$, $|L(v_j)|\le 4$ by Lemma~\ref{obs-2}.

Assume $|L(v_4)|=5$.

There exists a node $a$ in $L(v_4)$ such that $|N_M(a) \cap L(v_1)| <\lambda_{M_{v_4v_1}}(v_1)$. Let $G'=G_0-v_4$ and $(L', M')= (L,M)|_{G'}- N_{M}(a)$.
 Thus $(v_1, v_2, v_3,v_5)$ is a removable sequence with respect to $(L',M')$ by  Lemma~\ref{lem-remove} and hence $G'$ is $(L',M')$-colourable, a contradiction.

Assume $|L(v_5)|=5$.
Since $|L(v_2)|\le 4$, there exists $a\in L(v_5)$ such that  $|N_M(a) \cap L(v_2)| < \lambda_{M_{v_2v_5}}(v_2)$. Let $G'=G_0-v_5$ and  $(L', M')= (L,M)|_{G'}-  N_{M}(a)$. Since $|L'(v_2)|>\sum_{e \in E_{G'}(v_2)}\lambda_{M'_e}(v_2)$, by Lemma~\ref{lem-remove}, $(v_2, v_1, v_3,v_4)$ is a removable sequence with respect to $(L',M')$. Thus $G'$ is $(L',M')$-colourable by Lemma~\ref{obs-1}, a contradiction.
\end{proof}

\begin{figure} [htbp]
\centering
\subfigure[]{
\begin{minipage} [t]{0.22\linewidth} \centering
\includegraphics [scale=1.3]{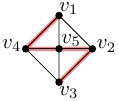} 
\end{minipage}
}
\subfigure[]{
\begin{minipage} [t]{0.22\linewidth} \centering
\includegraphics [scale=1.3]{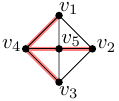} 
\end{minipage}
}
\caption{Some maximal subdividable sets of $W_4$.} 	\label{fig-sub-W_5}
\end{figure}

\subsection{$G_0 \notin \mathcal{G}$}

 For $G_0 = G^{(+)}_{n,r,s} \in \mathcal{G}$, we assume $r \le s$ and label the vertices of $G_0$ as $v_1,v_2, \ldots, v_n$ so that $v_1 v_2 \dots v_n$ is a spine of $G_0$.

\begin{lem}
    \label{lem-case1}
     If $G_0 \in \{G^{(+)}_{n, r, s}: r \ge 2, s \ge 3, r+s=n-2\}$, or $G_0 \in \{G^{(+)}_{n, r, s}: r \ge 2, s \ge 3, r+s=n-1\}$ with  $|L(v_{s+1})| = 4$, then $F$ is not a subset of the spine.  
\end{lem}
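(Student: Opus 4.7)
The plan is to argue by contradiction: assuming $F\subseteq E(\text{spine})$, I would construct an $(L,M)$-colouring of $G_0$, contradicting the minimality of the counterexample. The strategy is to colour the two hub vertices $v_1$ and $v_n$ first, resolve any constraint from the edge $v_1v_n$ if it is present, and then complete the colouring along the remaining spine path $P=v_2v_3\cdots v_{n-1}$ via Lemma~\ref{lem-path}.

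The first observation is that $F\subseteq E(\text{spine})$ forces every non-spine edge of $G_0$ to be a matching, so for any fixed choice $\phi(v_1)=a$, $\phi(v_n)=b$, each interior spine vertex adjacent to exactly one of $v_1,v_n$ loses at most one colour, and any vertex adjacent to both loses at most two. Using $|L(v_i)|=\min\{5,\sum_e\lambda_{M_e}(v_i)\}$, a direct check shows that an interior path vertex adjacent to only one hub satisfies $|L'(v_i)|\ge\min\{4,\lambda_\ell+\lambda_r\}$, where $\lambda_\ell,\lambda_r$ are the weights of its incident spine edges at $v_i$; this is exactly the Lemma~\ref{lem-path} interior condition. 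In the sub-case $r+s=n-1$, the unique interior vertex adjacent to both hubs is $v_{s+1}$, and the hypothesis $|L(v_{s+1})|=4$ together with $d_{G_0}(v_{s+1})=4$ forces $\lambda_{M_e}(v_{s+1})=1$ on every incident edge, so $|L'(v_{s+1})|\ge 4-2=2=\min\{4,2\}$ is sufficient.

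It remains to choose $a,b$ so that the path endpoints $v_2$ and $v_{n-1}$ satisfy the Lemma~\ref{lem-path} endpoint condition ($|L'(\cdot)|\ge 2$, with the extra requirement on the incident spine-edge weight in case of equality), and additionally $ab\notin M_{v_1v_n}$ whenever $v_1v_n\in E(G_0)$. I would invoke Lemma~\ref{lem-2colours} to bound by two the number of $a\in L(v_1)$ for which $|N_{M_{v_1v_2}}(a)\cap L(v_2)|\ge 2$, and symmetrically for $b$ at $v_{n-1}$ via $M_{v_{n-1}v_n}$. Combined with $|L(v_1)|\ge r+1\ge 3$ and $|L(v_n)|\ge s+1\ge 4$, enough good choices remain, and the single matching $M_{v_1v_n}$, if present, excludes only a small additional set of pairs $(a,b)$.

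The main obstacle is the boundary case $|L(v_2)|=3$, which forces $\lambda_{M_{v_1v_2}}(v_2)=\lambda_{M_{v_2v_3}}(v_2)=1$; then a one-colour loss from each of $v_1$ and $v_n$ at $v_2$ would reduce $|L'(v_2)|$ to $1$, violating the endpoint condition, and symmetrically at $v_{n-1}$. I would resolve this by a case split on whether the matching $M_{v_1v_2}$ saturates $L(v_1)$: if not, one picks $a$ with $N_{M_{v_1v_2}}(a)\cap L(v_2)=\emptyset$, which restores $|L'(v_2)|\ge 2$; if so, then $|L(v_1)|\le|L(v_2)|=3$, which constrains the structure at $v_1$ enough that one routes the slack through a choice of $b$ with $N_{M_{v_2v_n}}(b)\cap L(v_2)=\emptyset$, using $|L(v_n)|\ge 4$ and the fact that $M_{v_2v_n}$ has at most $|L(v_2)|=3$ links to guarantee such a $b$ exists. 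A parallel argument works at $v_{n-1}$, and the remaining compatibility with $M_{v_1v_n}$ is secured by the same kind of counting.
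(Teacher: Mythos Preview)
Your high-level strategy matches the paper's: colour the two hubs $v_1$ and $v_n$ first, then finish on the spine path $P=v_2\cdots v_{n-1}$ via Lemma~\ref{lem-path}. Your verification of the interior path condition and of the special vertex $v_{s+1}$ (when $r+s=n-1$ and $|L(v_{s+1})|=4$) is correct.

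The gap is in the endpoint analysis. You propose to choose $a\in L(v_1)$ and $b\in L(v_n)$ simultaneously, treating the constraints at $v_2$ and at $v_{n-1}$ by ``parallel'' case splits. But these two case splits can collide. Concretely, take $G_0=G_{7,2,3}$ with $|L(v_1)|=|L(v_2)|=|L(v_6)|=3$ and $|L(v_7)|=4$, and let $M_{v_1v_2}$ and $M_{v_1v_6}$ be perfect matchings of size~$3$. Your $v_2$-branch (``saturates'') tells you to pick $b$ with $N_{M_{v_2v_7}}(b)=\emptyset$; your parallel $v_{6}$-branch (noting $M_{v_6v_7}$ can never saturate $L(v_7)$ since $|L(v_7)|>|L(v_6)|$) tells you to pick $b$ with $N_{M_{v_6v_7}}(b)=\emptyset$. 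With $|L(v_7)|=4$ and up to three bad nodes for each constraint, no single $b$ need satisfy both. A valid pair $(a,b)$ does exist in such instances, but only by arranging that the losses from $a$ and $b$ at one endpoint hit the \emph{same} node of $L(v_2)$ or $L(v_6)$; your proposal does not supply that coordination, and ``the same kind of counting'' does not cover it.

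The paper avoids this by colouring the hubs \emph{sequentially} and using a clean dichotomy on $M_{v_{n-1}v_n}$: either some $a\in L(v_n)$ satisfies $|N_{M_{v_{n-1}v_n}}(a)|\le\min\{1,\lambda_{M_{v_{n-1}v_n}}(v_{n-1})-1\}$ (Case~1), or every node of $L(v_n)$ is matched and hence $|L(v_{n-1})|\ge|L(v_n)|\ge 4$ (Case~2). In Case~1 one colours $v_n$ first; the point is that now $L'(v_2)$ has shrunk, so when $|L'(v_2)|=2$ at most two nodes of $L'(v_1)$ still see $L'(v_2)$, and $|L'(v_1)|\ge 3$ guarantees a good $b$. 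In Case~2 one colours $v_1$ first (any $a$ with $|N_{M_{v_1v_2}}(a)|\le1$ works) and then chooses $b\in L'(v_n)$ to protect $v_2$; the bound $|L(v_{n-1})|\ge4$ makes the endpoint condition at $v_{n-1}$ automatic. This sequential trick---restrict one list first, then exploit the smaller target---is exactly what your simultaneous scheme is missing.
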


\begin{proof}
    Suppose, to the contrary, that $G_0 \in \{G^{(+)}_{n, r, s}: r \ge 2, s \ge 3, r+s=n-2\}$, or $G_0 \in \{G^{(+)}_{n, r, s}: r \ge 2, s \ge 3, r+s=n-1\}$ with $|L(v_{s+1})| = 4$, and $F$ is a subset of edges of the spine $v_1 v_2 \dots v_n$.  

    Recall that $v_n v_i \in E(G_0)$ for $i=2,3, \ldots, s,s+1$, $v_1 v_i \in E(G_0)$ for $i= n-r, \ldots, n-1$, and $v_1v_n$ may or may not be an edge of $G_0$. Therefore, we have $d_{G_0}(v_i)=3$ for all $2\le i \le n-1$ if $r+s=n-2$, and $d_{G_0}(v_i)=3$ for all $i \in \{2, \dots, s\} \cup \{s+2, \dots, n-1\}$ and $d_{G_0}(v_{s+1})=4$ if $r+s=n-1$. Moreover, $d_{G_0}(v_1) \ge 3$ and $d_{G_0}(v_n) \ge 4$. 

\medskip\noindent
\textbf{Case 1:} There exists $a\in L(v_n)$ such that $|N_{M_{v_{n-1}v_n}}(a)|\le \min\{1, \lambda_{M_{v_{n-1}v_n}}(v_{n-1})-1\}$.

  Let $G'=G_0-v_n$ and $(L',M')=(L,M)|_{G'} - N_M(a)$ be a cover of $G'$. Then an $(L',M')$-colouring $\phi$ of $G'$ can be extended to an $(L,M)$-colouring of $G_0$ by letting $\phi(v_n)=a$, and it suffices to show that $G'$ is $(L',M')$-colourable.

It is easy to see that $|L'(v_1)| \ge 3$ (regardless of whether $v_1 v_n \in E(G_0)$ or not). If $|L'(v_2)|=2$, then $\lambda_{M_{v_1v_2}}(v_2)=1$ and we can choose a node $b \in L'(v_1)$ such that  $N_{M'_{v_1 v_2}}(b)=\emptyset$. If $|L'(v_2)|\ge 3$, then we choose $b\in L'(v_1)$ such that $|N_{M'_{v_1 v_2}}(b)|\le 1$. 

Let $G''=G'-v_1$ and $(L'',M'')=(L',M')|_{G''} - N_{M'}(b)$. It remains to show that the path $G''$ is $(L'',M'')$-colourable. 

By the choice of $a$ and $b$, we know that $|L''(v_2)| \ge 2$ and \begin{align*}
    |L''(v_{n-1})| &\ge |L(v_{n-1})|-\min\{1, \lambda_{M_{v_{n-1}v_n}}(v_{n-1})-1\} - 1\\ &\ge \min \{5, \sum_{e \in E_{G_0}(v_{n-1})}\lambda_{M_e}(v_{n-1})\} -\lambda_{M_{v_{n-1}v_n}}(v_{n-1})\\
    &\ge d_{G'}(v_{n-1})=2.
\end{align*} Moreover, if $|L''(v_{n-1})|=2$, then $\min\{1, \lambda_{M_{v_{n-1}v_n}}(v_{n-1})-1\} = \lambda_{M_{v_{n-1}v_n}}(v_{n-1})-1$ and $|L(v_{n-1})|-\lambda_{M_{v_{n-1}v_n}}(v_{n-1})=d_{G'}(v_{n-1})$, from which we obtain that $\lambda_{M_{v_{n-1}v_n}}(v_{n-1}) \le 2$ and $\sum_{e \in E_{G'}(v_{n-1})}\lambda_{M_e}(v_{n-1}) = d_{G'}(v_{n-1})$. This implies that $\lambda_{M_{v_{n-1}v_{n-2}}}(v_{n-1})=1$. For $r+s=n-2$ and $i \in \{3, \dots, n-2\}$, or $r+s=n-1$ and $i \in \{3, \dots, s\} \cup \{s+2, \dots, n-2\}$, we have \begin{align*}
    |L''(v_{i})| &\ge |L(v_i)| - 1\\
    &\ge \min \{5, \sum_{e \in E_{G_0}(v_{i})}\lambda_{M_e}(v_{i})\} - 1\\
    &\ge \min \{4, \sum_{e \in E_{G''}(v_{i})}\lambda_{M''_e}(v_{i})\}.
\end{align*} For $r+s=n-1$, we have \begin{align*}
    |L''(v_{s+1})| &\ge |L(v_{s+1})| - 2\\
    &= \sum_{e \in E_{G''}(v_{s+1})}\lambda_{M''_e}(v_{s+1})\\
    &= \min \{4, \sum_{e \in E_{G''}(v_{s+1})}\lambda_{M''_e}(v_{s+1})\}.
\end{align*} Thus we can apply Lemma~\ref{lem-path} to conclude that $G''$ is $(L'',M'')$-colourable, a contradiction.

\medskip\noindent
\textbf{Case 2:} For every $a\in L(v_n)$, $|N_{M_{v_{n-1}v_n}}(a)| > \min\{1, \lambda_{M_{v_{n-1}v_n}}(v_{n-1})-1\}$.

By Lemma~\ref{lem-2colours}, $\lambda_{M_{v_{n-1}v_n}}(v_{n-1})=1$. As $|N_{M_{v_{n-1}v_n}}(a)| > 0$ for every $a\in L(v_n)$,   $M_{v_{n-1}v_n}$ is a matching and $|L(v_{n-1})| \ge |L(v_n)|\ge4$.

As $|L(v_1)|\ge3$ and $(L,M)$ is $F$-valid, there exists $a\in L(v_1)$ such that $|N_{M_{v_1 v_2}}(a)|\le 1$. 
Let $G'=G_0-v_1$ and $(L',M')=(L,M)|_{G'} - N_M(a)$. Then it remains to show that $G'$ is $(L',M')$-colourable. 

Since $d_{G'}(v_n)\ge4$, either $v_1v_n \notin E(G)$ and   $|L'(v_n)| = |L(v_n)|\ge 4$, 
or $v_1v_n \in E(G)$ and hence $|L(v_n)|=5$ and $|L'(v_n)| \ge  |L(v_n)| -1 \ge 4$. If $|L'(v_2)|\le3$, then we can choose a node $b\in L'(v_n)$ such that  $N_{M'_{v_2 v_n}}(b)=\emptyset$. If $|L'(v_2)|\ge 4$, then let $b$ be an arbitrary node in $L'(v_n)$.

Let $G''=G'-v_n$ and $(L'',M'')=(L',M')|_{G''} - N_{M'}(b)$. Now it suffices to show that $G''$ is $(L'',M'')$-colourable. 

By the choice of $a$ and $b$, we know that
\begin{align*}
    |L''(v_{i})| \ge  \begin{cases}   2, &\text{if $i\in\{2, n-1\}$}, \cr 
    \min \{4, \sum_{e \in E_{G''}(v_{i})}\lambda_{M''_e}(v_{i})\}, &\text{if $i\in\{3, \dots, n-2\}$.}
\end{cases} 
\end{align*}

Moreover, if $|L''(v_2)|=2$, then $\lambda_{M''_{v_2v_3}}(v_2)=\lambda_{M_{v_2v_3}}(v_2)=1$.
By Lemma~\ref{lem-path}, $G''$ is $(L'',M'')$-colourable, a contradiction.
\end{proof}

\begin{lem}
\label{lem-case2}
     If $G_0 \in \{G^{(+)}_{n, r, s}: r \ge 2, s \ge 3, r+s=n-1\}$ and $|L(v_{s+1})| = 5$, then $F$ is not a subset of the spine.
\end{lem}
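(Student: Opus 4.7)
My plan is to mimic the proof of Lemma~\ref{lem-case1} closely. Since $r+s=n-1$, the vertex $v_{s+1}$ has the four neighbours $v_s,v_{s+2},v_1,v_n$; the non-spine edges $v_{s+1}v_1$ and $v_{s+1}v_n$ are not in $F$ and hence are matchings, and $d_{G_0}(v_i)=3$ for every internal $v_i\neq v_{s+1}$, while $d_{G_0}(v_1)\ge 3$ and $d_{G_0}(v_n)\ge 4$. The hypothesis $|L(v_{s+1})|=5$ together with the minimality of the counterexample gives $\sum_{e\in E_{G_0}(v_{s+1})}\lambda_{M_e}(v_{s+1})\ge 5$, so the two spine edges at $v_{s+1}$ together contribute at least $3$ to $\lambda(v_{s+1})$.

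I will split the argument exactly as in Lemma~\ref{lem-case1} into Case~1, where there exists $a\in L(v_n)$ with $|N_{M_{v_{n-1}v_n}}(a)|\le \min\{1,\lambda_{M_{v_{n-1}v_n}}(v_{n-1})-1\}$, and Case~2, its complement (which, by Lemma~\ref{lem-2colours}, forces $M_{v_{n-1}v_n}$ to be a matching and $|L(v_n)|,|L(v_{n-1})|\ge 4$). In each case I would colour $v_n$ with $a$, then pick $b\in L'(v_1)$ in the same two-step fashion as in Lemma~\ref{lem-case1}, remove both vertices together with their matching-neighbours, and apply Lemma~\ref{lem-path} to the resulting path $v_2v_3\ldots v_{n-1}$. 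For every vertex $v_i$ with $i\neq s+1$, the verification of list sizes is essentially identical to that in Lemma~\ref{lem-case1}; the novelty concerns only $v_{s+1}$.

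After the two colourings we have $|L''(v_{s+1})|\ge |L(v_{s+1})|-2=3$, because $M_{v_nv_{s+1}}$ and $M_{v_1v_{s+1}}$ are matchings contributing at most one hit each. The path condition requires $|L''(v_{s+1})|\ge \min\{4,\sum_{e\in E_{G''}(v_{s+1})}\lambda_{M''_e}(v_{s+1})\}$. When $\lambda_{M_{v_sv_{s+1}}}(v_{s+1})+\lambda_{M_{v_{s+1}v_{s+2}}}(v_{s+1})\le 3$ the requirement is at most $3$ and Lemma~\ref{lem-path} applies directly. The principal obstacle is the sub-case $\lambda_{M_{v_sv_{s+1}}}(v_{s+1})=\lambda_{M_{v_{s+1}v_{s+2}}}(v_{s+1})=2$, in which we need $|L''(v_{s+1})|\ge 4$, i.e., at most one of the two colourings may hit $L(v_{s+1})$.

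To overcome this, I plan to refine the selection of $a$ and $b$ so that, in addition to their Case~1/Case~2 constraints, they are compatible with $v_{s+1}$. By Lemma~\ref{lem-2colours} applied to the edges $v_{n-1}v_n$ and $v_1v_2$, at most two nodes of $L(v_n)$ and at most two nodes of $L(v_1)$ are ruled out by those constraints, leaving a pool of candidates. I then argue by a pigeonhole on the two matchings $M_{v_nv_{s+1}}$ and $M_{v_1v_{s+1}}$ that either (i) some admissible $a$ is unmatched by $M_{v_nv_{s+1}}$ to $L(v_{s+1})$, or, failing that, (ii) some admissible $a$ and $b$ both hit the \emph{same} colour $c\in L(v_{s+1})$, so that the net loss at $v_{s+1}$ is still only $1$. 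In either event $|L''(v_{s+1})|\ge 4$, and Lemma~\ref{lem-path} yields an $(L'',M'')$-colouring of the path $v_2v_3\ldots v_{n-1}$, which extends to an $(L,M)$-colouring of $G_0$, contradicting the choice of counterexample. This delicate bipartite-matching pigeonhole is the hard technical step; the rest is bookkeeping inherited from Lemma~\ref{lem-case1}.
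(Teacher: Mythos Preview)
Your plan diverges from the paper's, and the step you yourself flag as ``the hard technical step'' does not go through as sketched.

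Your proposal is to follow Lemma~\ref{lem-case1} verbatim---colour $v_n$ by an admissible $a$, then $v_1$ by an admissible $b$, and apply Lemma~\ref{lem-path} to $v_2\ldots v_{n-1}$---while refining the choice of $a,b$ so that together they delete at most one node from $L(v_{s+1})$. The problem is that the admissible pools for $a$ and for $b$ can each have size exactly~$1$. In your Case~1 with $\lambda_{M_{v_{n-1}v_n}}(v_{n-1})=\lambda_{M_{v_{n-1}v_n}}(v_n)=1$ (so $M_{v_{n-1}v_n}$ is a plain matching), the constraint on $a$ is $N_{M_{v_{n-1}v_n}}(a)=\emptyset$; Lemma~\ref{lem-2colours} gives no ``at most two bad nodes'' bound here, and there may be a single unmatched node. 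Likewise, after fixing $a$, the set of admissible $b\in L'(v_1)$ can reduce to one node. If that unique $a$ hits $c_1\in L(v_{s+1})$ via $M_{v_nv_{s+1}}$ and the unique $b$ hits $c_2\ne c_1$ via $M_{v_1v_{s+1}}$, neither option (i) nor (ii) applies and you are stuck with $|L''(v_{s+1})|=3<4$. Your appeal to Lemma~\ref{lem-2colours} to get pools of size at least $|L|-2$ is valid only when the $\lambda$ on the far side is $\ge 2$; the matching case is not covered.

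The paper avoids this obstacle by not trying to protect $L(v_{s+1})$ through clever choices of $a$ and $b$. Instead it uses the hypothesis $|L(v_{s+1})|=5$ to colour $v_{s+1}$ \emph{first} in the critical cases. The case split is on $|L(v_1)|,|L(v_n)|$. If, say, $|L(v_n)|\le 4$, then since $M_{v_{s+1}v_n}$ is a matching and $|L(v_{s+1})|>|L(v_n)|$ one picks $a\in L(v_{s+1})$ with $N_{M_{v_{s+1}v_n}}(a)=\emptyset$; removing $v_{s+1}$ yields the removable sequence $(v_n,\ldots,v_{s+2},v_1,\ldots,v_s)$. If $|L(v_1)|=|L(v_n)|=5$ and all $|L(v_i)|=5$, one colours $v_{s+1}$ by $a$ with $|N_M(a)\cap L(v_s)|\le 1$ and $|N_M(a)\cap L(v_{s+2})|\le 1$ (at most $2+2$ forbidden out of $5$), then $v_1$ by $b\in L(v_1)\setminus N_M(a)$, then $v_n$ by $c\in L(v_n)\setminus(N_M(a)\cup N_M(b))$, and applies Lemma~\ref{lem-path} to the two disjoint paths $v_2\ldots v_s$ and $v_{s+2}\ldots v_{n-1}$. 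In every branch, $v_{s+1}$ is never left as an interior vertex of a path whose list might be too short.
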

\begin{proof}

Suppose, to the contrary, that $G_0 = G^{(+)}_{n, r, s}$ with $r \ge 2$, $s \ge 3$, $r+s=n-1$,   $|L(v_{s+1})|=5$ and $F$ is a subset of edges of the spine $v_1 v_2 \dots v_n$.
In this case, $d_{G_0}(v_i)=3$ for $i \in \{2, \dots, s\} \cup \{s+2, \dots, n-1\}$ and $d_{G_0}(v_{s+1})=4$.

\medskip
\noindent
{\bf Case 1:} $|L(v_1)|\le 4$ or $|L(v_n)|\le 4$.

We assume that $|L(v_n)| \le 4$, and the case for $|L(v_1)| \le 4$ can be proved similarly.
Since  $|L(v_{s+1})|=5>|L(v_n)|$ and $M_{v_{s+1} v_n}$ is a matching, we can choose a node $a\in L(v_{s+1})$ such that $N_{M_{v_{s+1} v_n}}(a)=\emptyset$. 
Let $G'=G_0-v_{s+1}$ and $(L',M')=(L,M)|_{G'} - N_M(a)$. So it remains to show that $G'$ is $(L',M')$-colourable. 

Observe that since $|L(v_n)| \le 4$, \begin{align*}
|L'(v_n)|&=|L(v_n)|=\sum_{e \in E_{G_0}(v_n)}
\lambda_{M_e}(v_n)>\sum_{e \in E_{G'}(v_n)}
\lambda_{M'_e}(v_n).
\end{align*} It follows from Lemma~\ref{lem-remove} that $(v_n,v_{n-1},\ldots,v_{s+2},v_1,v_2,\ldots,v_s)$  is a removable sequence with respect to $(L',M')$, a contradiction.

\medskip
\noindent
{\bf Case 2:} $|L(v_1)|=5$ and $|L(v_n)|=5$.
  
Assume first that $|L(v_t)|\le 4$ for some $t$ with $2 \le t \le s$. Choose a node $a \in L(v_n)$ such that $N_{M_{v_n v_t}}(a) = \emptyset$. Let $G'=G_0-v_n$ and let $(L', M')= (L,M)|_{G'} - N_{M}(a)$. It suffices to prove that $G'$ is $(L', M')$-colourable. Since \begin{align*}
    |L'(v_t)|&=|L(v_t)|=\sum_{e \in E_{G_0}(v_t)}
\lambda_{M_e}(v_t) >\sum_{e \in E_{G'}(v_t)}
\lambda_{M'_e}(v_t),
\end{align*} 
It follows from Lemma~\ref{lem-remove} that $(v_t,v_{t-1}, \ldots, v_{2}, v_{t+1},v_{t+2}, \ldots, v_{s})$ is a removable sequence with respect to $(L',M')$. 
Thus it suffices to prove that $G'-\{v_2,\ldots,v_s\}$ is $(L',M')|_{G'-\{v_2,\ldots,v_s\}}$-colourable.

If $|L'(v_{n-1})|=2$, then there exists $b \in L'(v_1)$ with   $N_{M'_{v_1 v_{n-1}}}(b) = \emptyset$ since $|L'(v_1)| \ge |L(v_1)|-1 = 4$. If $L'(v_{n-1})\ge 3$, then we take $b$ to be an arbitrary node in $L'(v_1)$. Let $G''=G'-\{v_1,v_2,\ldots,v_s\}$ and $(L'',M'') = (L',M')|_{G''} - N_{M'}(b)$. It suffices to show $G''=v_{s+1}v_{s+2}\ldots v_{n-1}$ is $(L'',M'')$-colourable. By the choice of $b$, we have \begin{align*}
    |L''(v_{i})| \ge 
    \begin{cases} 
    5-2=3, &\text{if $i=s+1$}, \cr
    |L(v_{i})|-1, &\text{if $i \in \{s+2,\ldots,n-2\}$}, \cr
    2, &\text{if $i=n-1$.}
    \end{cases}
\end{align*} By Lemma~\ref{lem-path},  $G''$ is $(L'',M'')$-colourable,  a contradiction.

The case that $|L(v_t)|\le 4$ for some $t$ with $s+2\le t \le n-1$ can be proved similarly. Thus we have that $|L(v_i)| = 5$ for all $1 \le i \le n$.  

By Lemma~\ref{lem-2colours},   
there exists a node $a \in L(v_{s+1})$ such that 
$|N_M(a) \cap L(v_{s})| \le 1$ and $|N_M(a) \cap L(v_{s+2})| \le 1$. Moreover, since $|L(v_1)|=5$, there exists $b \in L(v_1)-N_M(a)$ such that $|N_M(b) \cap L(v_{2})| \le 1$. As $|L(v_n)-N_M(a)-N_M(b)|\ge 5-2=3$, we can choose $c \in L(v_n)-N_M(a)-N_M(b)$ such that $|N_M(c) \cap L(v_{n-1})| \le 1$. Let $G' = G_0 - \{v_1, v_{s+1}, v_n\}$ and $(L',M')=(L,M)|_{G'} - N_M(a) \cup N_M(b) \cup N_M(c)$. Hence, it suffices to show that $G'$ is $(L',M')$-colourable. Note that $G'$ is the union of two disjoint paths $P_1 = v_2 \ldots v_s$ and $P_2 = v_{s+2} \ldots v_{n-1}$.

By the choice of $a$, $b$ and $c$, we know that $|L'(v_i)| \ge 5 - 2 = 3$ for every $i \in \{2, s, s+2, n-1\}$. Therefore, by Lemma~\ref{lem-path}, $P_1$ and $P_2$ are $(L',M')|_{P_1}$-colourable and $(L',M')|_{P_2}$-colourable, respectively. This implies that $G'$ is $(L',M')$-colourable, a contradiction.
\end{proof}

\begin{lem}
     \label{lem-case9}
     $G_0 \ne   G_{6,2,3}$.
 \end{lem}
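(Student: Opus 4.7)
The plan is to mimic the strategy of Lemmas~\ref{lem-case1} and~\ref{lem-case2}, exploiting the symmetry of the two degree-$4$ vertices $v_4$ and $v_6$ of $G_{6,2,3}$. By Lemmas~\ref{lem-case1} and~\ref{lem-case2}, the case $F \subseteq \{v_1v_2, v_2v_3, v_3v_4, v_4v_5, v_5v_6\}$ (the first spine of $G_{6,2,3}$) is already excluded. By Lemma~\ref{lem-sub}, the only other maximal subdividable set is the second spine $\{v_3v_4, v_2v_3, v_1v_2, v_1v_5, v_5v_6\}$, so we may assume $F$ lies in it. Then the edges $v_1v_4$, $v_4v_5$, $v_2v_6$, $v_3v_6$, $v_4v_6$ all carry a matching $M_e$. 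In particular, the two degree-$4$ vertices $v_4$ and $v_6$ are joined by a matching edge $v_4v_6$, each has exactly one incident edge in $F$ (namely $v_3v_4$ and $v_5v_6$), and they play symmetric roles. A crucial structural observation is that deleting $v_4$ and $v_6$ from $G_0$ leaves precisely the path $v_3 v_2 v_1 v_5$, which is the target for an application of Lemma~\ref{lem-path}.

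I would split the argument into two cases according to $|L(v_4)|$ and $|L(v_6)|$. In Case~1, when at least one of them is at most $4$—by symmetry, say $|L(v_6)| \le 4$—we have $|L(v_6)| = \sum_{e \in E_{G_0}(v_6)} \lambda_{M_e}(v_6)$. Using Lemma~\ref{lem-2colours} I would select $a \in L(v_4)$ keeping $|N_M(a) \cap L(v_6)|$ as small as possible (made easy by the fact that $M_{v_4v_6}$ is a matching), set $\phi(v_4) = a$, and then remove $v_6$ together with the rest of the vertices through a removable sequence obtained by repeated application of Lemma~\ref{lem-remove}, closely following the template used in Case~1 of Lemma~\ref{lem-case2}.

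In Case~2, when $|L(v_4)| = |L(v_6)| = 5$, the identity $|L(v_i)| = \min\{5, \sum_e \lambda_{M_e}(v_i)\}$ forces $\lambda_{M_{v_3v_4}}(v_4) \ge 2$ and $\lambda_{M_{v_5v_6}}(v_6) \ge 2$, since the three remaining edges at $v_4$ (respectively $v_6$) each contribute exactly $1$ to the weighted degree. I would pick $a \in L(v_4)$ and $b \in L(v_6)$ with $ab \notin M_{v_4v_6}$, chosen via Lemma~\ref{lem-2colours} to minimise the conflicts they create on the lists of $v_1, v_3, v_5$ (for $a$) and of $v_2, v_3, v_5$ (for $b$). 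Setting $\phi(v_4) = a$, $\phi(v_6) = b$ and deleting both vertices, I would then argue that the residual cover on the path $v_3 v_2 v_1 v_5$ satisfies the hypotheses of Lemma~\ref{lem-path}, which finishes the colouring.

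The main obstacle is Case~2 in subcases where some of the degree-$3$ vertices $v_1, v_2, v_3, v_5$ have lists of size only $3$ or $4$, so that the endpoint list sizes $|L'(v_3)|$ and $|L'(v_5)|$ on the residual path become tight. To invoke Lemma~\ref{lem-path} there, I would need to secure the auxiliary endpoint condition $\lambda_{M'_{v_3 v_2}}(v_3) = 1$ or $\lambda_{M'_{v_1 v_5}}(v_5) = 1$, which calls for casework on whether $M_{v_3v_4}$ and $M_{v_5v_6}$ are pure matchings, matching $\cup\, K_{1,2}$, or matching $\cup\, K_{2,2}$, and on which side their degree-$2$ or degree-$3$ nodes lie. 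The ``moreover'' clause in the definition of $F$-valid cover (which boosts $|L|$ to $5$ whenever a node of $M_e$-degree $3$ appears) should supply the remaining slack, just as in the endgame of Lemma~\ref{lem-case2}. No genuinely new idea is required beyond what is used there, but the subcase verification is where essentially all of the work sits.
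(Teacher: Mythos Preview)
Your case split (Case~1: $\min\{|L(v_4)|,|L(v_6)|\}\le 4$; Case~2: both $=5$) diverges from the paper's, and Case~1 as you outline it has a genuine hole. When $|L(v_4)|=|L(v_6)|=4$ your plan is still to colour $v_4$ by some $a\in L(v_4)$ with $|N_M(a)\cap L(v_6)|$ as small as possible and then make $v_6$ removable. But $M_{v_4v_6}$ may be a perfect matching between the two $4$-element lists, forcing $|N_M(a)\cap L(v_6)|=1$; after deleting $v_4$ one has $|L'(v_6)|=3=\sum_{e\in E_{G'}(v_6)}\lambda_{M'_e}(v_6)$ (since $\lambda_{M_{v_5v_6}}(v_6)=1$ here), so $v_6$ is \emph{not} removable and Lemma~\ref{lem-remove} does not start. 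The paper handles precisely this sub-case in its Cases~2 and~3 by invoking Lemma~\ref{lem-degree-DP} to locate a degree-$3$ vertex $v_i$ ($i\in\{1,2,3,5\}$) with $|L(v_i)|=5$, colouring \emph{that} vertex first so as to miss $L(v_6)$, and then running a removable sequence beginning at $v_6$. Your template from Case~1 of Lemma~\ref{lem-case2} does not transfer because that lemma has $|L(v_{s+1})|=5$ as a standing hypothesis, which you do not have here.

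There is also a smaller issue in your Case~2. The ``moreover'' clause you invoke (a degree-$3$ node of $M_e$ forces the opposite list to size $5$) is part of the definition of a \emph{valid} cover of a broken outerplanar graph, not of an $F$-\emph{valid} cover of $G_0$; it is not available to you here. The paper sidesteps the difficulty you anticipate---small $|L(v_j)|$ for $j\in\{1,2,3,5\}$ when $|L(v_4)|=|L(v_6)|=5$---by switching to a removable-sequence argument in those sub-cases and reserving the Lemma~\ref{lem-path} endgame on $v_3v_2v_1v_5$ for the single situation where \emph{all} six lists have size~$5$. That reorganisation is what makes the verification short; trying to push Lemma~\ref{lem-path} through in every sub-case would indeed require the extra casework you describe, without the safety net you were counting on.
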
 

\begin{proof}
Suppose to the contrary that $G_0=G_{6,2,3}$. It follows from Lemma~\ref{lem-sub} that $F$ is a subset of the spine $v_1v_2...v_6$ or a subset of the second spine $v_4v_3v_2v_1v_5v_6$. 
By Lemma~\ref{lem-case1} and Lemma~\ref{lem-case2},  $F$ is not a subset of the spine and hence it is a subset of the second spine $v_4v_3v_2v_1v_5v_6$.

By Lemma~\ref{lem-degree-DP}, there exists some vertex $v_i$ with $|L(v_i)|=5$ for $i\in [6]$. 

\medskip\noindent
{\bf Case 1:} $|L(v_4)|=5$ or $|L(v_6)|=5$. 

By symmetry, we assume that $|L(v_4)|=5$.

Assume $|L(v_6)|=d_{G_0}(v_6)=4$. Let $a\in L(v_4)$ such that $N_M(a)\cap L(v_6)=\emptyset$.  Let $G'=G_0-v_4$ and  $$(L', M')= (L,M)|_{G'}- N_{M}(a).$$ Thus $(v_6, v_5, v_1, v_2, v_3)$ is a removable sequence with respect to $(L',M')$  and $G_0$ is $(L,M)$-colourable by Lemma~\ref{lem-remove} and Lemma~\ref{obs-1}, a contradiction. So $|L(v_4)|=|L(v_6)|=5$. 

Suppose $|L(v_3)|\le 4$. Let $a\in L(v_4)$ such that $|N_M(a)\cap L(v_3)|<\lambda_{M_{v_3v_4}}(v_3)$. Let $G'=G_0-v_4$ and  $$(L', M')= (L,M)|_{G'} - N_{M}(a).$$ Thus $(v_3, v_2, v_1, v_5, v_6)$ is a removable sequence with respect to $(L',M')$ by Lemma~\ref{lem-remove}. Thus $G_0$ is $(L,M)$-colourable by Lemma~\ref{obs-1}, a contradiction. Similarly, one can show that $|L(v_j)|=5$ for any $j \in \{1,2,5\}$. Hence we have $|L(v_j)|=5$ for all $j\in [6]$.

Now let $a\in L(v_4)$ such that $|N_M(a)\cap L(v_3)|\le 1$ and let $b\in L(v_6)-N_M(a)$ such that $|N_M(b)\cap L(v_5)|\le 1$.  Let $G'=G_0-v_4-v_6$ and  $$(L', M')= (L,M)|_{G'}- (N_{M}(a)\cup  N_{M}(b)).$$

By the choice of $a$ and $b$, $|L'(v_i)|\ge 3$ for $i=3, 5$ and $|L'(v_j)|\ge 4$ for $j=1,2$. Thus $G'=v_3v_2v_1v_5$ is $(L',M')$-colourable by Lemma~\ref{lem-path}, a contradiction. 

\medskip\noindent
{\bf Case 2:} $|L(v_4)|\le4$, $|L(v_6)|\le4$ and $|L(v_i)|=5$ for some $i\in\{1,2\}$.

Without loss generality, suppose $|L(v_2)|=5$. Let $a\in L(v_2)$ such that $N_M(a)\cap L(v_6)=\emptyset$.  Let $G'=G_0-v_2$ and  $(L', M')= (L,M)|_{G'}-  N_{M}(a)$. Thus $(v_6, v_5, v_4, v_3, v_1)$ is a removable sequence with respect to $(L',M')$ by Lemma~\ref{lem-remove}. Thus $G_0$ is $(L,M)$-colourable by Lemma~\ref{obs-1}, a contradiction.

\medskip\noindent
{\bf Case 3:} $|L(v_4)|\le4$, $|L(v_6)|\le4$ and $|L(v_i)|=5$ for some $i\in\{3,5\}$.

Without loss generality, suppose $|L(v_3)|=5$. Let $a\in L(v_3)$ such that $N_M(a)\cap L(v_6)=\emptyset$.  Let $G'=G_0-v_3$ and  $(L', M')= (L,M)|_{G'}-  N_{M}(a)$. Thus $(v_6, v_5, v_1, v_2, v_4)$ is a removable sequence with respect to $(L',M')$ by Lemma~\ref{lem-remove}. Thus $G_0$ is $(L,M)$-colourable by Lemma~\ref{obs-1}, a contradiction.
\end{proof}

\begin{lem}
     \label{lem-case10}
    $G_0 \notin \mathcal{G}$. 
 \end{lem}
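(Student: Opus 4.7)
The plan is to combine Lemma~\ref{lem-sub} with the preceding case-lemmas to exhaust all possibilities for $G_0 \in \mathcal{G}$. Since $F$ is subdividable, Lemma~\ref{lem-sub} confines $F$, up to automorphism, to one of the maximal subdividable sets of $G_0$: the spine; the second spine, occurring only for $G_0 \in \{G_{n,2,n-3} : n \ge 6\} \cup \{G^{+}_{n,2,n-4} : n \ge 7\}$; or the special set $\{v_1v_2, v_4v_5, v_6v_7, v_3v_7\}$ when $G_0 = G_{7,2,3}$. Lemmas~\ref{lem-case1} and~\ref{lem-case2} together already rule out $F$ being contained in the spine of any $G_0 \in \mathcal{G}$, and Lemma~\ref{lem-case9} disposes of $G_0 = G_{6,2,3}$.

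For the remaining second-spine cases (which all require $n \ge 7$), I would relabel along the second spine. Setting $u_i = v_{n-1-i}$ for $1 \le i \le n-3$, $u_{n-2} = v_1$, $u_{n-1} = v_{n-1}$ and $u_n = v_n$, a direct edge-by-edge check verifies that the abstract graph $G_{n,2,n-3}$ is realised as $G^{+}_{n,2,n-4}$ with the new spine $u_1u_2\ldots u_n$, and symmetrically that $G^{+}_{n,2,n-4}$ is realised as $G_{n,2,n-3}$; in both directions the non-spine edges and the ``$+$'' edge match up correctly. Since the vertex set and the cover $(L,M)$ are unchanged, $F$-validity is preserved and $F$ now lies in the new spine. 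Applying Lemma~\ref{lem-case1} (to the new $G^{+}_{n,2,n-4}$, in which $r+s=n-2$) or Lemmas~\ref{lem-case1} and~\ref{lem-case2} together (to the new $G_{n,2,n-3}$, according to whether the new list-size at position $s+1$ is $4$ or $5$) then produces a contradiction.

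It remains to treat $G_0 = G_{7,2,3}$ with $F \subseteq \{v_1v_2, v_4v_5, v_6v_7, v_3v_7\}$, which I would handle directly, in the spirit of Lemma~\ref{lem-case9}. Each vertex $v \ne v_7$ has degree $3$ in $G_0$ and is incident to at most one $F$-edge, so $\sum_{e \in E_{G_0}(v)}\lambda_{M_e}(v) \le 2 + 3 = 5$ and therefore $|L(v)| = \sum_{e \in E_{G_0}(v)}\lambda_{M_e}(v)$ for every such $v$. Lemma~\ref{lem-degree-DP} then forces $|L(v_7)| = 5 < \sum_{e \in E_{G_0}(v_7)}\lambda_{M_e}(v_7)$, and in particular $\lambda_{M_{v_3v_7}}(v_7) + \lambda_{M_{v_6v_7}}(v_7) \ge 4$. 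Using Lemma~\ref{lem-2colours}, I would pick a node $a \in L(v_7)$ with small $|N_M(a) \cap L(v_j)|$ for each $j \in \{2,3,4,6\}$, then pick $b \in L(v_1)\setminus N_M(a)$ with small $|N_M(b) \cap L(v_j)|$ for $j \in \{2,5,6\}$. After removing $v_1, v_7$ together with $N_M(a) \cup N_M(b)$ from the cover, the residual graph is the path $v_2v_3v_4v_5v_6$, which is coloured via Lemma~\ref{lem-path}.

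The main obstacle is this last case. Whereas the second-spine cases reduce cleanly by a global relabeling, the special set $\{v_1v_2, v_4v_5, v_6v_7, v_3v_7\}$ is not a Hamiltonian path, so no such trick applies, and $G_{7,2,3} - v_7$ is a $6$-cycle together with the chord $v_1v_5$ rather than a path, forcing the simultaneous elimination of $v_1$. The delicate accounting of how many colours remain at each of $v_2, v_3, v_4, v_5, v_6$ after removing $N_M(a) \cup N_M(b)$ will require a case-split on the values of $\lambda_{M_{v_3v_7}}(v_7)$ and $\lambda_{M_{v_6v_7}}(v_7)$, which is the technical heart of this case.
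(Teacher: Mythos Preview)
Your reduction of the second-spine cases by the relabeling $u_i = v_{n-1-i}$ (equivalently the isomorphism $\sigma_n$) is exactly what the paper does, and your citation of Lemmas~\ref{lem-case1} and~\ref{lem-case2} for the spine case is correct.

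For the exceptional case $G_0 = G_{7,2,3}$ with $F \subseteq \{v_1v_2, v_4v_5, v_6v_7, v_3v_7\}$, the paper takes a much shorter route than your two-vertex deletion followed by Lemma~\ref{lem-path}. By Lemma~\ref{obs-2} (not just the crude bound $\sum \lambda \le 5$), every $v_i$ with $i \in [6]$ satisfies $|L(v_i)| \le 4$, so Lemma~\ref{lem-degree-DP} forces $|L(v_7)| = 5$. One then chooses a single node $a \in L(v_7)$ with $|N_M(a) \cap L(v_6)| < \lambda_{M_{v_6v_7}}(v_6)$, deletes only $v_7$, and checks that $(v_6, v_5, v_4, v_3, v_2, v_1)$ is a removable sequence via Lemma~\ref{lem-remove}; the identity $|L(v_i)| = \sum_{e}\lambda_{M_e}(v_i)$ that you already recorded is precisely what makes each application of Lemma~\ref{lem-remove} go through. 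No second deletion, no path-colouring, and no case split are needed.

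Your plan is not wrong in principle, but it is genuinely harder than you suggest. After deleting both $v_7$ and $v_1$, the endpoint estimates at $v_2$ and $v_6$ in Lemma~\ref{lem-path} are tight: a naive choice of $a$ and $b$ only yields $|L'(v_2)|, |L'(v_6)| \ge 1$, so you must simultaneously control $|N_M(a) \cap L(v_6)|$, $|N_M(b) \cap L(v_2)|$, and several other quantities, and the available room in $L(v_7)$ and $L(v_1)$ for all these constraints at once is thin. The removable-sequence argument sidesteps this entirely.
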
 

\begin{proof}
Define $\sigma_n(v_i) = v_{n-i-1}$ for $i \in \{1, \dots, n-2\}$, $\sigma_n(v_{n-1}) = v_{n-1}$ and $\sigma_n(v_n)=v_n$. Note that for $n \ge 7$, $\sigma_n$ is an isomorphism between $G_{n, 2, n-3}$ and $G_{n, 2, n-4}^+$ such that the second spine of $G_{n, 2, n-3}$ (respectively, $G_{n, 2, n-4}^+$) corresponds to the spine of $G_{n, 2, n-4}^+$ (respectively, $G_{n, 2, n-3}$). Therefore it follows from Lemmas~[\ref{lem-sub}, \ref{lem-case1}--\ref{lem-case9}] that if $G_0 \in \mathcal{G}$, then  $G_0=G_{7,2,3}$ and $F$ is a subset of $\{v_1v_2,v_4v_5,v_6v_7,v_3v_7\}$.

For $i\in [6]$, since $d_{G_0}(v_i)=3$ and $|E_{G_0}(v_i)\cap F|=1$,   we have $|L(v_i)|\le 4$ by Lemma~\ref{obs-2}. Moreover, it follows from Lemma~\ref{lem-degree-DP} that $|L(v_7)|=5$.
 
By Lemma~\ref{lem-2colours}, there is a node $a\in L(v_7)$ such that $|N_M(a)\cap L(v_6)|<\lambda_{M_{v_6v_7}}(v_6)$.

Let $G'=G_0-v_7$ and $(L',M')=(L,M)|_{G'}- N_M(a)$. Since $$|L'(v_6)|>|L(v_6)|-\lambda_{M_{v_6v_7}}(v_6)=\sum_{e \in E_{G'}(v_6)}\lambda_{M'_e}(v_6),$$   $(v_6, v_5,v_4,v_3,v_2, v_1)$  is a removal sequence with respect to $(L',M')$ by Lemma~\ref{lem-remove}. Hence $G_0$ is $(L,M)$-colourable by Lemma~\ref{obs-1}, a contradiction. 
\end{proof}

\subsection{$G_0 \notin \mathcal{G}'$}

\begin{lem} \label{lem-K5}
    $G_0 \ne K_5$.
\end{lem}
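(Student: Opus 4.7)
The plan is to derive an immediate contradiction by combining the characterization of subdividable edges in $K_5$ with Lemma~\ref{lem-degree-DP}.

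First I would invoke Lemma~\ref{lem-sub}, which states that $K_5$ has no subdividable edge. Since $F \subseteq E(G_0)$ is a subdividable edge set, this forces $F = \emptyset$. Consequently, by the definition of an $F$-valid cover, $M_e$ is a matching for every edge $e \in E(G_0)$, so $\lambda_{M_e}(v) = 1$ for every $e \in E_{G_0}(v)$. This gives
\[
\sum_{e \in E_{G_0}(v)} \lambda_{M_e}(v) \;=\; d_{G_0}(v) \;=\; 4
\]
for every vertex $v \in V(G_0)$.

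By the choice of the counterexample minimizing $\sum_{v} |L(v)|$, we have $|L(v)| = \min\{5, \sum_{e \in E_{G_0}(v)} \lambda_{M_e}(v)\} = 4$ for every vertex $v$. But Lemma~\ref{lem-degree-DP} asserts that there must exist a vertex $v$ with $|L(v)| = 5 < \sum_{e \in E_{G_0}(v)} \lambda_{M_e}(v)$, which is impossible since the right-hand side equals $4$ everywhere. This contradiction shows $G_0 \neq K_5$.

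There is essentially no obstacle here since $K_5$ admits no subdivision (all edges of $K_5$ sit symmetrically and subdividing any creates a $K_{2,4}$-minor), reducing the case to the degree-list regime where Lemma~\ref{lem-degree-DP} applies directly. As a sanity check, one could alternatively appeal to Theorem~\ref{thm-DPdegree}: with $F = \emptyset$ and $|L(v)| = d_{G_0}(v) = 4$, condition~(4) of $F$-validity guarantees that at least one $M_e$ is not a perfect matching, so $K_5$ is $(L,M)$-colourable by Theorem~\ref{thm-DPdegree}, contradicting the assumption.
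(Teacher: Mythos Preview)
Your proof is correct and essentially matches the paper's approach: both start from Lemma~\ref{lem-sub} to get $F=\emptyset$, and the contradiction you obtain via Lemma~\ref{lem-degree-DP} is just a repackaging of the paper's direct appeal to Theorem~\ref{thm-DPdegree} (which you also give as your alternative argument).
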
 
\begin{proof}
    If $G_0 = K_5$, then, by Lemma~\ref{lem-sub}, $F = \emptyset$ and hence $M_e$ is a matching for every edge $e$ of $G_0$. By our assumption that $G_0$ is $F$-valid, there is an edge $e$ such that $M_e$ is not a perfect matching. By Theorem \ref{thm-DPdegree}, $G_0$ is $(L,M)$-colourable, a contradiction.
\end{proof}
    
\begin{lem} \label{lem-K5-}
    $G_0 \ne K_5^-$.
\end{lem}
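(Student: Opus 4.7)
Suppose for contradiction that $G_0 = K_5^-$. Following the $G_{5,2,2}^+$ labelling of Lemma~\ref{lem-sub}, write $V(G_0) = \{v_1,v_2,v_3,v_4,v_5\}$ with $v_2v_4 \notin E(G_0)$; then $v_2,v_4$ are the only vertices of degree $3$, while $v_1,v_3,v_5$ induce a triangle and each have degree $4$. By Lemma~\ref{lem-sub}, $F$ is contained in one of the maximal subdividable sets of $K_5^-$ displayed in Table~\ref{tab-max-sub}; in particular $F$ avoids every edge of the triangle $v_1v_3v_5$ (since subdividing any edge of that triangle produces a $K_{2,4}$ minor, as the two triangle endpoints share $4$ common neighbours after subdivision). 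By Lemma~\ref{lem-degree-DP}, some vertex $v^{\star}$ satisfies $|L(v^{\star})|=5$ and $\sum_{e \in E_{G_0}(v^{\star})}\lambda_{M_e}(v^{\star}) \ge 6$, which forces at least one edge in $E_{G_0}(v^{\star}) \cap F$.

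My plan is a case analysis according to the identity of $v^{\star}$, largely paralleling the arguments of Lemma~\ref{lem-case6} and Lemma~\ref{lem-case1}. First, by Lemma~\ref{obs-2} applied to $v_2$ and $v_4$, any degree-$3$ vertex with at most one incident edge in $F$ must have $|L|\le 4$, which for each maximal subdividable set in Table~\ref{tab-max-sub} restricts $v^{\star}$ to a small list of possibilities. In each such situation I would:
\begin{enumerate}
    \item identify a neighbour $w$ of $v^{\star}$ with $|L(w)|\le 4$ (so that $|L(w)| = \sum_{e \in E_{G_0}(w)}\lambda_{M_e}(w)$), preferring one with $v^{\star}w \notin F$ so that $M_{v^{\star}w}$ is a matching;
    \item by Lemma~\ref{lem-2colours}, choose $a \in L(v^{\star})$ with $|N_M(a) \cap L(w)| < \lambda_{M_{v^{\star}w}}(w)$;
    \item set $\phi(v^{\star})=a$, pass to $G' = G_0 - v^{\star}$ with cover $(L',M') = (L,M)|_{G'} - N_M(a)$, and verify $|L'(w)| > \sum_{e \in E_{G'}(w)}\lambda_{M'_e}(w)$;
    \item apply Lemma~\ref{lem-remove} iteratively to produce a removable ordering of the four vertices of $G' \cong K_4^-$, starting at $w$; then invoke Lemma~\ref{obs-1} to conclude.
\end{enumerate}

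When no single vertex $w$ with $|L(w)|\le 4$ is adjacent to $v^{\star}$ via a convenient edge (i.e.\ when $|L(v_i)|=5$ for essentially all $i$), I instead delete two vertices in sequence: choose a non-adjacent pair $v^{\star}, v^{\star\star}$ (for example $v^{\star}=v_2, v^{\star\star}=v_4$), pick $a\in L(v^{\star})$ and $b\in L(v^{\star\star})\setminus N_M(a)$ each losing at most one colour at each remaining neighbour via Lemma~\ref{lem-2colours}, and then verify that the $3$-vertex path induced on the triangle $v_1v_3v_5$ (with the edge orientation determined by the remaining structure) meets the list-size hypotheses of Lemma~\ref{lem-path}. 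The main obstacle is organising the case split over maximal subdividable sets in Table~\ref{tab-max-sub} together with the distribution of the $|L(v_i)| = 5$ vertices; once the triangle-avoidance of $F$ is exploited, each case reduces to one of the two templates above and leads to the desired contradiction.
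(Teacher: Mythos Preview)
Your plan is essentially the paper's own approach: split over the maximal subdividable sets of $K_5^-$, use Lemma~\ref{obs-2} to bound $|L(v_2)|,|L(v_4)|$ where applicable, and then either precolour one vertex and exhibit a removable sequence on the remaining $K_4^-$, or precolour the nonadjacent pair $v_2,v_4$ and finish on $\{v_1,v_3,v_5\}$. Your explicit observation that every subdividable set of $K_5^-$ avoids the triangle $v_1v_3v_5$ is a clean way to organise the case split and makes the endgame transparent (all triangle edges have $\lambda=1$).

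One concrete slip: after deleting $v_2$ and $v_4$ the remaining graph is the \emph{triangle} $v_1v_3v_5$, not a path, so Lemma~\ref{lem-path} does not apply as stated. This is harmless, though, precisely because of your triangle-avoidance observation: since $v_1v_3,v_1v_5,v_3v_5\notin F$, each of these edges satisfies $\lambda_{M_e}(\cdot)=1$, and with $|L'(v_j)|\ge 3$ for $j\in\{1,3,5\}$ the sequence $(v_1,v_3,v_5)$ is directly removable (this is exactly what the paper does in its Subcase~1.1). A second point to watch when you fill in the details: in template~1 you may have $v^\star\in\{v_1,v_3,v_5\}$ with all neighbours satisfying $|L|=5$; the paper handles such situations not by template~2 but by a short progressive elimination (show first $|L(v_4)|\le 4$, then $|L(v_5)|\le 4$, etc.), so be prepared to chain several applications of template~1 rather than jump straight to deleting $v_2,v_4$.
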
 
\begin{proof}
    Suppose to the contrary that $G_0= K_5^-$. The maximal subdividable sets of $K_5^-$ are shown in Figure \ref{fig-sub-k_5-e}.

\medskip
\noindent
{\bf Case 1:}  
 $F$ is a subset of the maximal subdividable set in Figure~\ref{fig-sub-k_5-e}(a).

\medskip
\noindent
{\bf Subcase 1.1:}    $|L(v_5)|=5$.

Suppose that $|L(v_i)|\le 4$ for some $i\in [4]$. By Lemma~\ref{lem-2colours}, there exists  $a\in L(v_5)$ such that $|N_{M}(a)\cap L(v_i)|<\lambda_{M_{v_5v_i}}(v_i)$. Let $G'=G_0-v_5$ and  $(L', M')= (L,M)|_{G'}- N_{M}(a)$.
If $i\in\{1,3\}$, then $(v_1, v_4,v_3, v_2)$ or $(v_3, v_2,v_1, v_4)$ is a removable sequence with respect to $(L',M')$ by  Lemma~\ref{lem-remove}. If $i\in\{2,4\}$, then $(v_2, v_3,v_1, v_4)$ or $(v_4, v_1,v_3, v_2)$ is a   removable sequence with respect to $(L',M')$ by  Lemma~\ref{lem-remove}. So $G'$ is $(L',M')$-colourable by Lemma~\ref{obs-1}, a contradiction. Thus we have $|L(v_i)|=5$ for all $i\in [5]$.

By Lemma~\ref{obs-2} and Lemma~\ref{lem-2colours}, there exists $a\in L(v_2)$ such that for each $i\in \{3, 5\}$, 
\begin{align*}
    |N_M(a) \cap L(v_i)| \le \min\{1, \lambda_{M_{v_2v_i}}(v_i)-1\}.
\end{align*} Similarly, there exists $b\in L(v_4)$ such that for each $i\in \{1, 5\}$, 
\begin{align*}
    |N_M(b) \cap L(v_i)| \le \min\{1, \lambda_{M_{v_4v_i}}(v_i)-1\}.
\end{align*}

Let $G'=G_0-v_2-v_4$ and  $(L', M')= (L,M)|_{G'}- N_M(a) \cup N_M(b)$. By our choice of $a$ and $b$, we have $|L'(v_j)|\ge 3$ for all $j \in \{1,3,5\}$. Thus it is easy to see that $(v_1,v_3,v_5)$ is a removable sequence with respect to $(L',M')$, a contradiction. 
 
\medskip
\noindent
{\bf Subcase 1.2:} $|L(v_5)|=4$. 

Suppose $|L(v_2)|=5$ or $|L(v_4)|=5$. By symmetry, we may assume that $|L(v_2)|=5$. 
 
By Lemma~\ref{obs-2} and Lemma~\ref{lem-2colours}, there exists $a\in L(v_2)$ such that  
\begin{align*}
    |N_M(a) \cap L(v_3)| \le \min\{1, \lambda_{M_{v_2v_3}}(v_3)-1\}.
\end{align*}
 
Let $G'=G_0-v_2$ and $(L', M')= (L,M)|_{G'}- N_{M}(a)$. Thus $(v_3, v_5, v_4,v_1)$ is a removable sequence with respect to $(L',M')$ by Lemma~\ref{lem-remove} and hence $G'$ is $(L',M')$-colourable, a contradiction. 

We thus have $|L(v_2)| \le 4$ and $|L(v_4)| \le 4$. By Lemma~\ref{lem-degree-DP}, $|L(v_1)|=5$ or $|L(v_3)|=5$. By symmetry, we may assume that $|L(v_1)|=5$.

 Let $a\in L(v_1)$ such that $N_M(a)\cap L(v_5)=\emptyset$. Let $G'=G_0-v_1$ and $(L', M')= (L,M)|_{G'}-  N_{M}(a)$. Thus $(v_5, v_2, v_4,v_3)$ is a removable sequence with respect to $(L',M')$ by Lemma~\ref{lem-remove}. So, by Lemma~\ref{obs-1}, $G'$ is $(L',M')$-colourable  and hence $G_0$ is $(L,M)$-colourable.
 
\medskip
\noindent
{\bf Case 2:}  
 $F$ is a subset of the maximal subdividable set in Figure~\ref{fig-sub-k_5-e}(b) and not that in Figure~\ref{fig-sub-k_5-e}(a).

Note that it follows from the case condition that both $M_{v_1v_4}$ and $M_{v_3v_4}$ are not matchings.

Assume $|L(v_4)|=5$. 

Since $M_{v_1v_4}$ is not a matching, there exists $a\in L(v_4)$ such that $|N_M(a) \cap L(v_1)| < \lambda_{M_{v_1v_4}}(v_1)$. Let $G'=G_0-v_4$ and  $(L', M')= (L,M)|_{G'}- N_{M}(a)$. By the choice of $a$ and Lemma~\ref{lem-remove}, $(v_1, v_2, v_3,v_5)$ is a removable sequence with respect to $(L',M')$ and $G'$ is $(L',M')$-colourable, a contradiction. So $|L(v_4)|\le 4$.

 Assume  $|L(v_5)|=5$.

 Since $|L(v_4)|\le 4$, there exists $a\in L(v_5)$ such that $|N_M(a) \cap L(v_4)| < \lambda_{M_{v_4v_5}}(v_4)$. Let $G'=G_0-v_5$ and  $(L', M')= (L,M)|_{G'} - N_{M}(a)$. 
 Thus $(v_4, v_1, v_3,v_2)$ is a removable sequence with respect to $(L',M')$ and $G'$ is $(L',M')$-colourable by Lemma~\ref{lem-remove} and Lemma~\ref{obs-1}, a contradiction. Thus $|L(v_5)|=d_{G_0}(v_5)= 4$.

 Assume $|L(v_1)|=5$.
 
 Let $a\in L(v_1)$ such that $N_M(a)\cap L(v_5)=\emptyset$.
 Let $G'=G_0-v_1$ and  $(L', M')= (L,M)|_{G'}-  N_{M}(a)$. Since $L'(v_5)>\sum_{e \in E_{G'}(v_5)}\lambda_{M'_e}(v_5)$, $(v_5, v_2, v_4,v_3)$ is a removable sequence with respect to $(L',M')$ by Lemma~\ref{lem-remove}. Thus $G'$ is $(L',M')$-colourable and hence $G_0$ is $(L,M)$-colourable, a contradiction. Thus $|L(v_1)|\le4$. Similarly, we have $|L(v_3)|\le4$.

By Lemma~\ref{lem-degree-DP}, we must have $|L(v_2)| = 5$, which, however, contradicts Lemma~\ref{obs-2}.
\end{proof}

\begin{figure} [htbp]
\centering
\subfigure[]{
\begin{minipage} [t]{0.22\linewidth} \centering
\includegraphics [scale=1.3]{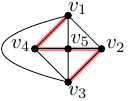} 
\end{minipage}
}
\subfigure[]{
\begin{minipage} [t]{0.22\linewidth} \centering
\includegraphics [scale=1.3]{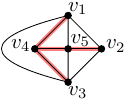} 
\end{minipage}
}
\caption{Maximal subdividable sets of $K_5^-$.} 
   \label{fig-sub-k_5-e}
\end{figure}

\begin{lem}
    \label{lem-prism}
    $G_0 \ne K_3 \Box K_2$. 
\end{lem}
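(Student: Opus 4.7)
The plan is to follow the template of Lemmas~\ref{lem-W_n}--\ref{lem-K5-}. Label the vertices of $K_3\Box K_2$ so that $v_1v_2v_3$ and $v_4v_5v_6$ are the two triangles with matching edges $v_iv_{i+3}$ for $i\in\{1,2,3\}$; by Lemma~\ref{lem-sub}, $F$ is contained in the maximal subdividable set of Table~\ref{tab-max-sub}, so every edge outside this set is covered by a matching. Since $G_0$ is $3$-regular, Lemma~\ref{lem-degree-DP} yields a vertex $v_j$ with $|L(v_j)|=5$, and Lemma~\ref{obs-2} forces exactly two of the three edges at $v_j$ to satisfy $\lambda_{M_e}(v_j)\ge 2$, with the third a matching.

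The general strategy is to pick a node $a\in L(v_j)$ carefully via Lemma~\ref{lem-2colours}, delete $v_j$ and $N_M(a)$ to form a residual cover $(L',M')$ of $G_0-v_j$, and verify that the remaining five vertices admit a removable sequence. Lemma~\ref{obs-1} would then produce an $(L,M)$-colouring of $G_0$, contradicting minimality. First I would dispatch the case in which some $v_k\ne v_j$ has $|L(v_k)|\le 4$ (so $|L(v_k)|=\lambda_{(L,M)}(v_k)$): choose $a$ so that $|N_M(a)\cap L(v_k)|<\lambda_{M_{v_jv_k}}(v_k)$ when $v_k$ is adjacent to $v_j$, and so that $N_M(a)\cap L(v_{j+3})=\emptyset$ when $v_k=v_{j+3}$ (exploiting that the corresponding incident edge $v_jv_{j+3}$ or its counterpart is a matching). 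Then $v_k$ becomes removable in $(L',M')$, and Lemma~\ref{lem-remove} propagates removability along the remaining four vertices since every vertex in $G_0-v_j$ has degree at most $3$ and is incident to at most one $F$-edge in the residue.

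In the remaining case $|L(v_i)|=5$ for every $i$, I would select the pair $(v_j,v_{j+3})$ joined by the matching edge, and apply Lemma~\ref{lem-2colours} twice to obtain $a\in L(v_j)$ and $b\in L(v_{j+3})-N_M(a)$ such that $|N_M(a)\cap L(w)|\le 1$ for each of the other two neighbours $w$ of $v_j$ and $|N_M(b)\cap L(w')|\le 1$ for the other two neighbours $w'$ of $v_{j+3}$. Deleting $v_j$, $v_{j+3}$, $N_M(a)$ and $N_M(b)$ leaves a $4$-cycle on the remaining four vertices whose lists each have size at least $3$, to which Lemma~\ref{lem-path} applies after one further single-colour reduction at an endpoint of a Hamiltonian path of the cycle.

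The main obstacle is the bookkeeping in this last subcase: one needs a common pair $(a,b)$ with $ab\notin M_{v_jv_{j+3}}$ that simultaneously avoids the two heavy edges at each of $v_j$ and $v_{j+3}$. The counting is tight but workable, because by Lemma~\ref{lem-2colours} at most two nodes of $L(v_j)$ are ``bad'' for each heavy edge at $v_j$ (leaving a valid $a$ since $|L(v_j)|=5$), and once $a$ is fixed the matching edge $v_jv_{j+3}$ forbids at most one further node of $L(v_{j+3})$ while each heavy edge at $v_{j+3}$ again rules out at most two, still leaving an admissible $b$.
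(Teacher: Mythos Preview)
Your approach rests on a structural misconception about $F$. The maximal subdividable set of $K_3\Box K_2$ (Table~\ref{tab-max-sub}) is a Hamiltonian path that uses \emph{all three} rung edges $v_iv_{i+3}$ and only two of the six triangle edges; the edges lying outside $F$ are four triangle edges, not the rungs. Hence your repeated assumption that $M_{v_jv_{j+3}}$ is a matching (and the line ``the matching edge $v_jv_{j+3}$ forbids at most one further node of $L(v_{j+3})$'') is unfounded: each edge $v_jv_{j+3}$ may lie in $F$ and carry a non-matching $M_e$. Even granting the assumption, your Case~2 count gives at most $1+2+2=5$ forbidden nodes in a list of size $5$, so no admissible $b$ is guaranteed. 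In Case~1 you also never say what to do when the only $v_k$ with $|L(v_k)|\le 4$ is not adjacent to your chosen $v_j$, and the blanket claim that every vertex of $G_0-v_j$ meets at most one $F$-edge in the residue is false (e.g.\ with $v_j=v_2$, the vertex $v_4$ still meets two $F$-edges before $v_5$ is removed), so the propagation via Lemma~\ref{lem-remove} needs a specific order, not a one-line appeal.

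The paper's proof is much shorter because it labels along the subdividable Hamiltonian path $v_1v_2\cdots v_6$. The two endpoints $v_1,v_6$ each meet exactly one edge of that path, so Lemma~\ref{obs-2} gives $|L(v_1)|,|L(v_6)|\le 4$ immediately; thus your ``all lists equal $5$'' case is vacuous. One then picks whichever $v_j\in\{v_2,v_3,v_4,v_5\}$ has $|L(v_j)|=5$ (Lemma~\ref{lem-degree-DP}), observes that the non-$F$ edge at $v_j$ joins it to $v_1$ or $v_6$ (say $v_6$), chooses $a\in L(v_j)$ with $N_M(a)\cap L(v_6)=\emptyset$ (possible since $|L(v_6)|\le 4<5$ and $M_{v_jv_6}$ is a matching), and verifies that $(v_6,v_5,v_4,v_3,v_1)$ is a removable sequence in $G_0-v_j$. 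This single observation eliminates your case split, your double-deletion argument, and the non-adjacency gap all at once.
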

\begin{proof}
    Suppose to the contrary that $G_0= K_3 \Box K_2$.

     By Lemma~\ref{lem-sub}, the subdividable set $F$ is a subset of the path $v_1v_2...v_6$ shown in Figure~\ref{fig-A-D}(a). Since $d_{G_0}(v_i)=3$ and $|E_{G_0}(v_i)\cap F|\le 1$ for $i\in\{1, 6\}$, we have $|L(v_i)|\le 4$ by Lemma~\ref{obs-2}.
By Lemma~\ref{lem-degree-DP}, there exists a vertex $v_i$ with $|L(v_i)|=5$ for some $i=\{2, 3, 4, 5\}$. 

Assume $|L(v_2)|=5$. 

Let $a\in L(v_2)$ such that $N_M(a)\cap L(v_6)=\emptyset$.
Let $G'=G_0-v_2$ and $(L', M')= (L,M)|_{G'}-  N_{M}(a)$. Since  $|L'(v_6)|>\sum_{e \in E_{G'}(v_6)}\lambda_{M'_e}(v_6)$,  $(v_6, v_5, v_4, v_3, v_1)$ is a removable sequence with respect to $(L',M')$ by Lemma~\ref{lem-remove}. Thus $G'$ is $(L',M')$-colourable and hence $G_0$ is $(L,M)$-colourable, a contradiction.

Similarly, one can prove that $|L(v_i)|\le4$ for all $i=\{3, 4, 5\}$. We conclude that $G_0 \ne K_3 \Box K_2$.
\end{proof}

\begin{figure} [htbp]
\centering
\subfigure[]{
\begin{minipage} [t]{0.22\linewidth} \centering
\includegraphics [scale=1.3]{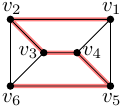} 
\end{minipage}
}
\subfigure[]{
\begin{minipage} [t]{0.22\linewidth} \centering
\includegraphics [scale=1.3]{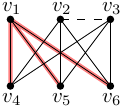} 
\end{minipage}
}
\subfigure[]{
\begin{minipage} [t]{0.22\linewidth} \centering
\includegraphics [scale=1.3]{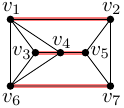} 
\end{minipage}
}
\caption{Maximal subdividable sets of $K_3 \Box K_2$, $K_{3,3}$, $A$ and $D$.} 
    \label{fig-A-D}
\end{figure}
 
\begin{lem}
     \label{lem-case11}
    $G_0 \not\in  \mathcal{G}' $.
 \end{lem}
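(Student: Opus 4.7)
The plan is to dispose of the remaining cases $G_0\in\{K_{3,3},A,A^+,B,B^+,C,C^+,D\}$ one at a time, using exactly the template that has already worked for $W_n$, $K_5^-$, and $K_3\Box K_2$. For each such $G_0$, Lemma~\ref{lem-sub} (together with Table~\ref{tab-max-sub}) narrows $F$ down to finitely many maximal subdividable sets up to automorphism, so we only need to argue for each listed representative $F$. The two structural facts that drive every argument are: Lemma~\ref{obs-2}, which forces $|L(v)|\le d_{G_0}(v)$ at any degree-$3$ vertex incident to at most one edge of $F$; and Lemma~\ref{lem-degree-DP}, which guarantees at least one vertex $v^\star$ with $|L(v^\star)|=5$ and $\sum_{e\in E_{G_0}(v^\star)}\lambda_{M_e}(v^\star)>5$.

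The basic move, which I would apply verbatim in each subcase, is as follows. Select a vertex $v$ with $|L(v)|=5$ (or the largest available list), and using Lemma~\ref{lem-2colours} pick $a\in L(v)$ such that for a carefully chosen neighbour $u$ the loss is minimal: either $N_{M_{uv}}(a)=\emptyset$ when $|L(u)|\le d_{G_0}(u)$, or $|N_{M_{uv}}(a)|<\lambda_{M_{uv}}(u)$ when $|L(u)|<5$. Form $G'=G_0-v$ and $(L',M')=(L,M)|_{G'}-N_M(a)$, at which point $u$ satisfies $|L'(u)|>\sum_{e\in E_{G'}(u)}\lambda_{M'_e}(u)$ by construction. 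Then repeatedly invoke Lemma~\ref{lem-remove} to build a removable sequence starting at $u$ that exhausts $V(G')$, contradicting our choice of counterexample via Lemma~\ref{obs-1}. This strategy is effective whenever $G_0-v$ admits a Hamilton path starting at $u$ in which every edge outside the first one is either not in $F$ or is the only $F$-edge incident to the current vertex, which is exactly what Lemma~\ref{lem-remove} needs.

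When a single deletion does not suffice (typically when only one or two vertices have $|L|=5$, or when $v^\star$ has degree $\ge 4$ so deleting it still leaves an edge where two removals are required), I would fall back on the double-deletion variant used in Case~3 of Lemma~\ref{lem-W_n} and Subcase~1.1 of Lemma~\ref{lem-K5-}: pick $a\in L(v_1^\star)$ and $b\in L(v_2^\star)$ with small neighbourhood overlaps and apart from one another, form $G'=G_0-\{v_1^\star,v_2^\star\}$ with the corresponding deletions, and either run Lemma~\ref{lem-remove} or (when $G'$ is a disjoint union of paths, as happens for $K_{3,3}$, $B$, $B^+$, $D$) apply Lemma~\ref{lem-path} to the resulting cover, checking the required $|L'|\ge\min\{4,\sum \lambda\}$ bounds at interior vertices and $|L'|\ge 2$ bounds at endpoints.

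I expect the main obstacle to be the graphs $B,B^+,C,C^+$, whose maximal subdividable sets (Table~\ref{tab-max-sub}) are less symmetric than those of $A,A^+,D$. Here the degree sequence mixes vertices of degree $3$ and $4$, so Lemma~\ref{obs-2} leaves several mutually compatible possibilities for which vertices carry $|L|=5$, and in a few configurations no single-deletion Hamilton-path ordering exists; one is forced into the two-vertex deletion together with a path-colouring via Lemma~\ref{lem-path}, and one must verify the finer condition that if $|L''|=2$ at an endpoint then $\lambda_{M''_e}=1$ on its incident edge. The bookkeeping is routine but tedious, and I would present it as a short enumeration of the (small number of) non-isomorphic placements of $F$, closing the proof and thereby completing the argument that $G_0\notin\mathcal{W}\cup\mathcal{G}\cup\mathcal{G}'$, contradicting Theorem~\ref{thm-characterization}.
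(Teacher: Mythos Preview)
Your overall template --- colour one vertex with a large list carefully, then use Lemma~\ref{lem-remove} to peel off the rest along a Hamilton path of $G_0-v$ --- is exactly what the paper does for $K_{3,3}$, $A$, and $D$, and those three cases each go through with a single deletion and no appeal to Lemma~\ref{lem-path}.

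However, you have misjudged where the difficulty lies. You write that for $B,B^+,C,C^+$ ``Lemma~\ref{obs-2} leaves several mutually compatible possibilities for which vertices carry $|L|=5$'' and anticipate a tedious case split with double deletions and path colourings. In fact the opposite is true: inspecting the maximal subdividable sets in Table~\ref{tab-max-sub} for each of $A^+,B,B^+,C,C^+$, one sees that every vertex incident with $F$ has degree $3$ and meets $F$ in exactly one edge, while every degree-$4$ vertex is disjoint from $F$. Hence at a degree-$4$ vertex $\sum_e\lambda_{M_e}(v)=4$, and at a degree-$3$ vertex Lemma~\ref{obs-2} forces $\sum_e\lambda_{M_e}(v)\le 4$. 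Thus $|L(v)|\le 4$ at \emph{every} vertex, contradicting Lemma~\ref{lem-degree-DP} outright. These five graphs are disposed of in one sentence, with no colouring argument at all.

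So your plan would eventually work, but it is substantially more laborious than necessary: the cases you flag as the main obstacle are in fact the trivial ones, and the double-deletion/Lemma~\ref{lem-path} machinery is never needed in this lemma.
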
 

\begin{proof}
By Lemmas~[\ref{lem-K5}--\ref{lem-prism}], it suffices to show that $G_0 \notin \{K_{3,3},A,A^+, B, B^+, C, C^+,D\}$.

Assume that $G_0 \in \{K_{3,3}, A\}$. We label the vertices of $G_0$ as Figure~\ref{fig-A-D}(b).

For $i\in \{2,...,6\}$, we have either $d_{G_0}(v_i) = 3$ and $|E_{G_0}(v_i)\cap F|\le 1$, or $d_{G_0}(v_i) = 4$ and $E_{G_0}(v_i)\cap F=\emptyset$. Thus, by Lemma~\ref{obs-2}, we have  $|L(v_i)|\le 4$. It then follows from Lemma~\ref{lem-degree-DP} that $|L(v_1)|=5$.
 
By Lemma~\ref{lem-2colours}, there is a vertex $a\in L(v_1)$ such that $|N_M(a)\cap L(v_4)|<\lambda_{M_{v_1v_4}}(v_4)$. Let $G'=G_0-v_1$ and $(L',M')=(L,M)|_{G'}- N_M(a)$. Since $|L'(v_4)|>|L(v_4)|-\lambda_{M_{v_1v_4}}(v_4)=\sum_{e \in E_{G'}(v_4)}\lambda_{M'_e}(v_4)$, it follows from Lemma~\ref{lem-remove} that $(v_4, v_2,v_3,v_5,v_6)$  is a removable sequence with respect to $(L',M')$. Hence $G_0$ is $(L,M)$-colourable, a contradiction.

Assume that $G_0\in \{A^+, B, B^+, C, C^+\}$. Observe that $d_{G_0}(v)\le 4$ for each vertex $v \in V(G_0)$. Moreover, if $v \in V(G_0)$ is incident with any edges in $F$, then $d_{G_0}(v)=3$ and $|E_{G_0}(v)\cap F|=1$. Thus it follows from Lemma~\ref{obs-2} that for every vertex $v$ of $G_0$, $|L(v)|=\sum_{e\in E_{G_0}(v)}\lambda_{M_e}(v)$, which contradicts to  Lemma~\ref{lem-degree-DP}.

Assume that $G_0=D$. We label the vertices of $G_0$ as Figure \ref{fig-A-D}(c).

For $i=\{2, 3, 5, 7\}$, since $d_{G_0}(v_i)=3$ and $|E_{G_0}(v_i)\cap F|\le  1$, it follows from Lemma~\ref{obs-2} that $|L(v_i)|\le 4$. 
It is clear that if $M_{v_1v_2}$ is a matching, then $|L(v_1)| > |L(v_2)|$. Thus, it follows from Lemma~\ref{lem-2colours} that there is a node $a \in L(v_1)$ such that $|N_M(a)\cap L(v_2)|<\lambda_{M_{v_1v_2}}(v_2)$.

Let $G'=G_0-v_1$ and $(L',M')=(L,M)|_{G'} - N_M(a)$. Since $|L'(v_2)|>\sum_{e \in E_{G'}(v_2)}\lambda_{M'_e}(v_2)$, it follows from Lemma~\ref{lem-remove} and Lemma~\ref{obs-1} that $(v_2, v_5,v_7,v_6,v_3, v_4)$ is a removable sequence with respect to $(L',M')$ and $G'$ is $(L',M')$-colourable. Hence $G_0$ is $(L,M)$-colourable, a contradiction. 
 \end{proof}

\bibliographystyle{abbrv}
\bibliography{Paper}

\begin{thebibliography}{10}

\bibitem{BKP}
A.~Y. Bernshteyn, A.~V. Kostochka, and S.~P. Pron.
\newblock On {DP}-coloring of graphs and multigraphs.
\newblock {\em Sib. Math. J.}, 58:28--36, 2017.

\bibitem{Borodin1}
O.~V. Borodin.
\newblock Criterion of chromaticity of a degree prescription.
\newblock In {\em Abstracts of IV All-Union Conference on Theoretical
  Cybernetics}, pages 127--128. Novosibirsk, 1977.
\newblock In Russian.

\bibitem{Borodin2}
O.~V. Borodin.
\newblock {\em Problems of colouring and of covering the vertex set of a graph
  by induced subgraphs}.
\newblock PhD thesis, Novosibirsk State University, Novosibirsk, 1979.
\newblock In Russian.

\bibitem{DP}
Z.~Dvo\v{r}\'{a}k and L.~Postle.
\newblock Correspondence coloring and its application to list-coloring planar
  graphs without cycles of lengths 4 to 8.
\newblock {\em J. Combin. Theory Ser. B}, 129:38--54, 2018.

\bibitem{EMOT2-16}
M.~N. Ellingham, E.~A. Marshall, K.~Ozeki, and S.~Tsuchiya.
\newblock A characterization of ${K}_{2, 4}$-minor-free graphs.
\newblock {\em SIAM J. Discrete Math.}, 30(2):955--975, 2016.

\bibitem{ERT}
P.~Erd\H{o}s, A.~L. Rubin, and H.~Taylor.
\newblock Choosability in graphs.
\newblock In {\em West Coast Conference on Combinatorics, Graph Theory and
  Computing, Congressus Numerantium XXVI}, pages 125--157, 1979.

\bibitem{Hutchinson}
J.~P. Hutchinson.
\newblock On list-coloring outerplanar graphs.
\newblock {\em J. Graph Theory}, 59(1):59--74, 2008.

\bibitem{KO2019}
S.-J. Kim and K.~Ozeki.
\newblock A note on a {B}rooks' type theorem for {DP}-coloring.
\newblock {\em J. Graph Theory}, 91:148--161, 2019.

\bibitem{Thomassen}
C.~Thomassen.
\newblock Color-critical graphs on a fixed surface.
\newblock {\em J. Combin. Theory Ser. B}, 70(1):67--100, 1997.

\bibitem{Vizing}
V.~G. Vizing.
\newblock Coloring the graph vertices with some prescribed colors.
\newblock {\em Metody Diskretnogo Analiza v Teorii Kodov i Skhem}, 29:3--10,
  1976.
\newblock In Russian.

\bibitem{ZZZ}
H.~Zhou, J.~Zhu, and X.~Zhu.
\newblock Arc-weighted acyclic orientations and variations of degeneracy of
  graphs.
\newblock arXiv:2308.15853, 2023.

\end{thebibliography}


\end{document}